\DeclareMathOperator*{\esssup}{ess\,sup}
\DeclareMathOperator*{\argmax}{arg\,max}
\DeclareMathOperator*{\Var}{Var}
\crefname{equation}{\hspace{-0.4em}}{\hspace{-0.4em}}
\newtheorem{theorem}{Theorem}[section]
\newtheorem{lemma}[theorem]{Lemma}
\newtheorem{remark}[theorem]{Remark}
\newtheorem{definition}[theorem]{Definition}
\newtheorem{assumption}[theorem]{Assumption}
\newtheorem{example}[theorem]{Example}
\renewenvironment{proof}{\noindent {\bf Proof.}}{\hfill $\Box$}
\title{Equilibrium strategies for stochastic control problems with higher-order moments and applications to portfolio selection
\thanks{Jingzhen Liu and Jiaqin Wei are co-corresponding authors.
        This work is supported by the National Natural Science Foundation of China [Grants 12401611, 12571520 and 12471447];
                                  Major Program of the Key Research Institute on Humanities and Social Science of China Ministry of Education [Grant 22JJD790091];
                                  the 111 Project [Grant B17050];
                              and CTBU [Grant 2355010].}}
\date{\vspace{-6ex}}
\author{
    Yike Wang\thanks{School of Finance, Chongqing Technology and Business University, Chongqing 400067, China.}, ~
    Jingzhen Liu\thanks{China Institute of Actuarial Science, Central University of Finance and Economics, Beijing 100081, China.}, ~
    Jiaqin Wei\thanks{Key Laboratory of Advanced Theory and Application in Statistics and Data Science-MOE, School of Statistics, East China Normal University, Shanghai 200062, China.}
}
\begin{document}
\maketitle

\begin{abstract}
In this paper we derive a novel characterization result for time-consistent stochastic control problems with higher-order moments, originally formulated by Wang et al. [SIAM J. Control. Optim., 63 (2025), 1560--1589],
and newly explore many solvable instances including a mean-variance-excess kurtosis portfolio selection problem.
By improving an asymptotic result of the variational process for the uniform boundedness and integrability properties, 
we obtain both the sufficiency and necessity of an equilibrium condition for an open-loop Nash equilibrium control (ONEC).
This condition is simply formulated by the diagonal processes of a flow of backward stochastic differential equations (BSDEs) whose data do not necessarily satisfy the usual square-integrability condition.
In particular, for linear controlled dynamics with deterministic parameters,
we show that the ONEC can be derived by solving a polynomial algebraic equation under a class of nonlinear objective functions.
Interestingly, the mean-variance equilibrium strategy is an ONEC for our general higher-order moment problem if and only if a homogeneity condition holds.
Additionally, in the case with random parameters, we characterize the ONEC by finitely many BSDEs with a recurrence relation. 
As an intuitive illustration, the solution to the mean-variance-skewness problems is given by a quadratic BSDE.
\end{abstract}

\noindent {\bf Keywords:} open-loop Nash equilibrium control, higher-order moment, backward stochastic differential equations, homogeneity condition, mean-variance-skewness problem

\vspace{3mm}

\noindent {\bf AMS2010 classification:} Primary: 93E20, 91G80; Secondary: 91B08, 49N90
\vspace{3mm}

\section{Introduction}
\label{sec: Introduction}

Since \cite{Markowitz-1952} pioneered the mean-variance analysis domain in the 1950s,
the mean maximization and variance minimization problem has attracted much attention in fields such as mathematical finance and control engineering,
and research on the mean-variance efficiency frontier and efficient portfolios has continued in recent years.
Owing to multi-objective convex optimization theory, the process of seeking an efficient portfolio can be reduced to solving a mean-variance utility optimization problem (see \cite[p. 337]{Yong-Zhou-1999}).
Therefore, mean-variance objective functions are becoming increasingly common in studies related to stochastic control and dynamic optimization problems.

On the one hand, to obtain theoretical developments and implement practical applications, an increasing number of characterizations of risks, such as skewness and kurtosis, should be considered. 
This is not only a natural extension of mean-variance analysis, but also flexibly models the asymmetry, heavy tails and outliers of the real-world distribution.
As a comprehensive review, \cite{Mandal-Thakur-2024} presented a detailed systematic literature analysis for the portfolio selection models with higher-order moments,
and collects several reasons to incorporate higher-order moments into the portfolio problem.
For example, \cite{Konno-Shirakawa-Yamazaki-1993,Konno-Suzuki-1995} inferred the importance of the skewness in portfolio optimization models by the third-order approximation of expected utility, 
and proposed different theoretical schemes to obtain a portfolio with a large skewness. 
\cite{Chunhachinda-Dandapani-Hamid-Prakash-1997} also showed by an empirical approach that the skewness in investor preferences significantly affects the construction of the optimal portfolio.
\cite{Maringer-Parpas-2009} incorporated both the skewness and kurtosis into a weighted mean-variance model.
The kurtosis aversion was considered therein, based on the hypothesis that the investors dislike extreme events with a high probability on either side. 
\cite{Kim-Fabozzi-Cheridito-Fox-2014} suggested solving the mean-variance-skewness-kurtosis optimization problem formulated as 
\begin{equation*}
\max_{\text{portfolio}} \text{mean} - \text{variance} + \text{skewness} - \text{kurtosis}.
\end{equation*}
This optimization problem was also considered in \cite{Chen-Zhong-Chen-2020} to examine the data-driven asset selection.
\cite{Liu-Jiang-An-Cheung-2020} conducted an empirical study and found that incorporating the kurtosis in the portfolio problem can lead to the most robust improvement in investment performance. 
Besides, a wide variety of mean-variance-skewness-kurtosis models and methods have been proposed to solve portfolio problems. 
See, e.g., \cite{Joro-Na-2006,Briec-Kerstens-Jokung-2007,Yu-Wang-Lai-2008,Zhai-Bai-Wu-2018,Chen-Zhou-2018,Zhou-Palomar-2021,Zhai-Bai-Hao-2021}.
In addition, provided that all information about the distribution of a contingent claim is included in the higher-order moments,
the higher-order moment problems might have a potential connection with the portfolio problems under some specific criteria, 
e.g., stochastic dominance constraints in \cite{Wang-Xia-2021}, nonlinear law-dependent preferences in \cite{Liang-Xia-Yuan-2025}, quantile maximization in \cite{He-Jiang-Kou-2025} and rank-dependent utilities in \cite{Wei-Xia-Zhao-2025}.

However, unlike the well-developed mean-variance model---which has been deeply explored with stochastic control theory---research on higher-order moment portfolio selection has primarily focused on algorithm design within discrete-time frameworks.
This predicament is admittedly attributable primarily to technical limitations.
For example, \cite{Konno-Suzuki-1995} mentioned that the skewness of a contingent claim is not a concave function and hence it is difficult to solve the static problem through standard computational approaches.
\cite{Boissaux-Schiltz-2010} derived the stochastic maximum principle for a dynamic mean-variance-skewness-kurtosis problem but failed to derive an explicit solution.
In pursuit of concrete breakthroughs in analytical solutions, researchers are shifting their focus from traditional maximization/minimization frameworks to alternative problem formulations.
Recently, \cite{Wang-Liu-Bensoussan-Yiu-Wei-2025} provided a game-theoretic solution to the higher-order problems within a continuous-time stochastic control framework and facilitated two financial applications:
(i) Investigating the higher-order moment problem is a heuristic approach for solving portfolio problems with fairly general nonlinear law-dependent preferences (see Example 4.6 therein).
(ii) Through a higher-moment problem, one can solve a portfolio problem with a fairly general penalty function of the deviation from the expectation of a contingent claim (see Remark 5.10 therein).
In the present paper, we follow the game-theoretic perspective---which is introduced along with time-consistency later---and derive some novel results for the higher-order moment problems.

On the other hand, the presence of variance, as well as other higher-order moments, in the objective function usually implies that the optimization problem is time-inconsistent.
The study of time-inconsistency can also be traced back to the 1950s; see \cite{Strotz-1955} for research on the consumption optimization problem with non-exponential intertemporal utility discounting.
The major negative effect of time-inconsistency is that the decision maker immediately changes their plan to realize a temporary maximum/minimum once the information is refreshed and finally implements a myopic strategy.
For example, one can follow the research of \cite{Zhou-Li-2000} to solve a mean-variance optimization problem with a new initial epoch and then find the difference between the new optimal strategy and the previous one. 
The usual approach for addressing time-inconsistency is to take the game-theoretic perspective and formulate the problem as a game between the incarnations of the decision maker at different time instants;
see, e.g.,  \cite[Section 2.3]{Bjork-Murgoci-Zhou-2014}.
An equilibrium point is treated as a time-consistent solution because any spike deviation from it will not be significantly better.
If a control process (resp. a feedback scheme) is sought, an equilibrium point is called an open-loop (resp. closed-loop) Nash equilibrium control.
Related works include \cite{Yong-2011,Yong-2012,Bjork-Murgoci-2014,Bjork-Murgoci-Zhou-2014,Bjork-Khapko-Murgoci-2017,Wei-2017,Wang-Zheng-2021} for closed-loop Nash equilibrium control (CNEC),
\cite{Hu-Jin-Zhou-2012,Hu-Jin-Zhou-2017,Sun-Guo-2019,Alia-2019,Wang-2019,Alia-2020} for open-loop Nash equilibrium control (ONEC) and \cite{Yong-2017,Wang-Liu-Bensoussan-Yiu-Wei-2025} for both types.

As a well-known result, for a classic mean-variance portfolio problem with deterministic parameters, the ONEC and CNEC are the same deterministic function; see also \cite{Hu-Jin-Zhou-2012,Bjork-Khapko-Murgoci-2017}.
The recent work \cite{Wang-Liu-Bensoussan-Yiu-Wei-2025} extends this result to a class of higher-order moment problems,
and shows that a deterministic function is a time-consistent solution under certain conditions, although it is difficult to find the maximizer for the same objective function. 
However, those conditions are so strong.
For example, the $\mathbb{L}^{ 2n-2 + \epsilon }$-integrability of the terminal state value was proposed, but the objective function only includes the moments up to the $n$-th order.
This technical condition is crucial for ensuring the square-integrability of the backward stochastic differential equations (BSDEs) which serve as adjoint equations in the maximum principle, yet it narrows the scope of admissible strategies.
In fact, it is $\mathbb{L}^{n}$-integrability that is an eligible requirement for a financial problem with the higher-order moments up to the $n$-th order.
Additionally, global Lipschitz continuity prohibits 
some nonlinear terms of higher-order moments, such as skewness and kurtosis formulated by standardized moments, from being included in the objective function.
This limits the potential application scope of this study.
Furthermore, for open-loop equilibria, \cite{Wang-Liu-Bensoussan-Yiu-Wei-2025} only shows the sufficiency of a complicated equilibrium condition, 
which contains the limit of the ratio of the conditional expectation of an integral to the variational factor.
In view of \cite{Hu-Jin-Zhou-2017} and \cite{Wang-2019}, the equilibrium condition is expected to be simplified by using the diagonal processes generated by a flow of BSDEs indexed by different initial epochs. 
The simplified condition may facilitate the application of our results. 

In this paper, we characterize and seek an ONEC for the objective function formulated by the sum of the classic mean-variance utility and a general function of higher-order central moments.
Considering the above mentioned improvements needed for financial practice, we make the corresponding contributions as follows.

\emph{First}, we relax the global Lipschitz continuity condition in the prior research to a local Lipschitz continuity condition.
Moreover, as the objective function includes the moments up to the $2n$-th order, we merely require the ``$\mathbb{L}^{2n}$-integrability'' of the terminal state value.
As a consequence, the terminal value, i.e., the data, in each adjoint BSDE is merely $\mathbb{L}^{ 1 + \delta }$-integrable for a small $\delta > 0$.
We strengthen \cite[Lemma 4.1]{Wang-Liu-Bensoussan-Yiu-Wei-2025} to \Cref{lem: actuarial estimate} with a weaker integrability condition and a stronger uniform estimate in the present paper, which serves the perturbation argument.

\emph{Second}, by exploiting the theory of $\mathbb{L}^{p}$ solutions for BSDEs (see \cite{Briand-Delyon-Hu-Pardoux-Stoica-2003,ElKaroui-Peng-Quenez-1997,Chen-2010}),
we show in \Cref{lem: integrabiity and uniqueness of diagonal process} that the diagonal process is also ``$\mathbb{L}^{ 1 + \delta }$-integrable''
and then demonstrate in \Cref{thm: maximum principle} that the equilibrium condition, which is explicitly and briefly expressed by the exact value of the diagonal process triplet, is sufficient and necessary for attaining open-loop equilibria.
Notably, distinct from \cite{Hu-Jin-Zhou-2017}, a stochastic version of the Lebesgue differentiation theorem (see Lemma 3.4 therein) is unnecessary in our approach.
We still present a stochastic Lebesgue differentiation theorem in \Cref{app: Stochastic Lebesgue differentiation theorem} to satisfy the ``$\mathbb{L}^{ 1 + \delta }$-integrability'' condition that is weaker than \cite[Lemma 3.4]{Hu-Jin-Zhou-2017}. 
This extension provides not only an alternative approach for deriving the necessity of the equilibrium condition for linear controlled stochastic differential equations (SDEs),
but may also be useful for various stochastic control problems without adequate integrability conditions. 

\emph{Third}, to illustrate specific instances and to facilitate applying our results to financial problems, we consider linear controlled SDEs and derive the corresponding ONEC.
Given the deterministic linearity coefficients, we find that the ONEC can be obtained by solving a polynomial algebraic equation under a class of nonlinear objective functions 
(see \Cref{rem: method of polynomial algebraic equation} and \Cref{ex: method of polynomial algebraic equation}), e.g., by heuristically formulating,
\begin{equation*}
\text{objective function} = \text{mean} - \text{variance} - \text{excess kurtosis} \times | \text{variance} |^{2}.
\end{equation*}
In particular, we show in \Cref{thm: MV solution is ONEC} that the homogeneity condition \cref{eq: homogeneity condition :eq} is sufficient and necessary for this coincidence for our general higher-order moment problems.
This results extends the implication that the mean-variance equilibrium strategy is also an ONEC for the mean-variance-standardized moments of objective functions (see \cite[Example 5.7 and Theorem 5.9]{Wang-Liu-Bensoussan-Yiu-Wei-2025}).

\emph{Finally}, we consider linear controlled SDEs with some random coefficients, in which case it is almost impossible to derive an explicit solution.
We derive through backward stochastic partial differential equations (BSPDEs) and characterize the ONEC by a sequence of BSDEs with a recurrence relation.
Afterward, we find that the ONEC for mean-variance-skewness problems satisfying a certain condition can be derived by solving a quadratic BSDE.
Then, we have the existence and uniqueness of the solution. See \Cref{ex: BSDEs characterization}.

The rest of this paper is organized as follows.
In \Cref{sec: Problem formulation}, we formulate our control problem.
In \Cref{sec: Mathematical preliminaries}, we provide some mathematical preliminaries, including the perturbation argument and the diagonal processes generated by a flow of BSDEs, for characterizing open-loop equilibria.
In \Cref{sec: Sufficient and necessary condition for ONECs}, we show the sufficiency and necessity of the equilibrium condition for open-loop equilibria.
In \Cref{sec: Closed-form solution}, we consider linear controlled SDEs to illustrate several applications to portfolio selection, and then derive the closed-form expression of an ONEC or provide its characterization.
This illustrates many applications to portfolio selection problems.
To end this paper, we present some concluding remarks in \Cref{sec: Concluding remark}.

\section{Problem formulation}
\label{sec: Problem formulation}

Let $T$ be a fixed finite time horizon and $( \Omega, \mathcal{F}, \mathbb{F}, \mathbb{P} )$ be a filtered probability space that satisfies the usual hypotheses, 
where $\mathbb{F} := \{ \mathcal{F}_{t} \}_{ t \in [ 0,T ] }$ is generated by the one-dimensional standard Brownian motion $\{ {W}_{t} \}_{ t \in [ 0,T ] }$. Denote by
$\mathbb{E}$ the expectation operator and $\mathbb{E}_{t} [ \cdot ] := \mathbb{E} [ \cdot | \mathcal{F}_{t} ]$ the expectation conditioned on $\mathcal{F}_{t}$.
For $p,q > 1$, we introduce the following space notation. 
\begin{itemize}
\item $\mathbb{L}_{ \mathcal{F}_{t} }^{p} ( \Omega )$ denotes the set of all $\mathcal{F}_{t}$-measurable random variables $f: \Omega \to \mathbb{R}$ with $\mathbb{E} [ | f |^{p} ] < \infty $.
\item $\mathbb{L}_{\mathbb{F}}^{p} ( 0,T; \mathbb{L}^{q} ( \Omega ) )$ denotes the set of all $\mathbb{F}$-progressively measurable processes $f: [ 0,T ] \times \Omega \to \mathbb{R}$ with
      $\mathbb{E} [ ( \int_{0}^{T} | f ( s, \cdot ) |^{p} ds )^{ \frac{q}{p} } ] < \infty $.
\item ${C}_{\mathbb{F}} ( 0,T; \mathbb{L}^{p} ( \Omega ) )$ denotes the set of all $\mathbb{F}$-progressively measurable and $\mathbb{P}$-a.s. sample-continuous processes $f: [ 0,T ] \times \Omega \to \mathbb{R}$ with
      $\mathbb{E} [ \sup_{ s \in [ 0,T ] } | f ( s, \cdot ) |^{p} ] < \infty $.
\item $\mathbb{L}_{\mathbb{F},loc}^{p} ( 0,T; \mathbb{L}^{q} ( \Omega ) )$ denotes the set of all $\mathbb{F}$-progressively-measurable processes $f: [ 0,T ) \times \Omega \to \mathbb{R}$
      with $\mathbb{E} [ ( \int_{0}^{\tau } | f ( s, \cdot ) |^{p} ds )^{ \frac{q}{p} } ] < \infty$ for any fixed $\tau \in ( 0,T )$.
\item $\mathbb{L}^{2} ( 0,T ) \cap \mathbb{L}^{\infty }_{loc} ( 0,T )$ denotes the set of all deterministic measurable functions $f: [ 0,T ] \to \mathbb{R}$ 
      such that $\int_{0}^{T} | {f}_{t} |^{2} dt < \infty $ and $\esssup_{ s \in [ 0, \tau ] } | {f}_{s} | < \infty $ for any fixed $\tau \in ( 0,T )$.
\end{itemize}

Hereafter, we suppress the statement of the argument $\omega $ as usual.
In addition, for our control problem with high-order moments up to the $2n$-th order,
we let ${\alpha }_{0} (y) := 1$, ${\alpha }_{2j-1} (y) := 0$ and ${\alpha }_{2j} (y) := (2j-1)!! {y}^{j}$ for all positive integers $j$,
where $(2j-1)!!$ is the double factorial of $2j-1$, and we write $\vec{\alpha } (y) := \big( {\alpha }_{2} (y), {\alpha }_{3} (y), \ldots, {\alpha }_{2n} (y) \big)$ for the sake of brevity.
Notably, to mitigate misunderstandings, we strongly suggest that readers keep in mind that the argument ${z}_{j}$ in the sequel corresponds to ${\alpha }_{j} (y)$ and the $j$-th central moment unless otherwise mentioned.

In this paper, we shall solve various mean-variance-skewness-kurtosis portfolio selection problem with the preferences including
\begin{itemize}
\item $\mathbb{E} [ {X}^{u}_{T} ] 
       - \frac{\gamma }{2} \mathbb{E} [ ( {X}^{u}_{T} - \mathbb{E} [ {X}^{u}_{T} ] )^{2} ]
       - {\varphi }_{3} \mathbb{E} [ ( {X}^{u}_{T} - \mathbb{E} [ {X}^{u}_{T} ] )^{3} ]
       - {\varphi }_{4} \mathbb{E} [ ( {X}^{u}_{T} - \mathbb{E} [ {X}^{u}_{T} ] )^{4} ]$,
\item $\mathbb{E} [ {X}^{u}_{T} ] - \frac{\gamma }{2} \mathbb{E} [ ( {X}^{u}_{T} - \mathbb{E} [ {X}^{u}_{T} ] )^{2} ]
       - {\varphi }_{3} \frac{ \mathbb{E} [ ( {X}^{u}_{T} - \mathbb{E} [ {X}^{u}_{T} ] )^{3} ] }{ | \mathbb{E} [ ( {X}^{u}_{T} - \mathbb{E} [ {X}^{u}_{T} ] )^{2} ] |^{\frac{3}{2}} }
       - {\varphi }_{4} \frac{ \mathbb{E} [ ( {X}^{u}_{T} - \mathbb{E} [ {X}^{u}_{T} ] )^{4} ] }{ | \mathbb{E} [ ( {X}^{u}_{T} - \mathbb{E} [ {X}^{u}_{T} ] )^{2} ] |^{2} }$,
\item $\mathbb{E} [ {X}^{u}_{T} ] - \frac{\gamma }{2} \mathbb{E} [ ( {X}^{u}_{T} - \mathbb{E} [ {X}^{u}_{T} ] )^{2} ]
       - {\varphi }_{4} | \mathbb{E} [ ( {X}^{u}_{T} - \mathbb{E} [ {X}^{u}_{T} ] )^{2} ] |^{2} \times
         \big( \frac{ \mathbb{E} [ ( {X}^{u}_{T} - \mathbb{E} [ {X}^{u}_{T} ] )^{4} ] }{ | \mathbb{E} [ ( {X}^{u}_{T} - \mathbb{E} [ {X}^{u}_{T} ] )^{2} ] |^{2} } - 3 \big)$,
\end{itemize}
where ${X}^{u}$ represents the wealth process corresponding to the investment amount process $u$. See \Cref{sec: Closed-form solution}.

For generality, we consider the following SDE for the state-control pair $( X,u )$:
\begin{equation}\label{eq: general controlled SDE :eq}
d {X}_{t} = b ( t, {X}_{t}, {u}_{t} ) dt + \sigma ( t, {X}_{t}, {u}_{t} ) d {W}_{t}, \quad \forall t \in [ 0,T ], \quad
  {X}_{0} = {x}_{0} ~ (\text{given a priori}),
\end{equation}
where $b, \sigma : [ 0,T ] \times \Omega \times \mathbb{R} \times \mathbb{R} \to \mathbb{R}$ are $\mathbb{F}$-progressively measurable in the first two arguments $( t, \omega )$ and uniformly Lipschitz continuous in the last two arguments $( x,u )$, 
i.e., there exists a constant $K > 0$ such that for all $t \in [ 0,T ]$ and $x,y,u,v \in \mathbb{R}$,
\begin{equation}\label{eq: Lipschitz continuity :eq}
| b ( t,x,u ) - b ( t,y,v ) | + | \sigma ( t,x,u ) - \sigma ( t,y,v ) | \le K ( | x - y | + | u - v | ), \quad \mathbb{P}\text{-}a.s.,
\end{equation}
and the $\mathbb{F}$-adapted process $u: [ 0,T ] \times \Omega \to \bar{\mathbb{R}}$ satisfies $\mathbb{E} [ ( \int_{0}^{T} | b ( t, 0, {u}_{t} ) | dt )^{2} + \int_{0}^{T} | \sigma ( t, 0, {u}_{t} ) |^{2} dt ] < \infty$.
In fact, by using the Picard iteration method as in the proof of \cite[Theorem 5.2.1]{Oksendal-2003}, 
one can conclude that \cref{eq: linear controlled SDE :eq} permits the unique solution denoted by ${X}^{u}$ in ${C}_{\mathbb{F}} ( 0,T; \mathbb{L}^{2} ( \Omega ) )$,
where the uniqueness is up to indistinguishability according to \cite[Lemma 3.2.10]{Cohen-Elliott-2015}.
In addition, we assume that $b ( t,x,u )$ and $\sigma ( t,x,u )$ are twice continuously differentiable in $x$ and that the partial derivatives $( {b}_{xx}, {\sigma }_{xx} )$ are uniformly bounded for all $( t, \omega, u )$. 

In terms of the objective function, for generality, we consider
\begin{align*}
J (u) & := \mathbb{E} [ {X}^{u}_{T} ] - \frac{\gamma }{2} \mathbb{E} \big[ ( {X}^{u}_{T} - \mathbb{E} [ {X}^{u}_{T} ] )^{2} \big] \\
      &~\quad - \varphi \big( \mathbb{E} \big[ ( {X}^{u}_{T} - \mathbb{E} [ {X}^{u}_{T} ] )^{2} \big], \mathbb{E} \big[ ( {X}^{u}_{T} - \mathbb{E} [ {X}^{u}_{T} ] )^{3} \big], 
                              \ldots, \mathbb{E} \big[ ( {X}^{u}_{T} - \mathbb{E} [ {X}^{u}_{T} ] )^{2n} \big] \big),
\end{align*}
which is a sum of the classic mean-variance utility ${J}_{MV} (u) = \mathbb{E} [ {X}^{u}_{T} ] - \frac{\gamma }{2} \Var [ {X}^{u}_{T} ]$ with $\gamma \in \mathbb{R}_{+}$
and a general evaluation for the higher-order central moments $\{ \mathbb{E} [ ( {X}^{u}_{T} - \mathbb{E} [ {X}^{u}_{T} ] )^{j} ] \}_{ j = 2,3,\ldots,2n }$.
This objective function acts as the total utility of an economic agent and is expected to be maximized only based on the initial information.
Hereafter, $\varphi ( {z}_{2}, {z}_{3}, \ldots, {z}_{2n} )$ is a continuously differentiable function that is Lipschitz continuous on every compact subset that does not contain any point with zero coordinate components.
In other words, $\varphi $ is allowed to be undefined or ill posed at points $( \ldots, 0, \ldots )$.
To evaluate a control $u$ for any initial epoch $t$,
we let ${J}^{t} (u) := - \infty $ if one of $( \varphi, {\varphi }_{2}, \ldots, {\varphi }_{2n} )$, where ${\varphi }_{j} := \frac{\partial \varphi }{\partial {z}_{j} }$, is not well defined at the point 
$( \mathbb{E}_{t} [ ( {X}^{u}_{T} - \mathbb{E}_{t} [ {X}^{u}_{T} ] )^{2} ], \ldots, \mathbb{E}_{t} [ ( {X}^{u}_{T} - \mathbb{E}_{t} [ {X}^{u}_{T} ] )^{2n} ] )$;
otherwise, we introduce the following objective function:
\begin{align}\label{eq: objective function :eq}
{J}^{t} (u) & := \mathbb{E}_{t} [ {X}^{u}_{T} ] - \frac{\gamma }{2} \mathbb{E}_{t} \big[ ( {X}^{u}_{T} - \mathbb{E}_{t} [ {X}^{u}_{T} ] )^{2} \big] \\
            &~\quad - \varphi \big( \mathbb{E}_{t} \big[ ( {X}^{u}_{T} - \mathbb{E}_{t} [ {X}^{u}_{T} ] )^{2} \big], \mathbb{E}_{t} \big[ ( {X}^{u}_{T} - \mathbb{E}_{t} [ {X}^{u}_{T} ] )^{3} \big],
                                    \ldots, \mathbb{E}_{t} \big[ ( {X}^{u}_{T} - \mathbb{E}_{t} [ {X}^{u}_{T} ] )^{2n} \big] \big), \notag
\end{align}
which acts as the agent's utility based on all the information up to the epoch $t$.
Notably, in this paper, we do not consider CNECs presented in, e.g., \cite{Bjork-Khapko-Murgoci-2017}, 
because it is not necessarily possible to define the equilibrium value function and the extended Hamiltonian-Jacobi-Bellman equation to fit the terminal value ${J}^{T} (u) = {X}^{u}_{T} - \varphi ( 0, \ldots, 0 )$ under our relaxed assumptions for $\varphi $.

Let $\mathcal{U}$ be the set of all $\mathbb{F}$-adapted control processes $u$ with ${X}^{u}_{T} \in \mathbb{L}_{ \mathcal{F}_{T} }^{2n} ( \Omega )$ and ${J}^{t} (u) > - \infty $ for all $t \in [ 0,T )$.
Notably, the integrability condition for $u \in \mathcal{U}$ is much weaker than that for ${X}^{u}_{T} \in \mathbb{L}_{ \mathcal{F}_{T} }^{ 4n-2 + \epsilon } ( \Omega )$ with some $\epsilon > 0$, as required in \cite{Wang-Liu-Bensoussan-Yiu-Wei-2025},
and this improvement indeed exploits \Cref{lem: actuarial estimate,lem: integrabiity and uniqueness of diagonal process}, which are the key results for characterizing the ONEC and are innovative relative to the moment estimates in the literature. 

As mentioned in \Cref{sec: Introduction}, the solutions for the original maximization problems formulated by
$\sup_{ u \in \mathcal{U} } {J}^{t} (u)$
with different starting time instants $t$ are inconsistent, unless $\varphi = - \frac{\gamma }{2} {z}_{2} + K$ for some constant $K$.
To address the time-inconsistency, we refer to \cite{Hu-Jin-Zhou-2012,Hu-Jin-Zhou-2017} and investigate the ONECs defined below.
Intuitively speaking, we aim to seek a control $\bar{u}$ that is not significantly worse than its spike variation $\bar{u}^{ t, \varepsilon, \zeta }$ given by $\bar{u}^{ t, \varepsilon, \zeta }_{s} = \bar{u}_{s} + \zeta {1}_{\{ s \in [ t, t + \varepsilon ) \}}$
in the sense of first-order derivative expansion, i.e., 
\begin{equation*}
{J}^{t} ( \bar{u} ) \ge {J}^{t} ( \bar{u}^{ t, \varepsilon, \zeta } ) + o ( \varepsilon ).
\end{equation*}
provided that
\begin{equation*}
\bigcap_{ t \in [ 0,T ) } \argmax_{ u \in \mathcal{U} } {J}^{t} (u) = \varnothing \quad and \quad 
\lim_{ \varepsilon \downarrow 0 } \big( {J}^{t} ( \bar{u} ) - {J}^{t} ( \bar{u}^{ t, \varepsilon, \zeta } ) \big) = 0.
\end{equation*}
Notably, for any $\bar{u} \in \mathcal{U}$, $t \in [ 0,T )$, $\varepsilon \in ( 0, T - t ] $ and $\zeta \in \mathbb{L}_{ \mathcal{F}_{t} }^{2n} ( \Omega )$, 
the spike variation $\bar{u}^{ t, \varepsilon, \zeta }$ produces ${X}^{ \bar{u}^{ t, \varepsilon, \zeta } }_{T} \in \mathbb{L}_{ \mathcal{F}_{T} }^{2n} ( \Omega )$ under the Lipschitz continuity condition \cref{eq: Lipschitz continuity :eq}.

\begin{definition}\label{def: open-loop Nash equilibrium control}
$\bar{u} \in \mathcal{U}$ is an ONEC, if
\begin{equation}\label{eq: open-loop Nash equilibrium control :eq}
0 \le \liminf_{ \varepsilon \downarrow 0 }
      \frac{ {J}^{t} ( \bar{u} ) - {J}^{t} ( \bar{u}^{ t, \varepsilon, \zeta } ) }{\varepsilon }, \quad \forall \zeta \in \mathbb{L}_{ \mathcal{F}_{t} }^{2n} ( \Omega ), ~ \mathbb{P}\text{-}a.s., ~ a.e. ~ t \in [ 0,T ).
\end{equation}
\end{definition}

\begin{remark}\label{rem: ill-posed cases}
The cases in which one of $( \varphi, {\varphi }_{2}, \ldots, {\varphi }_{2n} )$ is not well defined at the point
$( \mathbb{E}_{t} [ ( {X}^{u}_{T} - \mathbb{E}_{t} [ {X}^{u}_{T} ] )^{2} ], \ldots, \mathbb{E}_{t} [ ( {X}^{u}_{T} - \mathbb{E}_{t} [ {X}^{u}_{T} ] )^{2n} ] )$
are various and complicated.
For example, the mean-variance-standardized moment objective function
\begin{equation*}
{J}_{MVSM}^{t} (u) := \mathbb{E}_{t} [ {X}^{u}_{T} ] - \frac{\gamma }{2} \mathbb{E}_{t} \big[ ( {X}^{u}_{T} - \mathbb{E} [ {X}^{u}_{T} ] )^{2} \big]
                    - \sum_{j=3}^{n} {\gamma }_{j} \frac{ \mathbb{E}_{t} \big[ ( {X}^{u}_{T} - \mathbb{E}_{t} [ {X}^{u}_{T} ] )^{j} \big] }
                                                        { \big( \mathbb{E}_{t} \big[ ( {X}^{u}_{T} - \mathbb{E}_{t} [ {X}^{u}_{T} ] )^{2} \big] \big)^{ \frac{j}{2} } }
\end{equation*}
cannot be defined for $\mathbb{E}_{t} [ ( {X}^{u}_{T} - \mathbb{E}_{t} [ {X}^{u}_{T} ] )^{2} ] = 0$,
but \cite[Example 5.7 and Theorem 5.9]{Wang-Liu-Bensoussan-Yiu-Wei-2025} implies that this control problem has the same ONEC and CNEC as those of the mean-variance problem, provided that the controlled SDE is linear.
On the other hand, corresponding to the mean-variance-standard deviation objective function, i.e.,
\begin{equation*}
{J}_{MVSD}^{t} (u) := \mathbb{E}_{t} [ {X}^{u}_{T} ] - \frac{\gamma }{2} \mathbb{E}_{t} \big[ ( {X}^{u}_{T} - \mathbb{E}_{t} [ {X}^{u}_{T} ] )^{2} \big] 
                    - \big( \mathbb{E}_{t} \big[ ( {X}^{u}_{T} - \mathbb{E}_{t} [ {X}^{u}_{T} ] )^{2} \big] \big)^{ \frac{1}{2} },
\end{equation*}
with a slight abuse of notation, 
one obtains ${Y}^{ t, \bar{u} }_{T} = 1 - 0 \times \infty $ and $| {Z}^{ t, \bar{u} }_{T} | = \infty $ for the BSDEs \cref{eq: BSDEs :eq} with $\mathbb{E}_{t} [ ( {X}^{ \bar{u} }_{T} - \mathbb{E}_{t} [ {X}^{ \bar{u} }_{T} ] )^{2} ] = 0$.
However, under the linear SDE \cref{eq: linear controlled SDE :eq} with deterministic parameters, the control $\bar{u}$ given by ${I}_{t} {X}^{\bar{u}}_{t} + {D}_{t} \bar{u}_{t} + {F}_{t} = 0$, 
implying that ${X}^{ \bar{u} }_{T} = \mathbb{E}_{t} [ {X}^{ \bar{u} }_{T} ]$ up to modification on $[ 0,T ] \times \Omega $, produces
\begin{align*}
{J}^{t} ( \bar{u}^{ t, \varepsilon, \zeta } ) - {J}^{t} ( \bar{u} )
& = \zeta \int_{t}^{ t + \varepsilon } {e}^{ \int_{s}^{T} ( {A}_{v} - \frac{ {B}_{v} }{ {D}_{v} } {I}_{v} ) dv } {B}_{s} ds
  - \frac{\gamma }{2} {\zeta }^{2} \int_{t}^{ t + \varepsilon } {e}^{ 2 \int_{s}^{T} ( {A}_{v} - \frac{ {B}_{v} }{ {D}_{v} } {I}_{v} ) dv } | {D}_{s} |^{2} ds \\
& \quad - | \zeta | \bigg( \int_{t}^{ t + \varepsilon } {e}^{ 2 \int_{s}^{T} ( {A}_{v} - \frac{ {B}_{v} }{ {D}_{v} } {I}_{v} ) dv } | {D}_{s} |^{2} ds \bigg)^{ \frac{1}{2} } \\
& \le - | \zeta | \iota {e}^{ - T ( \sup |A| + \frac{1}{\iota } \sup |B| \sup |I| ) } {\varepsilon }^{ \frac{1}{2} } + O ( \varepsilon ).
\end{align*}
This results in $\frac{1}{\varepsilon } ( {J}^{t} ( \bar{u} ) - {J}^{t} ( \bar{u}^{ t, \varepsilon, \zeta } ) ) \to + \infty $ as $\varepsilon \downarrow 0$ for any $\zeta \ne 0$.
In other words, the given $\bar{u} \notin \mathcal{U}$ could also be an ONEC.
In fact, given \cref{eq: linear controlled SDE :eq}, once $| \varphi ( \vec{\alpha } (0) ) | < \infty $, one obtains
\begin{align*}
  {J}^{t} ( \bar{u}^{ t, \varepsilon, \zeta } ) - {J}^{t} ( \bar{u} )
& = \zeta \int_{t}^{ t + \varepsilon } {e}^{ \int_{s}^{T} ( {A}_{v} - \frac{ {B}_{v} }{ {D}_{v} } {I}_{v} ) dv } {B}_{s} ds 
  - \frac{\gamma }{2} {\zeta }^{2} \int_{t}^{ t + \varepsilon } {e}^{ 2 \int_{s}^{T} ( {A}_{v} - \frac{ {B}_{v} }{ {D}_{v} } {I}_{v} ) dv } | {D}_{s} |^{2} ds \\
& \quad + \varphi \big( \vec{\alpha } (0) \big) - \varphi \bigg( \vec{\alpha } \Big( {\zeta }^{2} \int_{t}^{ t + \varepsilon } {e}^{ 2 \int_{s}^{T} ( {A}_{v} - \frac{ {B}_{v} }{ {D}_{v} } {I}_{v} ) dv } | {D}_{s} |^{2} ds \Big) \bigg)
\end{align*}
for any $t \in [ 0,T )$ such that ${I}_{s} {X}^{\bar{u}}_{s} + {D}_{s} \bar{u}_{s} + {F}_{s} = 0$ $\mathbb{P}$-a.s. for a.e. $s \in [ t,T ]$.
Given that the fairly general $\varphi $ might be ill-posed around the origin, one cannot obtain a universal asymptotic estimate for ${J}^{t} ( \bar{u}^{ t, \varepsilon, \zeta } ) - {J}^{t} ( \bar{u} )$ akin to \Cref{lem: actuarial estimate}.
\end{remark}

\section{Mathematical preliminaries}
\label{sec: Mathematical preliminaries}

\subsection{Perturbation argument}

Suppose that $\bar{u} \in \mathcal{U}$ is fixed.
For notational simplicity, we write that 
\begin{align*}
& f (s) := f ( s, {X}^{ \bar{u} }_{s}, \bar{u}_{s} ), \quad {\delta }^{\zeta } f (s) := f ( s, {X}^{ \bar{u} }_{s}, \bar{u}_{s} + \zeta ) - f (s), \quad \forall f = b, {b}_{x}, {b}_{xx}, \sigma, {\sigma }_{x}, {\sigma }_{xx}; \\
& {f}^{t,u} := f \Big( \mathbb{E}_{t} \big[ ( {X}^{u}_{T} - \mathbb{E}_{t} [ {X}^{u}_{T} ] )^{2} \big], \ldots, 
                       \mathbb{E}_{t} \big[ ( {X}^{u}_{T} - \mathbb{E}_{t} [ {X}^{u}_{T} ] )^{2n} \big] \Big), \quad \forall f = \varphi, {\varphi }_{j}.
\end{align*} 
Then, for the following first-order and second-order variational equations,
\begin{equation}\label{eq: variational equation :eq}
\left\{ \begin{aligned}
d {y}^{ t, \varepsilon, \zeta }_{s} & = {b}_{x} (s) {y}^{ t, \varepsilon, \zeta }_{s} ds
                                      + \big( {\sigma }_{x} (s) {y}^{ t, \varepsilon, \zeta }_{s} + {1}_{\{ s \in [ t, t + \varepsilon ) \}} {\delta }^{\zeta } \sigma (s) \big) d {W}_{s}, \quad \forall s \in [ t,T ], \\
  {y}^{ t, \varepsilon, \zeta }_{t} & = 0; \\
d {z}^{ t, \varepsilon, \zeta }_{s} & = \Big( {b}_{x} (s) {z}^{ t, \varepsilon, \zeta }_{s}
                                      + \frac{1}{2} {b}_{xx} (s) ( {y}^{ t, \varepsilon, \zeta }_{s} )^{2}
                                      + {1}_{\{ s \in [ t, t + \varepsilon ) \}} {\delta }^{\zeta } b (s) \Big) ds \\
                                    & \quad + \Big( {\sigma }_{x} (s) {z}^{ t, \varepsilon, \zeta }_{s}
                                                  + \frac{1}{2} {\sigma }_{xx} (s) ( {y}^{ t, \varepsilon, \zeta }_{s} )^{2}
                                                  + {1}_{\{ s \in [ t, t + \varepsilon ) \}} {\delta }^{\zeta } {\sigma }_{x} (s) {y}^{ t, \varepsilon, \zeta }_{s} \Big) d {W}_{s}, \quad \forall s \in [ t,T ], \\
  {z}^{ t, \varepsilon, \zeta }_{t} & = 0,
\end{aligned} \right.
\end{equation}
one can conclude that for any $k \ge 1$, there exists a $( t, \zeta )$-independent constant $K \ge 0$ such that
\begin{equation*}
\left\{ \begin{aligned}
& \limsup_{ \varepsilon \downarrow 0 } \frac{1}{ {\varepsilon }^{k} } 
  \mathbb{E}_{t} \bigg[ \sup_{ s \in [ t,T ] } | {y}^{ t, \varepsilon, \zeta }_{s} |^{2k} \bigg] \le K | \zeta |^{2k}, \quad
  \limsup_{ \varepsilon \downarrow 0 } \frac{1}{ {\varepsilon }^{2k} } 
  \mathbb{E}_{t} \bigg[ \sup_{ s \in [ t,T ] } | {z}^{ t, \varepsilon, \zeta }_{s} |^{2k} \bigg] \le K | \zeta \vee 1 |^{4k}, \\
& \limsup_{ \varepsilon \downarrow 0 } \frac{1}{ {\varepsilon }^{2k} } 
  \mathbb{E}_{t} \bigg[ \sup_{ s \in [ t,T ] } | {X}^{ \bar{u}^{ t, \varepsilon, \zeta } }_{s} - {X}^{ \bar{u} }_{s} - {y}^{ t, \varepsilon, \zeta }_{s} - {z}^{ t, \varepsilon, \zeta }_{s} |^{2k} \bigg] = 0,
\end{aligned} \right.
\end{equation*}
$\mathbb{P}$-a.s. for all $t \in [ 0,T )$ and $\zeta \in \mathbb{L}_{ \mathcal{F}_{t} }^{2n} ( \Omega )$.
For brevity, using Landau symbols, one usually writes
\begin{equation}
\left\{ \begin{aligned}
& \mathbb{E}_{t} \bigg[ \sup_{ s \in [ t,T ] } | {y}^{ t, \varepsilon, \zeta }_{s} |^{2k} \bigg] = O ( {\varepsilon }^{k} ), \quad
  \mathbb{E}_{t} \bigg[ \sup_{ s \in [ t,T ] } | {z}^{ t, \varepsilon, \zeta }_{s} |^{2k} \bigg] = O ( {\varepsilon }^{2k} ), \\
& \mathbb{E}_{t} \bigg[ \sup_{ s \in [ t,T ] } | {X}^{ \bar{u}^{ t, \varepsilon, \zeta } }_{s} - {X}^{ \bar{u} }_{s} - {y}^{ t, \varepsilon, \zeta }_{s} - {z}^{ t, \varepsilon, \zeta }_{s} |^{2k} \bigg] = o ( {\varepsilon }^{2k} ).
\end{aligned} \right.
\label{eq:moment estimates :eq}
\end{equation}

In comparison with the classic moment estimate results reported in \cite[Theorem 3.4.4]{Yong-Zhou-1999}, \cref{eq:moment estimates :eq} is slightly stronger, 
because the supremum is taken in the conditional expectation and $K$ is a constant fixed a priori.
Interested readers can refer to \Cref{app: Standard moment estimate results} for a detailed justification.
Furthermore, owing to the inadequate integrability of ${X}^{u}$ (e.g., ${X}^{u}_{T}$ does not necessarily belong to $\mathbb{L}_{ \mathcal{F}_{T} }^{4n-2} ( \Omega )$),  
we must strengthen \cite[Lemma 4.1]{Wang-Liu-Bensoussan-Yiu-Wei-2025} with a weaker integrability condition. 
The main consequential result is also isolated in the following lemma.

\begin{lemma}\label{lem: actuarial estimate}
Suppose that $\xi \in \mathbb{L}_{ \mathcal{F}_{T} }^{q} ( \Omega )$ with some fixed $q > 1$ and that $\xi $ is independent of $( \varepsilon, \zeta )$. Then, 
\begin{equation*}
\sup_{ \tau \in [ t,T ] } | \mathbb{E}_{t} [ \xi {y}^{ t, \varepsilon, \zeta }_{\tau } ] |^{2} = o ( \varepsilon ).
\end{equation*}
Furthermore, for any given $\bar{u} \in \mathcal{U}$, $t \in [ 0,T )$, $\zeta \in \mathbb{L}^{2n}_{ \mathcal{F}_{t} } ( \Omega )$ and a sufficiently small $\varepsilon > 0$,
\begin{align*}
  {J}^{t} ( \bar{u}^{ t, \varepsilon, \zeta } )
& = {J}^{t} ( \bar{u} )
  + \mathbb{E}_{t} \big[ \big( 1 + \gamma ( {X}^{ \bar{u} }_{T} - \mathbb{E}_{t} [ {X}^{ \bar{u} }_{T} ] ) \big) ( {y}^{ t, \varepsilon, \zeta }_{T} + {z}^{ t, \varepsilon, \zeta }_{T} ) \big]
  - \frac{\gamma }{2} \mathbb{E}_{t} \big[ ( {y}^{ t, \varepsilon, \zeta }_{T} )^{2} \big] \\
& \quad - \sum_{j=2}^{2n} j \mathbb{E}_{t} \Big[ \Big( ( {X}^{ \bar{u} }_{T} - \mathbb{E}_{t} [ {X}^{ \bar{u} }_{T} ] )^{j-1} - \mathbb{E}_{t} \big[ ( {X}^{ \bar{u} }_{T} - \mathbb{E}_{t} [ {X}^{ \bar{u} }_{T} ] )^{j-1} \big] \Big) 
                                                 ( {y}^{ t, \varepsilon, \zeta }_{T} + {z}^{ t, \varepsilon, \zeta }_{T} ) \Big] {\varphi }_{j}^{ t, \bar{u} } \\
& \quad - \frac{1}{2} \sum_{j=2}^{2n} j (j-1) \mathbb{E}_{t} \Big[ ( {X}^{ \bar{u} }_{T} - \mathbb{E}_{t} [ {X}^{ \bar{u} }_{T} ] )^{j-2} ( {y}^{ t, \varepsilon, \zeta }_{T} )^{2} \Big] {\varphi }_{j}^{ t, \bar{u} } 
  + o ( \varepsilon ).
\end{align*}
\end{lemma}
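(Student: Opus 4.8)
\Cref{lem: actuarial estimate} packages two claims, which I would prove in order. First the uniform pairing estimate $\sup_{\tau\in[t,T]}|\mathbb{E}_t[\xi\,y^{t,\varepsilon,\zeta}_\tau]|^2=o(\varepsilon)$; then the second-order expansion of $J^t(\bar u^{t,\varepsilon,\zeta})$, obtained by expanding \cref{eq: objective function :eq} and substituting the first estimate together with the moment bounds \cref{eq:moment estimates :eq}. The decisive feature is that the first estimate upgrades the $O(\varepsilon)$ furnished by a crude Cauchy--Schwarz to $o(\varepsilon)$ while requiring only $\xi\in\mathbb{L}_{\mathcal{F}_T}^q(\Omega)$ with $q>1$; this is precisely what allows the expansion to go through when $\varphi$ is merely $C^1$ with locally Lipschitz derivatives. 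Throughout I abbreviate $y:=y^{t,\varepsilon,\zeta}$ and $z:=z^{t,\varepsilon,\zeta}$.

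For the first estimate I would use the variation-of-constants representation of the linear equation \cref{eq: variational equation :eq}. Writing $\Phi_{\tau,s}$ for the state-transition factor of its homogeneous part, and observing that the spike perturbs only the \emph{diffusion} coefficient and is supported on $[t,t+\varepsilon)$, one gets $y_\tau=\Phi_{\tau,t}\,\Gamma_\rho$ with $\rho:=\tau\wedge(t+\varepsilon)$, where $\Gamma_\rho=\int_t^\rho\Phi_{t,s}\,\delta^\zeta\sigma(s)\,dW_s$ plus an $O(\varepsilon)$ drift, so that $\mathbb{E}_t[\Gamma_\rho]=O(\varepsilon)$ and $\mathbb{E}_t[\Gamma_\rho^2]=O(\varepsilon)$. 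Conditioning on $\mathcal{F}_\rho$ gives $\mathbb{E}_t[\xi y_\tau]=\mathbb{E}_t[\bar G_\tau\,\Gamma_\rho]$ with $\bar G_\tau:=\mathbb{E}_\rho[\xi\Phi_{\tau,t}]$, and I would split $\bar G_\tau=\mathbb{E}_t[\bar G_\tau]+(\bar G_\tau-\mathbb{E}_t[\bar G_\tau])$. The first piece contributes $\mathbb{E}_t[\xi\Phi_{\tau,t}]\cdot\mathbb{E}_t[\Gamma_\rho]=O(\varepsilon)$; the second, for \emph{bounded} $\xi$, is bounded by $(\Var_t\bar G_\tau)^{1/2}(\mathbb{E}_t[\Gamma_\rho^2])^{1/2}$, where $\Var_t\bar G_\tau\to0$ as $\varepsilon\downarrow0$ because $\rho\downarrow t$ forces $\bar G_\tau\to\mathbb{E}_t[\xi\Phi_{\tau,t}]$, which is $\mathcal{F}_t$-measurable. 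This $\mathbb{L}^2$-continuity at the left endpoint is the mechanism that yields $o(\sqrt\varepsilon)$ rather than $O(\sqrt\varepsilon)$. To reach general $q\in(1,2)$, I would truncate $\xi=\xi\mathbbm{1}_{\{|\xi|\le R\}}+\xi\mathbbm{1}_{\{|\xi|>R\}}$: the bounded part obeys the above, while the tail is handled by H\"older, $|\mathbb{E}_t[\xi\mathbbm{1}_{\{|\xi|>R\}}y_\tau]|\le(\mathbb{E}_t[|\xi|^q\mathbbm{1}_{\{|\xi|>R\}}])^{1/q}(\mathbb{E}_t[|y_\tau|^{q'}])^{1/q'}$ with $1/q+1/q'=1$, the last factor being $O(\sqrt\varepsilon)$ by \cref{eq:moment estimates :eq} and the first tending to $0$ as $R\to\infty$ by conditional dominated convergence. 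Taking $\limsup_{\varepsilon\downarrow0}$ and then $R\to\infty$, with the $\tau$-uniform moment bounds on $\Phi$ ensuring uniformity in $\tau$, yields the claim.

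For the expansion, set $\Delta:=X^{\bar u^{t,\varepsilon,\zeta}}_T-X^{\bar u}_T$, $c:=X^{\bar u}_T-\mathbb{E}_t[X^{\bar u}_T]$ and $\delta:=\Delta-\mathbb{E}_t[\Delta]$, and use \cref{eq:moment estimates :eq} to replace $\Delta$ by $y_T+z_T$ up to an $\mathbb{L}^{2k}$-error of order $o(\varepsilon)$, recalling $\mathbb{E}_t[|y_T|^{2k}]=O(\varepsilon^k)$ and $\mathbb{E}_t[|z_T|^{2k}]=O(\varepsilon^{2k})$. The mean term of \cref{eq: objective function :eq} expands directly to $\mathbb{E}_t[y_T+z_T]$, and an elementary expansion of the variance (discarding $\mathbb{E}_t[(y_T-\mathbb{E}_t y_T)(z_T-\mathbb{E}_t z_T)]=O(\varepsilon^{3/2})$ and $(\mathbb{E}_t y_T)^2=O(\varepsilon^2)$) produces the affine weight multiplying $y_T+z_T$ together with the quadratic correction $-\tfrac\gamma2\mathbb{E}_t[(y_T)^2]$. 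For the $\varphi$ term, write $m_j:=\mathbb{E}_t[(X_T-\mathbb{E}_t X_T)^j]$; expanding $(c+\delta)^j$, taking $\mathbb{E}_t$, and discarding all powers $\delta^i$ with $i\ge3$ (each of order $\varepsilon^{i/2}=o(\varepsilon)$ by H\"older and \cref{eq:moment estimates :eq}) gives
\begin{equation*}
m_j^\varepsilon-m_j^0=j\,\mathbb{E}_t\big[(c^{j-1}-\mathbb{E}_t[c^{j-1}])(y_T+z_T)\big]+\tfrac{j(j-1)}{2}\,\mathbb{E}_t\big[c^{j-2}(y_T)^2\big]+o(\varepsilon).
\end{equation*}
The first summand is $o(\sqrt\varepsilon)$ --- its $y_T$-part by the first estimate applied to $\xi=c^{j-1}\in\mathbb{L}_{\mathcal{F}_T}^{2n/(j-1)}(\Omega)$ (note $2n/(j-1)>1$ for $j\le2n$) and its $z_T$-part by H\"older --- while the second is $O(\varepsilon)$; hence each moment increment is $o(\sqrt\varepsilon)$. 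Since $\varphi$ is only $C^1$, I would invoke the mean-value form $\varphi(\vec m^\varepsilon)-\varphi(\vec m^0)=\sum_j\varphi_j(\vec\theta_\varepsilon)(m_j^\varepsilon-m_j^0)$ along the segment joining $\vec m^0$ and $\vec m^\varepsilon$ (which, as $\bar u\in\mathcal{U}$, stays in a compact set away from the singular locus for small $\varepsilon$), replace $\varphi_j(\vec\theta_\varepsilon)$ by $\varphi_j^{t,\bar u}=\varphi_j(\vec m^0)$ at a cost controlled by the local Lipschitz constant times $|\vec m^\varepsilon-\vec m^0|^2=o(\varepsilon)$, and collect the first-order terms into the two $\varphi_j^{t,\bar u}$-sums of the statement.

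The main obstacle is the first estimate. A direct Cauchy--Schwarz gives only $O(\varepsilon)$ and moreover needs $\xi\in\mathbb{L}^2$, whereas here the relevant $\xi=c^{j-1}$ lies only in $\mathbb{L}^{2n/(j-1)}$, with exponent possibly close to $1$. Extracting the extra smallness rests on the fact that the spike enters the diffusion coefficient and is therefore paired against a quantity that is $\mathbb{L}^2$-continuous at the left endpoint $t$; marrying this continuity to the truncation that covers $q\in(1,2)$, while keeping all bounds uniform in $\tau$, is the delicate part, and it is exactly the strengthening of \cite[Lemma 4.1]{Wang-Liu-Bensoussan-Yiu-Wei-2025} advertised in the introduction. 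Once the first estimate is in hand, the expansion is a bookkeeping of orders.
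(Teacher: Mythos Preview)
Your overall strategy is sound and the mechanism you isolate---that the spike enters only through the diffusion, so $\mathbb{E}_t[\xi y_\tau]$ pairs $\xi\Phi_{\tau,t}$ against a martingale increment over $[t,t+\varepsilon]$---is exactly the right one. Your route differs from the paper's: you truncate $\xi$ at level $R$, run an $\mathbb{L}^2$ argument on the bounded piece, and absorb the tail by H\"older; the paper instead invokes the $\mathbb{L}^p$-martingale representation theorem of \cite{Lin-1995} to write $\mathbb{E}_\tau[\xi]/\mathcal{E}_\tau=\mathbb{E}_t[\xi/\mathcal{E}_\tau]+\int_t^\tau\gamma^\tau_s\,dW_s$ directly for $p\in(1,q)$, avoiding truncation at the cost of a less elementary tool. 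Both approaches are viable and your expansion of $J^t$ in the second part matches what the paper sketches by reference to \cite[Lemma~4.3]{Wang-Liu-Bensoussan-Yiu-Wei-2025}.

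There is, however, a genuine gap in your uniformity claim. You assert that ``$\tau$-uniform moment bounds on $\Phi$'' suffice to make $\operatorname{Var}_t\bar G_\tau\to0$ uniformly in $\tau$, but uniform $\mathbb{L}^p$-bounds on the family $\{\xi\Phi_{\tau,t}\}_{\tau\in[t,T]}$ do \emph{not} by themselves yield equicontinuity of the martingale $\rho\mapsto\mathbb{E}_\rho[\xi\Phi_{\tau,t}]$ at $\rho=t$ uniformly in $\tau$. The paper faces the same issue and resolves it by a continuity-plus-compactness argument: it shows that $\tau\mapsto\mathbb{E}_t\big[\big(\int_t^T|\gamma^\tau_s|^2\,ds\big)^{p/2}\big]$ is continuous on $[t,T]$ (via BDG, Minkowski, and dominated convergence applied to $\mathbb{E}_\tau[\xi]/\mathcal{E}_\tau-\mathbb{E}_\upsilon[\xi]/\mathcal{E}_\upsilon$), so the supremum over $\tau$ is attained at some $\upsilon\in[t,T]$, and then the limit $\varepsilon\downarrow0$ is taken at that fixed $\upsilon$. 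Your truncation scheme can be completed the same way---for bounded $\xi$ the family $\{\xi\Phi_{\tau,t}\}$ is $\mathbb{L}^2$-continuous in $\tau$, which is what you need to run a Weierstrass-type argument---but this step must be supplied, not inferred from moment bounds alone.
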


\begin{proof}
See \Cref{pf-lem: actuarial estimate}.
\end{proof}

\subsection{BSDEs and diagonal processes}
\label{sec: BSDEs and diagonal processes}

For $u \in \mathcal{U}$, let us introduce a flow of linear BSDEs that are indexed by $t \in [ 0,T )$ as follows:
\begin{equation}\label{eq: BSDEs :eq}
\left\{ \begin{aligned}
d {Y}^{t,u}_{s} & = - \big( {b}_{x} (s) {Y}^{t,u}_{s} + {\sigma }_{x} (s) \mathcal{Y}^{t,u}_{s} \big) ds + \mathcal{Y}^{t,u}_{s} d {W}_{s}, ~ \forall s \in [ 0,T ], \\
  {Y}^{t,u}_{T} & = 1 - \gamma ( {X}^{u}_{T} - \mathbb{E}_{t} [ {X}^{u}_{T} ] ) \\
                & \quad - \sum_{j=2}^{2n} j \Big( ( {X}^{u}_{T} - \mathbb{E}_{t} [ {X}^{u}_{T} ] )^{j-1} - \mathbb{E}_{t} \big[ ( {X}^{u}_{T} - \mathbb{E}_{t} [ {X}^{u}_{T} ] )^{j-1} \big] \Big) {\varphi }_{j}^{t,u}, \\
d {Z}^{t,u}_{s} & = - \big( 2 {b}_{x} (s) {Z}^{t,u}_{s} + | {\sigma }_{x} (s) |^{2} {Z}^{t,u}_{s} + 2 {\sigma }_{x} (s) \mathcal{Z}^{t,u}_{s} \\
                & \qquad~ + {b}_{xx} (s) {Y}^{t,u}_{s} + {\sigma }_{xx} (s) \mathcal{Y}^{t,u}_{s} \big) ds + \mathcal{Z}^{t,u}_{s} d {W}_{s}, ~ \forall s \in [ 0,T ], \\
  {Z}^{t,u}_{T} & = - \gamma
                    - \sum_{j=2}^{2n} j (j-1) ( {X}^{u}_{T} - \mathbb{E}_{t} [ {X}^{u}_{T} ] )^{j-2} {\varphi }_{j}^{t,u}.
\end{aligned} \right. 
\end{equation}
Considering the well-posedness of \cref{eq: BSDEs :eq}, we employ the following integrability assumption, which automatically holds if $\varphi $ is Lipschitz continuous.

\begin{assumption}\label{ass: uniform integrabiity condition with continuity}
For the given $u \in \mathcal{U}$, there exists a sufficiently small $\delta \in ( 0, \frac{1}{2n-1} )$ such that
${\varphi }_{j}^{ \cdot, u } \in \mathbb{L}_{\mathbb{F},loc}^{ \frac{ 2 ( 1 + \delta ) }{ 1 - \delta } } ( 0,T; \mathbb{L}^{ \frac{ 2 n ( 1 + \delta ) }{ 2 n - ( ( j - 1 ) \vee n ) ( 1 + \delta ) } } ( \Omega ) )$ for all $j$.
\end{assumption}

Notably, $\frac{ 2 n ( 1 + \delta ) }{ 1 - ( 2n-1 ) \delta } \ge \frac{ 2 n ( 1 + \delta ) }{ 2 n - ( ( j - 1 ) \vee n ) ( 1 + \delta ) } \ge \frac{ 2 ( 1 + \delta ) }{ 1 - \delta } > 2$ holds for all $( j, \delta )$,
and readers could consider the simpler assumption that ${\varphi }_{j}^{ \cdot, u } \in \mathbb{L}_{\mathbb{F},loc}^{ 2 ( 1 + \rho ) } ( 0,T; \mathbb{L}^{ 2n ( 1 + \rho ) } ( \Omega ) )$ with a small $\rho > 0$ uniformly for all $j$,
which is slightly stronger than \Cref{ass: uniform integrabiity condition with continuity}.
In short, we indeed need ${Y}^{t,u}_{T}, {Z}^{t,u}_{T} \in \mathbb{L}_{\mathcal{F}_{T}}^{ 1 + \delta } ( \Omega )$ for a.e. $t \in [ 0,T )$ with a small $t$-independent $\delta > 0$,
which can be derived from $\int_{0}^{\tau } \mathbb{E} [ | {Y}^{t,u}_{T} |^{ 1 + \delta } + | {Z}^{t,u}_{T} |^{ 1 + \delta } ] dt < \infty $ for all $\tau \in [ 0,T )$.
For a.e. $t \in [ 0,T )$ and any $u \in \mathcal{U}$ satisfying \Cref{ass: uniform integrabiity condition with continuity},
\cref{eq: BSDEs :eq} permits the unique solution $( {Y}^{t,u}, \mathcal{Y}^{t,u}, {Z}^{t,u}, \mathcal{Z}^{t,u} ) \in
{C}_{\mathbb{F}} ( 0,T; \mathbb{L}^{ 1 + \delta } ( \Omega ) ) \times \mathbb{L}_{\mathbb{F}}^{2} ( 0,T; \mathbb{L}^{ 1 + \delta } ( \Omega ) ) \times 
{C}_{\mathbb{F}} ( 0,T; \mathbb{L}^{ 1 + \delta } ( \Omega ) ) \times \mathbb{L}_{\mathbb{F}}^{2} ( 0,T; \mathbb{L}^{ 1 + \delta } ( \Omega ) )$
according to \cite{Briand-Delyon-Hu-Pardoux-Stoica-2003} for the $\mathbb{L}^{p}$-solutions of general BSDEs.
Related applications and theoretical improvements of those BSDEs can be found in \cite{ElKaroui-Peng-Quenez-1997,Chen-2010}, etc.

However, the existence, uniqueness and integrability of the diagonal process $\{ \mathcal{Y}^{s,u}_{s} \}_{ s \in [ 0,T ) }$ should be carefully justified to prove the necessity of \Cref{thm: maximum principle}.
To mitigate misunderstandings, we should note that it is $t=s$, instead of $s=t$, that is plugged into $\mathcal{Y}^{t,u}_{s}$ when defining the diagonal process.
The feasibility of this definition is due to the fact that the variables $( t,s )$ can be separated in the expressions \cref{eq: polynomial expression of calY :eq} of $\mathcal{Y}^{t,u}_{s}$,
where $t$ is arbitrarily fixed in $[ 0,T )$ and the domain of $s$ is independent of $t$.
Then, we have the following \Cref{lem: integrabiity and uniqueness of diagonal process} to guarantee the uniqueness and integrability of the diagonal processes 
$\{ ( {Y}^{t,u}_{t}, \mathcal{Y}^{t,u}_{t}, {Z}^{t,u}_{t} ) \}_{ t \in [ 0,T ) }$. 

\begin{lemma}\label{lem: integrabiity and uniqueness of diagonal process}
Suppose that $u \in \mathcal{U}$ satisfies \Cref{ass: uniform integrabiity condition with continuity} and that $\rho \in ( 0, \delta )$ is an arbitrarily fixed constant.
Then, the flow of the BSDEs \cref{eq: BSDEs :eq} generates a diagonal process triplet:
\begin{equation*}
\{ ( {Y}^{t,u}_{t}, \mathcal{Y}^{t,u}_{t}, {Z}^{t,u}_{t} ) \}_{ t \in [ 0,T ) }
\in    \mathbb{L}_{\mathbb{F},loc}^{ \frac{ 2 ( 1 + \delta ) }{ 1 - \delta } } \big( 0,T; \mathbb{L}^{ 1 + \delta } ( \Omega ) \big) \times \mathbb{L}_{\mathbb{F},loc}^{ 1 + \delta } \big( 0,T; \mathbb{L}^{ 1 + \delta - \rho } ( \Omega ) \big)
\times \mathbb{L}_{\mathbb{F},loc}^{ 1 + \delta } \big( 0,T; \mathbb{L}^{ 1 + \delta - \rho } ( \Omega ) \big).
\end{equation*}
Moreover, if $\{ ( {Y}^{t,u}, \mathcal{Y}^{t,u}, {Z}^{t,u}, \mathcal{Z}^{t,u} ) \}_{ t \in [ 0,T ) }$
and $\{ ( \hat{Y}^{t,u}, \hat{\mathcal{Y}}^{t,u}, \hat{Z}^{t,u}, \hat{\mathcal{Z}}^{t,u} ) \}_{ t \in [ 0,T ) }$ fulfill \cref{eq: BSDEs :eq}, then
\begin{equation*}
( {Y}^{t,u}_{t}, \mathcal{Y}^{t,u}_{t}, {Z}^{t,u}_{t} ) = ( \hat{Y}^{t,u}_{t}, \hat{\mathcal{Y}}^{t,u}_{t}, \hat{Z}^{t,u}_{t} ),
\quad \mathbb{P}\text{-}a.s., ~ a.e. ~ t \in [ 0,T ).
\end{equation*}
\end{lemma}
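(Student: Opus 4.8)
The plan is to exploit the linearity of the flow \cref{eq: BSDEs :eq} so as to collapse the two-parameter family $\{(Y^{t,u}_s,\mathcal{Y}^{t,u}_s,Z^{t,u}_s,\mathcal{Z}^{t,u}_s)\}$ into a fixed, $t$-independent collection of ``building-block'' BSDEs, with the entire dependence on the index $t$ relegated to finitely many $\mathcal{F}_t$-measurable scalar coefficients. First I would expand the terminal data in powers of $X^u_T$. Writing $m_t:=\mathbb{E}_t[X^u_T]$ and applying the binomial theorem, each $(X^u_T-m_t)^{j-1}$ (resp. $(X^u_T-m_t)^{j-2}$) is a polynomial in $X^u_T$ whose coefficients are $\mathcal{F}_t$-measurable; absorbing the subtracted means $\mathbb{E}_t[(X^u_T-m_t)^{j-1}]$, the constants $\gamma,m_t$, and the factors $\varphi_j^{t,u}$ into these coefficients yields $Y^{t,u}_T=\sum_{k=0}^{2n-1}C^t_k (X^u_T)^k$ and $Z^{t,u}_T=\sum_{k=0}^{2n-2}E^t_k (X^u_T)^k$ with $C^t_k,E^t_k$ being $\mathcal{F}_t$-measurable.

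Next I would introduce, for each power $k$, the universal linear BSDE carrying terminal value $(X^u_T)^k$ and the generator of the $Y$-equation, with solution $(\Upsilon^{(k)},\mathcal{V}^{(k)})$; for the $Z$-equation I would additionally absorb the source $b_{xx}Y^{t,u}+\sigma_{xx}\mathcal{Y}^{t,u}=\sum_k C^t_k(b_{xx}\Upsilon^{(k)}+\sigma_{xx}\mathcal{V}^{(k)})$ into a second universal family driven by the first. These processes live on all of $[0,T]$ and are independent of $t$. Since $C^t_k,E^t_k$ are $\mathcal{F}_t$-measurable, the process $\sum_k C^t_k\Upsilon^{(k)}$ solves, on $[t,T]$, precisely the $Y$-equation with terminal $Y^{t,u}_T$; by uniqueness of $\mathbb{L}^{p}$-solutions (\cite{Briand-Delyon-Hu-Pardoux-Stoica-2003}) it must coincide with $(Y^{t,u},\mathcal{Y}^{t,u})$ there, giving the separated representation $\mathcal{Y}^{t,u}_s=\sum_k C^t_k\mathcal{V}^{(k)}_s$ for $s\ge t$, namely \cref{eq: polynomial expression of calY :eq}, and likewise for $Y^{t,u}_s$ and $Z^{t,u}_s$. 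This is exactly what renders the diagonal meaningful: setting $s=t$ no longer samples a genuinely two-parameter object along its diagonal, but merely multiplies the time-$t$ value of the fixed one-parameter integrand $\mathcal{V}^{(k)}$ by the coefficient $C^t_k$, each an unambiguous function of the single variable $t$; hence $\mathcal{Y}^{t,u}_t=\sum_k C^t_k\mathcal{V}^{(k)}_t$ is well defined for a.e. $t$, which resolves the $t=s$-versus-$s=t$ subtlety flagged before \Cref{lem: integrabiity and uniqueness of diagonal process}.

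For the integrability assertion I would estimate the building blocks first. Because $b_x,\sigma_x$ are bounded by the Lipschitz constant $K$ of \cref{eq: Lipschitz continuity :eq} and $(b_{xx},\sigma_{xx})$ are bounded by hypothesis, the adjoint exponential $\Gamma$ has moments of every order, while $(X^u_T)^k\in\mathbb{L}^{2n/k}_{\mathcal{F}_T}(\Omega)$ for $u\in\mathcal{U}$; the $\mathbb{L}^{p}$-theory then places $\Upsilon^{(k)}\in C_{\mathbb{F}}(0,T;\mathbb{L}^{2n/k}(\Omega))$ and controls $\mathcal{V}^{(k)}$ in $\mathbb{L}^2_{\mathbb{F}}(0,T;\mathbb{L}^{2n/k}(\Omega))$. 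I would then bound the diagonal norms by Hölder's inequality, pairing the spatial integrability of $\mathcal{V}^{(k)}_t$ against that of $C^t_k$, whose factor $\varphi_j^{t,u}$ supplies exactly the integrability granted by \Cref{ass: uniform integrabiity condition with continuity}; the exponents $\tfrac{2(1+\delta)}{1-\delta}$ in time and $1+\delta$ (resp. $1+\delta-\rho$) in $\omega$ are the bookkeeping output of these pairings. Uniqueness of the diagonal triplet then follows from the same representation: for each $t$ the $\mathbb{L}^p$-uniqueness of \cref{eq: BSDEs :eq} forces any two solution families to share identical coefficients $C^t_k,E^t_k$ and identical building blocks ($ds\times d\mathbb{P}$-a.e.), so the diagonals agree for a.e. $t$; for $Y^{t,u}_t$ one may argue directly, as $Y^{t,u}$ admits a continuous version.

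The hard part will be the $\mathcal{Y}$- and $Z$-diagonals. Unlike $Y^{t,u}_t$, which is a genuine continuous-in-$s$ value, the integrand $\mathcal{V}^{(k)}$ is only an $\mathbb{L}^2_{\mathbb{F}}(0,T;\cdot)$ object, so extracting a pointwise-in-$s$ bound along the diagonal from the global $\int_0^{\tau}\|\mathcal{V}^{(k)}_s\|\,ds$ estimate incurs a small loss of spatial integrability; this is the source of the $\rho>0$ in the statement and the reason the comfortable $\mathbb{L}^2$-BSDE machinery must be replaced throughout by the $\mathbb{L}^{1+\delta}$-theory. Making this loss quantitative---tracking the degradation from $1+\delta$ to $1+\delta-\rho$ while preserving time-integrability on every $[0,\tau]$ with the sharp exponent---together with verifying the separated representation rigorously at the single instant $s=t$, is where the substance of the argument resides.
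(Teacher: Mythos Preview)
Your proposal is correct and follows essentially the same route as the paper: decompose the terminal data as a polynomial in $X^u_T$ with $\mathcal{F}_t$-measurable coefficients (the paper's $P^{k,u}_t$, $Q^{k,u}_t$), factor through $t$-independent building-block processes for each power (the paper's $M^{k,u}$, $\xi^{k,u}$, $N^{k,u}$, $\lambda^{k,u}$, $\mu^{k,u}$, made explicit via change of measure to $\mathbb{P}^{(1)},\mathbb{P}^{(2)}$ and the $\mathbb{L}^p$-martingale representation of \cite{Lin-1995}), and then pair coefficients against building blocks by H\"older to obtain the stated integrability, with the $\rho$-loss appearing exactly where you locate it---in the $k=0$ term, where the building block has moments of every finite order but not $\mathbb{L}^\infty$. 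The only cosmetic difference is that the paper writes the building blocks in closed form under the equivalent measures rather than invoking abstract BSDE well-posedness for your $(\Upsilon^{(k)},\mathcal{V}^{(k)})$.
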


\begin{proof}
See \Cref{pf-lem: integrabiity and uniqueness of diagonal process}.
\end{proof}

\section{Sufficient and necessary condition for ONECs}
\label{sec: Sufficient and necessary condition for ONECs}

Following the duality analysis as described in \cite[Section 3.4.3, pp. 134-137]{Yong-Zhou-1999}, namely,
plugging the BSDEs \cref{eq: BSDEs :eq} back into the asymptotic expansion of ${J}^{t} ( \bar{u}^{ t, \varepsilon, \zeta } )$ given by \Cref{lem: actuarial estimate},
in conjunction with using It\^o's rule and the moment estimates in \cref{eq:moment estimates :eq}, we obtain
\begin{align}\label{eq: spike perturbation :eq}
    {J}^{t} ( \bar{u}^{ t, \varepsilon, \zeta } ) - {J}^{t} ( \bar{u} )
& = \mathbb{E}_{t} \Big[ {Y}^{ t, \bar{u} }_{T} ( {y}^{ t, \varepsilon, \zeta }_{T} + {z}^{ t, \varepsilon, \zeta }_{T} ) + \frac{1}{2} {Z}^{ t, \bar{u} }_{T} ( {y}^{ t, \varepsilon, \zeta }_{T} )^{2} \Big] + o ( \varepsilon ) \\
& = \mathbb{E}_{t} \bigg[ \int_{t}^{ t + \varepsilon } \Big( {Y}^{ t, \bar{u} }_{s} {\delta }^{\zeta } b (s) + \mathcal{Y}^{ t, \bar{u} }_{s} {\delta }^{\zeta } \sigma (s) 
                                                           + \frac{1}{2} {Z}^{ t, \bar{u} }_{s} | {\delta }^{\zeta } \sigma (s) |^{2} \Big) ds \bigg]
  + o ( \varepsilon ), \quad \forall \bar{u} \in \mathcal{U}. \notag
\end{align}
In general, since $( {Y}^{ t, \bar{u} }, \mathcal{Y}^{ t, \bar{u} }, {Z}^{ t, \bar{u} } )$ depends on $t$ and $\mathcal{Y}^{ t, \bar{u} }$ is not necessarily continuous,
one cannot directly conclude that $\bar{u}$ is an ONEC if or only if 
${Y}^{ t, \bar{u} }_{t} {\delta }^{\zeta } b (t) + \mathcal{Y}^{ t, \bar{u} }_{t} {\delta }^{\zeta } \sigma (t) + \frac{1}{2} {Z}^{ t, \bar{u} }_{t} | {\delta }^{\zeta } \sigma (t) |^{2} \le 0$ $\mathbb{P}$-a.s. 
for a.e. $t \in [ 0,T )$ and any constant $\zeta \in \mathbb{R}$.
\cite{Wang-Liu-Bensoussan-Yiu-Wei-2025} merely arrives at the sufficiency of the following equilibrium condition for an ONEC $\bar{u} \in \mathcal{U}$:
\begin{equation*}
\limsup_{ s \downarrow t } \mathbb{E}_{t} \Big[ {Y}^{ t, \bar{u} }_{s} {\delta }^{\zeta } b (s) + \mathcal{Y}^{ t, \bar{u} }_{s} {\delta }^{\zeta } \sigma (s) + \frac{1}{2} {Z}^{ t, \bar{u} }_{s} | {\delta }^{\zeta } \sigma (s) |^{2} \Big] \le 0,
\quad \forall \zeta \in \mathbb{L}_{ \mathcal{F}_{t} }^{2n} ( \Omega ), ~ \mathbb{P}\text{-}a.s., ~ a.e. ~ t \in [ 0,T ).
\end{equation*}
The presence of limits renders it of little practical value.
Fortunately, in the present paper, 
we can use the expressions for $( {Y}^{ t, \bar{u} }_{s}, \mathcal{Y}^{ t, \bar{u} }_{s}, {Z}^{ t, \bar{u} }_{s} )$ derived in the proof of \Cref{lem: integrabiity and uniqueness of diagonal process} 
(see \cref{eq: polynomial expression of Y :eq}, \cref{eq: polynomial expression of calY :eq} and \cref{eq: polynomial expression of Z :eq}) to simplify the equilibrium condition
and obtain both the sufficiency and necessity for an ONEC $\bar{u} \in \mathcal{U}$.
Notably, \cite[Proposition 3.3]{Hu-Jin-Zhou-2017} no longer holds for our problem due to the presence of the non-separable terms $( {X}^{u}_{T} )^{j-1} {\varphi }^{t,u}_{j}$ in the terminal condition of ${Y}^{t,u}$.

\begin{theorem}\label{thm: maximum principle}
Suppose that $\bar{u} \in \mathcal{U}$ satisfies \Cref{ass: uniform integrabiity condition with continuity}.
Then, 
\begin{align}\label{eq: spike perturbation modified :eq}
  {J}^{t} ( \bar{u}^{ t, \varepsilon, \zeta } ) - {J}^{t} ( \bar{u} )
= \mathbb{E}_{t} \bigg[ \int_{t}^{ t + \varepsilon } \Big( {Y}^{ s, \bar{u} }_{s} {\delta }^{\zeta } b (s) + \mathcal{Y}^{ s, \bar{u} }_{s} {\delta }^{\zeta } \sigma (s) 
                                                         + \frac{1}{2} {Z}^{ s, \bar{u} }_{s} | {\delta }^{\zeta } \sigma (s) |^{2} \Big) ds \bigg]
+ o ( \varepsilon ), \\
\forall \zeta \in \mathbb{L}_{ \mathcal{F}_{t} }^{2n} ( \Omega ), ~ \mathbb{P}\text{-}a.s., ~ a.e. ~ t \in [ 0,T ). \notag
\end{align}
Furthermore, $\bar{u}$ is an ONEC if and only if
\begin{equation}\label{eq: equilibrium condition :eq}
{Y}^{ t, \bar{u} }_{t} {\delta }^{\zeta } b (t) + \mathcal{Y}^{ t, \bar{u} }_{t} {\delta }^{\zeta } \sigma (t) + \frac{1}{2} {Z}^{ t, \bar{u} }_{t} | {\delta }^{\zeta } \sigma (t) |^{2} \le 0, 
\quad \forall \zeta \in \mathbb{R}, ~ \mathbb{P}\text{-}a.s., ~ a.e. ~ t \in [ 0,T ),
\end{equation}
i.e., the following maximum principle holds:
\begin{equation*}
0 = \max_{ \zeta \in \mathbb{R} } \Big\{ {Y}^{ t, \bar{u} }_{t} {\delta }^{\zeta } b (t) + \mathcal{Y}^{ t, \bar{u} }_{t} {\delta }^{\zeta } \sigma (t) + \frac{1}{2} {Z}^{ t, \bar{u} }_{t} | {\delta }^{\zeta } \sigma (t) |^{2} \Big\}, 
\quad \mathbb{P}\text{-}a.s., ~ a.e. ~ t \in [ 0,T ).
\end{equation*}
\end{theorem}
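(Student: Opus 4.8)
The plan is to start from the duality identity \cref{eq: spike perturbation :eq}, which already expresses ${J}^{t} ( \bar{u}^{ t, \varepsilon, \zeta } ) - {J}^{t} ( \bar{u} )$ as $\mathbb{E}_{t} [ \int_{t}^{t+\varepsilon} ( {Y}^{ t, \bar{u} }_{s} {\delta }^{\zeta } b (s) + \mathcal{Y}^{ t, \bar{u} }_{s} {\delta }^{\zeta } \sigma (s) + \frac{1}{2} {Z}^{ t, \bar{u} }_{s} | {\delta }^{\zeta } \sigma (s) |^{2} ) \, ds ] + o ( \varepsilon )$ with the coefficients \emph{frozen} at the outer epoch $t$, and to upgrade it to the diagonal form \cref{eq: spike perturbation modified :eq}. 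First I would invoke the explicit polynomial representations \cref{eq: polynomial expression of Y :eq}, \cref{eq: polynomial expression of calY :eq} and \cref{eq: polynomial expression of Z :eq} established in the proof of \Cref{lem: integrabiity and uniqueness of diagonal process}: since the BSDEs \cref{eq: BSDEs :eq} are linear with $t$-independent coefficients $( {b}_{x}, {\sigma }_{x}, {b}_{xx}, {\sigma }_{xx} )$ and terminal data that are polynomials in ${X}^{\bar{u}}_{T}$ whose coefficients are $\mathcal{F}_{t}$-measurable, each of ${Y}^{ t, \bar{u} }_{s}$, $\mathcal{Y}^{ t, \bar{u} }_{s}$ and ${Z}^{ t, \bar{u} }_{s}$ factors as a finite sum $\sum_{k} {c}_{k} (t) \, {\Theta }^{(k)}_{s}$ of an $\mathcal{F}_{t}$-measurable coefficient ${c}_{k} (t)$ and a genuinely $t$-independent process ${\Theta }^{(k)}_{s}$. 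The coefficients ${c}_{k}$ are built from $\mathbb{E}_{t} [ {X}^{\bar{u}}_{T} ]$, the conditional central moments and the evaluations ${\varphi }_{j}^{ t, \bar{u} }$; since conditional expectations of fixed $\mathbb{L}^{2n}$ data are continuous martingales and $\varphi_{j}$ is continuous on the domain kept admissible by $\bar{u} \in \mathcal{U}$, each ${c}_{k}$ is $\mathbb{P}$-a.s. continuous in its index. Hence ${Y}^{ t, \bar{u} }_{s} - {Y}^{ s, \bar{u} }_{s} = \sum_{k} ( {c}_{k} (t) - {c}_{k} (s) ) {\Theta }^{(k)}_{s}$, and analogously for $\mathcal{Y}$ and $Z$, so the gap between \cref{eq: spike perturbation :eq} and \cref{eq: spike perturbation modified :eq} is governed by increments ${c}_{k} (t) - {c}_{k} (s)$ vanishing as $s \downarrow t$. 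I would then bound $\mathbb{E}_{t} [ \int_{t}^{t+\varepsilon} | {c}_{k} (t) - {c}_{k} (s) | \, | {\Theta }^{(k)}_{s} | \, ( | {\delta }^{\zeta } b (s) | + | {\delta }^{\zeta } \sigma (s) | + | {\delta }^{\zeta } \sigma (s) |^{2} ) \, ds ]$ by Hölder's inequality, using the local $\mathbb{L}^{ 1 + \delta }$ integrability of the diagonal triplet from \Cref{lem: integrabiity and uniqueness of diagonal process} together with the $\mathbb{L}^{2n}$-integrability of ${\delta }^{\zeta } b, {\delta }^{\zeta } \sigma$ furnished by \cref{eq: Lipschitz continuity :eq}; continuity of ${c}_{k}$ then forces this to be $o ( \varepsilon )$, yielding \cref{eq: spike perturbation modified :eq}.

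For the sufficiency direction I would first upgrade \cref{eq: equilibrium condition :eq} from constant $\zeta \in \mathbb{R}$ to arbitrary $\zeta \in \mathbb{L}^{2n}_{ \mathcal{F}_{t} } ( \Omega )$: the map $\zeta \mapsto {Y}^{ t, \bar{u} }_{t} {\delta }^{\zeta } b (t) + \mathcal{Y}^{ t, \bar{u} }_{t} {\delta }^{\zeta } \sigma (t) + \frac{1}{2} {Z}^{ t, \bar{u} }_{t} | {\delta }^{\zeta } \sigma (t) |^{2}$ is $\mathbb{P}$-a.s. continuous in $\zeta$ by the Lipschitz bound \cref{eq: Lipschitz continuity :eq}, so imposing the inequality for all rational $\zeta$ (a countable family, each valid off a null set) and invoking this continuity gives it simultaneously for all real $\zeta$ off a single null set, and then for any $\mathcal{F}_{t}$-measurable $\zeta$ by pathwise evaluation. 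Since $\zeta \in \mathcal{F}_{t} \subseteq \mathcal{F}_{s}$ for $s \ge t$, the pointwise nonpositivity holds for a.e. $s \in [ t, t + \varepsilon )$; substituting it into the diagonal integrand of \cref{eq: spike perturbation modified :eq} gives ${J}^{t} ( \bar{u}^{ t, \varepsilon, \zeta } ) - {J}^{t} ( \bar{u} ) \le o ( \varepsilon )$, hence $\liminf_{ \varepsilon \downarrow 0 } \varepsilon^{-1} ( {J}^{t} ( \bar{u} ) - {J}^{t} ( \bar{u}^{ t, \varepsilon, \zeta } ) ) \ge 0$, which is precisely \Cref{def: open-loop Nash equilibrium control}.

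For necessity, rewriting \cref{eq: open-loop Nash equilibrium control :eq} through \cref{eq: spike perturbation modified :eq} yields, for every $\zeta$, $\limsup_{ \varepsilon \downarrow 0 } \varepsilon^{-1} \mathbb{E}_{t} [ \int_{t}^{t+\varepsilon} {\psi }^{\zeta} (s) \, ds ] \le 0$ a.e. $t$, $\mathbb{P}$-a.s., where ${\psi }^{\zeta} (s) := {Y}^{ s, \bar{u} }_{s} {\delta }^{\zeta } b (s) + \mathcal{Y}^{ s, \bar{u} }_{s} {\delta }^{\zeta } \sigma (s) + \frac{1}{2} {Z}^{ s, \bar{u} }_{s} | {\delta }^{\zeta } \sigma (s) |^{2}$ is now a \emph{single} progressively measurable process rather than a $t$-indexed family; this is exactly the simplification the diagonal form buys us and the reason the stochastic Lebesgue differentiation theorem of \cite{Hu-Jin-Zhou-2017} is not needed here. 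Fixing a constant (rational) $\zeta$, the classical pathwise Lebesgue differentiation theorem applied to $s \mapsto {\psi }^{\zeta} (s, \omega) \in L^{1}_{loc}$ gives $\varepsilon^{-1} \int_{t}^{t+\varepsilon} {\psi }^{\zeta} (s) \, ds \to {\psi }^{\zeta} (t)$ for a.e. $( t, \omega )$, and the local $\mathbb{L}^{ 1 + \delta }$-bound from \Cref{lem: integrabiity and uniqueness of diagonal process} supplies the uniform integrability needed to pass $\mathbb{E}_{t}$ through this a.s. limit ($\mathbb{L}^{1}$-convergence of the running averages), so that $\varepsilon^{-1} \mathbb{E}_{t} [ \int_{t}^{t+\varepsilon} {\psi }^{\zeta} (s) \, ds ] \to \mathbb{E}_{t} [ {\psi }^{\zeta} (t) ] = {\psi }^{\zeta} (t)$ along a subsequence. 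Combined with the $\limsup \le 0$ bound this forces ${\psi }^{\zeta} (t) \le 0$ for a.e. $t$, $\mathbb{P}$-a.s.; letting $\zeta$ range over the rationals and using continuity in $\zeta$ delivers \cref{eq: equilibrium condition :eq}, and the equivalent maximum-principle formulation follows at once because $\zeta = 0$ makes the bracket vanish, so the nonpositive supremum is attained and equals $0$.

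The hard part will be the first step: justifying the diagonal replacement in \cref{eq: spike perturbation modified :eq} to within $o ( \varepsilon )$ under only local $\mathbb{L}^{ 1 + \delta }$ integrability. The difficulty is twofold — one must first exhibit the clean separation ${Y}^{ t, \bar{u} }_{s} = \sum_{k} {c}_{k} (t) {\Theta }^{(k)}_{s}$ (and its analogues for $\mathcal{Y}, Z$) with genuinely $t$-independent factors ${\Theta }^{(k)}$, and then control $\mathbb{E}_{t} [ \int_{t}^{t+\varepsilon} | {c}_{k} (t) - {c}_{k} (s) | \, | {\Theta }^{(k)}_{s} | \cdots \, ds ]$ using only the borderline integrability permitted by the weakened $\mathbb{L}^{2n}$-admissibility, where ordinary square-integrability is unavailable. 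A secondary but related delicacy in the necessity direction is the interchange of $\lim_{ \varepsilon \downarrow 0 }$ with $\mathbb{E}_{t}$; this is precisely where the $\mathbb{L}^{ 1 + \delta }$ integrability of the diagonal triplet, rather than mere $\mathbb{L}^{1}$ integrability, is essential to secure uniform integrability. Once these are in hand, both sufficiency and the reduction to the pointwise maximum principle are routine.
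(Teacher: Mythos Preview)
Your proposal is correct, and for the diagonal identity \cref{eq: spike perturbation modified :eq} and for sufficiency you follow the paper's route closely. The one substantive difference is in the necessity argument. Instead of invoking pathwise Lebesgue differentiation on $\psi^\zeta$ and then passing $\mathbb{E}_t$ through the limit via uniform integrability (which does work, given the $\mathbb{L}^{1+\delta-\rho}$ bounds from \Cref{lem: integrabiity and uniqueness of diagonal process}), the paper tests with $\zeta = z\,1_S$ for arbitrary $z \in \mathbb{R}$ and $S \in \mathcal{F}_t$, takes the \emph{unconditional} expectation of the $\limsup$ inequality, applies reverse Fatou followed by the deterministic Lebesgue differentiation theorem to $s \mapsto \mathbb{E}[\psi^\zeta(s)]$ to reach $\mathbb{E}[1_S\,\psi^z(t)] \le 0$, and concludes by the arbitrariness of $S$. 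This avoids both the subsequence extraction and the uniform-integrability check that your route requires, at the price of needing the reverse-Fatou domination. One minor remark on your Part~1: the H\"older step should be paired against the integrability of the $t$-independent factors $\Theta^{(k)}_s$ (the processes $M^{k,u}_s,\ \xi^{k,u}_s,\ N^{k,u}_s,\ \lambda^{k,u}_s,\ \mu^{k,u}_s$ of \cref{eq: polynomial expression of Y :eq}--\cref{eq: polynomial expression of Z :eq}), not against the diagonal triplet itself; the paper matches the $\mathbb{L}^{2n/k}$-integrability of $\Theta^{(k)}$ with the complementary $\mathbb{L}^{2n(1+\delta)/(2n-k(1+\delta))}$-integrability of the coefficients $P^{k,u}$, and in addition uses pathwise Lebesgue differentiation on $|\xi^{k,u}|^2$ (not just the sample continuity of $c_k$) to handle the factor that is not a priori continuous.
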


\begin{proof}
See \Cref{pf-thm: maximum principle}.
\end{proof}

\begin{remark}\label{rem: equilibrium condition reduced}
If there exists a $\zeta $-independent process $R$ such that ${\delta }^{\zeta } b (t) = {R}_{t} {\delta }^{\zeta } \sigma (t)$ $\mathbb{P}$-a.s. for a.e. $t \in [ 0,T )$
and the range $\{ {\delta }^{\zeta } \sigma (t): \zeta \in \mathbb{R} \}$ covers some open intervals containing the origin,
then \cref{eq: equilibrium condition :eq} as a quadratic inequality of ${\delta }^{\zeta } \sigma (t)$ reduces to
\begin{equation}\label{eq: equilibrium condition reduced :eq}
{Y}^{ t, \bar{u} }_{t} {R}_{t} + \mathcal{Y}^{ t, \bar{u} }_{t} = 0, \quad {Z}^{ t, \bar{u} }_{t} \le 0, 
\quad \mathbb{P}\text{-}a.s., ~ a.e. ~ t \in [ 0,T ),
\end{equation}
\end{remark}

Even if the controlled SDE is linear and all partial derivatives ${\varphi }_{j}$ are constant,
it remains challenging to verify the hypothesis that ${Y}^{ t, \bar{u} }_{s}$ is a polynomial of 
$( {X}^{\bar{u}}_{t}, {X}^{\bar{u}}_{s}, \mathbb{E}_{t} [ {X}^{\bar{u}}_{s} ], \mathbb{E}^{(1)}_{t} [ {X}^{\bar{u}}_{s} ], \ldots, \mathbb{E}^{(1)}_{t} [ ( {X}^{\bar{u}}_{s} )^{2n-1} ] )$ in the spirit of \cite[Section 4]{Hu-Jin-Zhou-2012}.
This is due to the presence of the polynomial of $\mathbb{E}_{t} [ {X}^{\bar{u}}_{T} ]$ and the non-separable ${X}^{\bar{u}}_{T} \mathbb{E}_{t} [ {X}^{\bar{u}}_{T} ]$ in the expression of ${Y}^{ t, \bar{u} }_{T}$.
As a consequence, the method in \cite[Section 4]{Hu-Jin-Zhou-2012} for proving the uniqueness of an ONEC might not be applicable to our problem in general.

\section{Applications to portfolio selection}
\label{sec: Closed-form solution}

In this section, we consider the following linear controlled SDE:
\begin{equation}\label{eq: linear controlled SDE :eq}
d {X}^{u}_{t} = ( {A}_{t} {X}^{u}_{t} + {B}_{t} {u}_{t} + {C}_{t} ) dt + ( {I}_{t} {X}^{u}_{t} + {D}_{t} {u}_{t} + {F}_{t} ) d {W}_{t}, \quad \forall t \in [ 0,T ], \quad
  {X}_{0} = {x}_{0},
\end{equation}
where $( A, B, C, I, D, F )$ are essentially bounded $\mathbb{F}$-predictable processes with $\esssup_{ [ 0,T ] \times \Omega } | {D}^{-1} | < \infty $.
This type of equation frequently appears and has been widely studied in financial applications, including mean-variance portfolio selection scenarios.
Readers can find economic insights into the parameters in, e.g., \cite[Remark 5.1]{Wang-Liu-Bensoussan-Yiu-Wei-2025}, when the white noise is changed to an affine function of ${X}^{u}_{t}$.
More precisely, for a typical financial market (see, e.g., \cite[Section 1.1]{ElKaroui-Peng-Quenez-1997}),
\cref{eq: linear controlled SDE :eq} formulates the dynamics of the wealth process ${X}^{u}$ corresponding to the investment amount process $u$
with the interest rate process $A$, the risk premium process ${D}^{-1}B$ and the volatility process $D$.
In addition, ${C}_{t}$ represents a given instantaneous income or consumption rate, while $( {I}_{t} {X}^{u}_{t} + {F}_{t} ) d {W}_{t}$ is the white noise of the infinitesimal increment $d {X}^{u}_{t}$.

For ease of reference, we list some concerned portfolio selection problems with the first few higher-order moments, which will be solved for an equilibrium strategy in this section.
\begin{itemize}
\item Mean-variance-skewness problem (see \Cref{ex: method of undetermined coefficients,ex: method of polynomial algebraic equation,ex: BSDEs characterization}) with
      \begin{equation*}
      {J}^{t} (u) = \mathbb{E}_{t} [ {X}^{u}_{T} ] 
                  - \frac{\gamma }{2} \mathbb{E}_{t} \big[ ( {X}^{u}_{T} - \mathbb{E}_{t} [ {X}^{u}_{T} ] )^{2} \big]
                  - {\varphi }_{3} \mathbb{E}_{t} \big[ ( {X}^{u}_{T} - \mathbb{E}_{t} [ {X}^{u}_{T} ] )^{3} \big].
      \end{equation*}
\item Mean-variance-skewness-kurtosis problem (see \Cref{ex: method of undetermined coefficients}) with
      \begin{equation*}
      {J}^{t} (u) = \mathbb{E}_{t} [ {X}^{u}_{T} ] 
                  - \frac{\gamma }{2} \mathbb{E}_{t} \big[ ( {X}^{u}_{T} - \mathbb{E}_{t} [ {X}^{u}_{T} ] )^{2} \big]
                  - {\varphi }_{3} \mathbb{E}_{t} \big[ ( {X}^{u}_{T} - \mathbb{E}_{t} [ {X}^{u}_{T} ] )^{3} \big]
                  - {\varphi }_{4} \mathbb{E}_{t} \big[ ( {X}^{u}_{T} - \mathbb{E}_{t} [ {X}^{u}_{T} ] )^{4} \big].
      \end{equation*}
\item (Nonlinear) mean-variance-excess kurtosis problem (see \Cref{ex: method of polynomial algebraic equation}) with
      \begin{equation*}
      {J}^{t} (u) = \mathbb{E}_{t} [ {X}^{u}_{T} ] - \frac{\gamma }{2} \mathbb{E}_{t} \big[ ( {X}^{u}_{T} - \mathbb{E}_{t} [ {X}^{u}_{T} ] )^{2} \big]
                  - \frac{K}{4} \big| \mathbb{E}_{t} \big[ ( {X}^{u}_{T} - \mathbb{E}_{t} [ {X}^{u}_{T} ] )^{2} \big] \big|^{2} 
                    \bigg( \frac{ \mathbb{E}_{t} \big[ ( {X}^{u}_{T} - \mathbb{E}_{t} [ {X}^{u}_{T} ] )^{4} \big] }{ | \mathbb{E}_{t} [ ( {X}^{u}_{T} - \mathbb{E}_{t} [ {X}^{u}_{T} ] )^{2} ] |^{2} } - 3 \bigg).
      \end{equation*}  
\item Mean-variance-standardized moment problem (see \Cref{ex: MV solution is ONEC}), including the mean-variance-skewness-kurtosis problem with
      \begin{equation*}
      {J}^{t} (u) = \mathbb{E}_{t} [ {X}^{u}_{T} ] - \frac{\gamma }{2} \mathbb{E}_{t} \big[ ( {X}^{u}_{T} - \mathbb{E}_{t} [ {X}^{u}_{T} ] )^{2} \big]
                  - {\varphi }_{3} \frac{ \mathbb{E}_{t} \big[ ( {X}^{u}_{T} - \mathbb{E}_{t} [ {X}^{u}_{T} ] )^{3} \big] }{ | \mathbb{E}_{t} [ ( {X}^{u}_{T} - \mathbb{E}_{t} [ {X}^{u}_{T} ] )^{2} ] |^{\frac{3}{2}} }
                  - {\varphi }_{4} \frac{ \mathbb{E}_{t} \big[ ( {X}^{u}_{T} - \mathbb{E}_{t} [ {X}^{u}_{T} ] )^{4} \big] }{ | \mathbb{E}_{t} [ ( {X}^{u}_{T} - \mathbb{E}_{t} [ {X}^{u}_{T} ] )^{2} ] |^{2} }.
      \end{equation*} 
\item Mean-variance-standard deviation problem (see \Cref{ex: BSDEs characterization} for random parameters) with
      \begin{equation*}
      {J}^{t} (u) = \mathbb{E}_{t} [ {X}^{u}_{T} ] - \frac{\gamma }{2} \mathbb{E}_{t} \big[ ( {X}^{u}_{T} - \mathbb{E}_{t} [ {X}^{u}_{T} ] )^{2} \big] 
                  - \big( \mathbb{E}_{t} \big[ ( {X}^{u}_{T} - \mathbb{E}_{t} [ {X}^{u}_{T} ] )^{2} \big] \big)^{ \frac{1}{2} }.
      \end{equation*}
\end{itemize}

In fact, under the wealth dynamics \cref{eq: linear controlled SDE :eq}, it suffices to determine the terminal value ${X}^{ \bar{u} }_{T}$ to find the ONEC $\bar{u}$. 
Thus, given ${X}^{ \bar{u} }_{T} = \xi \in \mathbb{L}_{ \mathcal{F}_{T} }^{2n} ( \Omega )$, 
one can derive $\bar{u}$ as the hedging strategy by solving the following BSDE:
\begin{equation}\label{eq: BSDE ending at xi :eq}
- d {X}_{t} = - \bigg( \Big( {A}_{t} - \frac{ {B}_{t} }{ {D}_{t} } {I}_{t} \Big) {X}_{t} + \frac{ {B}_{t} }{ {D}_{t} } \mathcal{X}_{t} + \Big( {C}_{t} - \frac{ {B}_{t} }{ {D}_{t} } {F}_{t} \Big) \bigg) dt - \mathcal{X}_{t} d {W}_{t},
\quad s.t. \quad {X}_{T} = \xi,
\end{equation}
and then arrive at $\bar{u}_{t} = \frac{1}{ {D}_{t} } ( \mathcal{X}_{t} - {I}_{t} {X}_{t} - {F}_{t} )$.
By using the martingale representation theorem, with a slight abuse of notation, 
our control problem is reduced to finding an appropriate $\mathfrak{X} \in \mathbb{L}_{\mathbb{F}}^{2} ( 0,T; \mathbb{L}^{2n} ( \Omega ) )$ for $\xi = \mathbb{E} [ \xi ] + \int_{0}^{T} \mathfrak{X}_{s} d {W}_{s}$ such that
\begin{equation}\label{eq: BSDE in linear case :eq}
\left\{ \begin{aligned}
d {Y}^{t}_{s} & = - ( {A}_{s} {Y}^{t}_{s} + {I}_{s} \mathcal{Y}^{t}_{s} ) ds + \mathcal{Y}^{t}_{s} d {W}_{s}, ~ \forall s \in [ 0,T ], \\
  {Y}^{t}_{T} & = 1 - \gamma ( {\chi }_{T} - {\chi }_{t} )
                    - \sum_{j=2}^{2n} j \big( ( {\chi }_{T} - {\chi }_{t} )^{j-1} - \mathbb{E}_{t} [ ( {\chi }_{T} - {\chi }_{t} )^{j-1} ] \big) {\varphi }_{j}^{t}; \\
  {\chi }_{s} & = \int_{0}^{s} \mathfrak{X}_{v} d {W}_{v}, \quad {Y}^{t}_{t} {B}_{t} + \mathcal{Y}^{t}_{t} {D}_{t} = 0, 
\end{aligned} \right. 
\end{equation}
where ${\varphi }_{j}^{t} := {\varphi }_{j} ( \mathbb{E}_{t} [ ( {\chi }_{T} - {\chi }_{t} )^{2} ], \ldots, \mathbb{E}_{t} [ ( {\chi }_{T} - {\chi }_{t} )^{2n} ] )$, provided that ${Z}^{ t, \bar{u} }_{t} \le 0$.
Hereafter, for the sake of brevity, we omit the statements ``a.e. $t \in [ 0,T ]$'' and ``$\mathbb{P}$-a.s.'' unless otherwise mentioned.
In addition, we should emphasize that the ONEC $\bar{u}$ is uniquely determined given $\mathfrak{X}$, and vice versa.

\begin{remark}\label{rem: risk neutral measure}
The terminal value class $\{ \xi + K \}_{ K \in \mathbb{R} }$ corresponds to the same $\mathfrak{X}$, 
so one needs to determine the ``arbitrary constant'' $K$ to fit the initial condition ${X}^{ \bar{u} }_{0} = {x}_{0}$ given $\mathfrak{X}$, referring to the risk-neutral pricing technic.
That is, for $\xi = K + \int_{0}^{T} \mathfrak{X}_{s} d {W}_{s}$, one can establish the following linear equation to derive $K$:
\begin{equation*}
{x}_{0} = \tilde{\mathbb{E}} \bigg[ {e}^{ - \int_{0}^{T} ( {A}_{v} - \frac{ {B}_{v} }{ {D}_{v} } {I}_{v} ) dv } \bigg( K + \int_{0}^{T} \mathfrak{X}_{s} d {W}_{s} \bigg)
                                  - \int_{0}^{T} {e}^{ - \int_{0}^{s} ( {A}_{v} - \frac{ {B}_{v} }{ {D}_{v} } {I}_{v} ) dv } \Big( {C}_{s} - \frac{ {B}_{s} }{ {D}_{s} } {F}_{s} \Big) ds \bigg],
\end{equation*}
where $\tilde{\mathbb{E}}$ is the expectation operator under the equivalent martingale measure $\tilde{\mathbb{P}}$ given by
\begin{equation*}
\frac{ d \tilde{\mathbb{P}} }{ d \mathbb{P} } \Big|_{ \mathcal{F}_{T} } = {e}^{ - \int_{0}^{T} \frac{ {B}_{v} }{ {D}_{v} } d {W}_{v} - \frac{1}{2} \int_{0}^{T} | \frac{ {B}_{v} }{ {D}_{v} } |^{2} dv }.
\end{equation*}
Then, one can derive $( \mathcal{X}, \bar{u} )$ by differentiating the both sides of 
\begin{equation}\label{eq: conditional expectation expression of X :eq}
{X}_{t} = \tilde{\mathbb{E}}_{t} \bigg[ {e}^{ - \int_{t}^{T} ( {A}_{v} - \frac{ {B}_{v} }{ {D}_{v} } {I}_{v} ) dv } \bigg( K + \int_{0}^{T} \mathfrak{X}_{s} d {W}_{s} \bigg)
                                      - \int_{t}^{T} {e}^{ - \int_{t}^{s} ( {A}_{v} - \frac{ {B}_{v} }{ {D}_{v} } {I}_{v} ) dv } \Big( {C}_{s} - \frac{ {B}_{s} }{ {D}_{s} } {F}_{s} \Big) ds \bigg].
\end{equation}
\end{remark}

\subsection{Closed-form solution for some cases with deterministic parameters}

Throughout this subsection, we assume that $( A,B,C,I,D,F )$ are deterministic.
It follows from \cref{eq: BSDE ending at xi :eq} and \cref{eq: conditional expectation expression of X :eq} that
\begin{equation*}
d \tilde{\mathbb{E}}_{t} \bigg[ \int_{0}^{T} \mathfrak{X}_{s} \frac{ {B}_{s} }{ {D}_{s} } ds \bigg] 
= \Big( \mathfrak{X}_{t} - {e}^{ \int_{t}^{T} ( {A}_{v} - \frac{ {B}_{v} }{ {D}_{v} } {I}_{v} ) dv } \big( {I}_{t} {X}^{\bar{u}}_{t} + {D}_{t} \bar{u}_{t} + {F}_{t} \big) \Big) \Big( d {W}_{t} + \frac{ {B}_{t} }{ {D}_{t} } dt \Big).
\end{equation*}
Clearly, a deterministic function $\mathfrak{X}$ could make the left-hand side of this SDE vanish, which leads to linear feedback control:
\begin{equation*}
\bar{u}_{t} = \frac{ {e}^{ - \int_{t}^{T} ( {A}_{v} - \frac{ {B}_{v} }{ {D}_{v} } {I}_{v} ) dv } \mathfrak{X}_{t} - {I}_{t} {X}^{\bar{u}}_{t} - {F}_{t} }{ {D}_{t} }.
\end{equation*}
Proceeding with the hypothesis $\mathfrak{X} \in \mathbb{L}^{2} ( 0,T ) \cap \mathbb{L}^{\infty }_{loc} ( 0,T )$, 
which means that ${\chi }_{T} - {\chi }_{s}$ has a normal distribution with a mean of $0$ (resp. $\int_{s}^{T} \mathfrak{X}_{v} {I}_{v} dv$) and a variance of $\int_{s}^{T} | \mathfrak{X}_{v} |^{2} dv$ 
conditioned on $\mathcal{F}_{s}$ under $\mathbb{P}$ (resp. $\mathbb{P}^{(1)}$ given by 
$\frac{ d \mathbb{P}^{(1)} }{ d \mathbb{P} } |_{ \mathcal{F}_{T} } = {e}^{ \int_{0}^{T} {I}_{v} d {W}_{v} - \frac{1}{2} \int_{0}^{T} | {I}_{v} |^{2} dv }$ as in \Cref{pf-lem: integrabiity and uniqueness of diagonal process}),
from \cref{eq: BSDEs solution :eq} we have 
\begin{align*}
  {e}^{ - \int_{s}^{T} {A}_{v} dv } {Y}^{t}_{s} 
& = 1 - \gamma \mathbb{E}^{(1)}_{s} [ {\chi }_{T} - {\chi }_{s} ] - \gamma ( {\chi }_{s} - {\chi }_{t} ) \\
& \quad - \sum_{j=2}^{2n} j \sum_{k=0}^{j-1} \binom{j-1}{k} \mathbb{E}^{(1)}_{s} [ ( {\chi }_{T} - {\chi }_{s} )^{j-1-k} ] ( {\chi }_{s} - {\chi }_{t} )^{k}  
                          {\varphi }_{j} \bigg( \vec{\alpha } \Big( \int_{t}^{T} | \mathfrak{X}_{v} |^{2} dv \Big) \bigg) \notag \\
& \quad + \sum_{j=2}^{2n} j \mathbb{E}_{t} [ ( {\chi }_{T} - {\chi }_{s} )^{j-1} ] {\varphi }_{j} \bigg( \vec{\alpha } \Big( \int_{t}^{T} | \mathfrak{X}_{v} |^{2} dv \Big) \bigg), \\
  {e}^{ - \int_{s}^{T} {A}_{v} dv } \mathcal{Y}^{t}_{s} 
& = - \gamma \mathfrak{X}_{s} - \mathfrak{X}_{s} \sum_{j=2}^{2n} j (j-1) \mathbb{E}^{(1)}_{s} [ ( {\chi }_{T} - {\chi }_{t} )^{j-2} ] {\varphi }_{j} \bigg( \vec{\alpha } \Big( \int_{t}^{T} | \mathfrak{X}_{v} |^{2} dv \Big) \bigg).
\end{align*}
Here $\mathbb{E}^{(1)}_{s} [ \cdot ] = \mathbb{E}^{(1)} [ \cdot | \mathcal{F}_{s} ]$ is the conditional expectation under $\mathbb{P}^{(1)}$.
Additionally, the abovementioned normal distribution of ${\chi }_{T} - {\chi }_{s}$ makes \Cref{ass: uniform integrabiity condition with continuity} hold.
It then follows from the equilibrium condition $0 = {Y}^{t}_{t} {B}_{t} + \mathcal{Y}^{t}_{t} {D}_{t}$ that
\begin{align}\label{eq: integral equation :eq}
& \frac{ {B}_{t} }{ {D}_{t} } \bigg( 1 - \gamma \int_{t}^{T} \mathfrak{X}_{v} {I}_{v} dv
                                       - \sum_{j=2}^{2n} j \sum_{k=1}^{j-1} \binom{j-1}{k} \Big( \int_{t}^{T} \mathfrak{X}_{v} {I}_{v} dv \Big)^{k} 
                                                                            {\alpha }_{j-1-k} {\varphi }_{j} ( \vec{\alpha } ) \bigg) \\
& = \mathfrak{X}_{t} \bigg( \gamma + \sum_{j=2}^{2n} j (j-1) \sum_{k=0}^{j-2} \binom{j-2}{k} \Big( \int_{t}^{T} \mathfrak{X}_{v} {I}_{v} dv \Big)^{k} 
                                                                              {\alpha }_{j-2-k} {\varphi }_{j} ( \vec{\alpha } ) \bigg), \notag
\end{align}
where $\vec{\alpha } = \vec{\alpha } ( \int_{t}^{T} | \mathfrak{X}_{v} |^{2} dv )$.
Therefore, the desired closed-form expression of $\mathfrak{X}_{t}$ can be derived by solving this integral equation.
Of course, we need to check whether ${Z}^{ t, \bar{u} }_{t} \le 0$ holds; i.e.,
\begin{equation}\label{eq: integral inequality :eq}
0 \le \gamma + \sum_{j=2}^{2n} j (j-1) \sum_{k=0}^{j-2} \binom{j-2}{k} \Big( 2 \int_{t}^{T} \mathfrak{X}_{v} {I}_{v} dv \Big)^{k} {\alpha }_{j-2-k} {\varphi }_{j} ( \vec{\alpha } ).
\end{equation}

\subsubsection{Application I: linear objective function}

Following the spirit of \cite[Section 4]{Hu-Jin-Zhou-2012}, we have the following theorem for the semi-closed expression of $( \mathfrak{X}, {Y}^{t}, \mathcal{Y}^{t} )$ given the continuous parameters and linear function $\varphi $.
Notably, the continuity assumption only serves to rigorously establish ordinary differential equations (ODEs) and does not affect the validity of the derived expressions.

\begin{theorem}\label{thm: method of undetermined coefficients}
Suppose that the parameters $( A,B,I,D )$ are continuous and that $\varphi $ is a linear function (i.e., ${\varphi }_{j}^{t} \equiv {\varphi }_{j} \in \mathbb{R}$ for all $t \in [ 0,T ]$).
If the following dynamic system for $( \Phi, {\phi }_{j}, {\psi }_{j} )$ permits a solution with ${\phi }_{2,s} \ne 0$ for all $s \in [ 0,T ]$, i.e.,
\begin{equation}\label{eq: ODEs :eq}
\left\{ \begin{aligned}
& 0 = \Big( {A}_{s} - \frac{ {B}_{s} }{ {D}_{s} } {I}_{s} \Big) {\Phi }_{s} + {\Phi }_{s}' - \Big| \frac{ {B}_{s} {\Phi }_{s} }{ {D}_{s} {\phi }_{2,s} } \Big|^{2} ( {\phi }_{3,s} - {\psi }_{3,s} ) {1}_{\{ n \ge 2 \}}, \\
& 0 = {A}_{s} {\phi }_{j,s} + {\phi }_{j,s}' 
    + j \frac{ {B}_{s} }{ {D}_{s} } {I}_{s} {\Phi }_{s} \frac{ {\phi }_{j+1,s} }{ {\phi }_{2,s} } {1}_{\{ j \le 2n-1 \}} 
    + \binom{j+1}{2} \Big| \frac{ {B}_{s} {\Phi }_{s} }{ {D}_{s} {\phi }_{2,s} } \Big|^{2} {\phi }_{j+2,s} {1}_{\{ j \le 2n-2 \}}, \\
& 0 = {A}_{s} {\psi }_{j,s} + {\psi }_{j,s}' + \binom{j+1}{2} \Big| \frac{ {B}_{s} {\Phi }_{s} }{ {D}_{s} {\phi }_{2,s} } \Big|^{2} {\psi }_{j+2,s} {1}_{\{ j \le 2n-2 \}}, \\
& {\Phi }_{T} = 1, \quad {\phi }_{j,T} = j {\varphi }_{j} + \gamma {1}_{\{ j = 2 \}}, \quad {\psi }_{j,T} = j {\varphi }_{j},
\end{aligned} \right.
\end{equation}
then, the control problem formulated by \cref{eq: BSDE in linear case :eq} yields the following solution:
\begin{equation*}
\left\{ \begin{aligned}
& {Y}^{t}_{s} = {\Phi }_{s} - \sum_{j=2}^{2n} \big( {\phi }_{j,s} ( {\chi }_{s} - {\chi }_{t} )^{j-1} - {\psi }_{j,s} \mathbb{E}_{t} [ ( {\chi }_{s} - {\chi }_{t} )^{j-1} ] \big), \\
& \mathcal{Y}^{t}_{s} = - \mathfrak{X}_{s} \sum_{j=2}^{2n} (j-1) {\phi }_{j,s} ( {\chi }_{s} - {\chi }_{t} )^{j-2}, \\
& \mathfrak{X}_{s} = \frac{ {B}_{s} {\Phi }_{s} }{ {D}_{s} {\phi }_{2,s} }.
\end{aligned} \right.
\end{equation*}
\end{theorem}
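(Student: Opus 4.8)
The plan is to verify the claimed triple by the method of undetermined coefficients, reversing the derivation that motivates the ansatz. Fix $t\in[0,T)$ and work on $s\in[t,T]$, writing $\eta_s:=\chi_s-\chi_t=\int_t^s\mathfrak{X}_v\,dW_v$. Assuming $(\Phi,\phi_j,\psi_j)$ solves \cref{eq: ODEs :eq} with $\phi_{2,s}\neq0$, I would first \emph{define} $\mathfrak{X}_s:=B_s\Phi_s/(D_s\phi_{2,s})$; since $(A,B,I,D)$ are continuous, $\phi_2$ is continuous and nowhere zero, and $\esssup|D^{-1}|<\infty$, the process $\mathfrak{X}$ is continuous on $[0,T]$ and hence lies in $\mathbb{L}^2(0,T)\cap\mathbb{L}^\infty_{loc}(0,T)$, so that $\eta_s$ is, conditionally on $\mathcal{F}_t$, a centered Gaussian with variance $V_s:=\int_t^s|\mathfrak{X}_v|^2\,dv$, and the conditional moments $m_{j-1}(s):=\mathbb{E}_t[\eta_s^{j-1}]=\alpha_{j-1}(V_s)$ are deterministic. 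I would then posit $Y^t_s$ and $\mathcal{Y}^t_s$ by the stated formulas, grouping the deterministic part as $\tilde\Phi(s):=\Phi_s+\sum_{j=2}^{2n}\psi_{j,s}m_{j-1}(s)$, so that $Y^t_s=\tilde\Phi(s)-\sum_{j=2}^{2n}\phi_{j,s}\eta_s^{j-1}$.

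The second step is the terminal check: evaluating at $s=T$ with $\Phi_T=1$, $\phi_{j,T}=j\varphi_j+\gamma1_{\{j=2\}}$, $\psi_{j,T}=j\varphi_j$ reproduces exactly the terminal datum $Y^t_T$ of \cref{eq: BSDE in linear case :eq}: the $\psi_{j,T}$ supply the expectation corrections $\mathbb{E}_t[\eta_T^{j-1}]=m_{j-1}(T)$, while the extra $\gamma$ in $\phi_{2,T}$ supplies the $-\gamma(\chi_T-\chi_t)$ term (using $\mathbb{E}_t[\eta_T]=0$). The third and central step is to apply It\^o's formula to $Y^t_s=\tilde\Phi(s)-\sum_j\phi_{j,s}\eta_s^{j-1}$. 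The $dW_s$-coefficient is $-\mathfrak{X}_s\sum_j(j-1)\phi_{j,s}\eta_s^{j-2}$, which is precisely the claimed $\mathcal{Y}^t_s$; the drift is a polynomial in $\eta_s$ whose coefficients involve $\Phi',\phi_j',\psi_j'$ and, through $\tilde\Phi'$, the moment derivatives. Imposing that this drift equals $-(A_sY^t_s+I_s\mathcal{Y}^t_s)$ becomes a polynomial identity in $\eta_s$; matching the coefficient of $\eta_s^{j-1}$ for $j=2,\dots,2n$ yields exactly the $\phi$-ODEs, once one substitutes $I_s\mathfrak{X}_s=\tfrac{B_s}{D_s}I_s\tfrac{\Phi_s}{\phi_{2,s}}$ and $|\mathfrak{X}_s|^2=|B_s\Phi_s/(D_s\phi_{2,s})|^2$.

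The $\eta_s^0$ part must then be decomposed a second time. Here I would use the heat-type identity $\alpha_m'(y)=\binom{m}{2}\alpha_{m-2}(y)$ for the Gaussian moments, i.e.\ $m_{j-1}'(s)=\binom{j-1}{2}m_{j-3}(s)|\mathfrak{X}_s|^2$, to expand $\tilde\Phi'(s)$. Since $m_{j-1}(s)=\alpha_{j-1}(V_s)$ is a monomial in $V_s$, and since $V_s$ ranges over a nondegenerate interval as $t$ varies with $s$ fixed, the $\eta_s^0$ identity may be matched power-by-power in $V_s$: the $V_s^0$ coefficient gives the $\Phi$-ODE, with the $(\phi_{3,s}-\psi_{3,s})$ correction arising from the $j=3$ second-order It\^o term and the $j=3$ moment-derivative term, while the $V_s^\ell$ coefficients for $\ell\geq1$ give the $\psi$-ODEs at odd indices; the even-indexed $\psi_j$ satisfy a decoupled subsystem that never enters $Y^t$ because $m_{j-1}=\alpha_{\mathrm{odd}}=0$, and is harmless. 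Finally, evaluating at $s=t$, where $\eta_t=0$ and $V_t=0$ force $Y^t_t=\Phi_t$ and $\mathcal{Y}^t_t=-\mathfrak{X}_t\phi_{2,t}$, the equilibrium constraint $Y^t_tB_t+\mathcal{Y}^t_tD_t=0$ is equivalent to $\mathfrak{X}_t=B_t\Phi_t/(D_t\phi_{2,t})$, consistent with the definition of $\mathfrak{X}$. Reading these steps forward gives the verification that the constructed triple solves \cref{eq: BSDE in linear case :eq}; the side condition $Z^{t,\bar u}_t\leq0$ of \cref{eq: integral inequality :eq} is checked separately.

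I expect the main obstacle to be the bookkeeping in this double matching—first isolating powers of $\eta_s$, then powers of the conditional variance $V_s$ within the $\eta_s^0$ term—together with the justification that matching coefficients is legitimate: the monomials $\{\eta_s^k\}$ are linearly independent as (Gaussian) random variables, and $V_s$ may be treated as a free parameter. The combinatorial constants $\binom{j+1}{2}$ and the boundary indicators $1_{\{j\leq2n-1\}}$, $1_{\{j\leq2n-2\}}$, $1_{\{n\geq2\}}$, which record that $\phi_{j+1},\phi_{j+2},\psi_{j+2}$ are absent near $j=2n$, must be tracked carefully so that the $\phi$-, $\psi$- and $\Phi$-ODEs emerge with exactly the stated coefficients and terminal data.
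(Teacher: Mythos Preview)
Your proposal is correct and follows essentially the same verification strategy as the paper: posit the ansatz for $Y^t_s$, apply It\^o's formula, read off $\mathcal{Y}^t_s$ from the martingale part, check the terminal datum against the terminal conditions in \cref{eq: ODEs :eq}, and show that the drift equals $-(A_sY^t_s+I_s\mathcal{Y}^t_s)$ precisely because the ODEs hold.

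The one place where you work harder than the paper is the handling of the $\eta_s^0$ term. You absorb the deterministic pieces into $\tilde\Phi(s)=\Phi_s+\sum_j\psi_{j,s}\,m_{j-1}(s)$ and then perform a second matching in powers of $V_s=\int_t^s|\mathfrak{X}_v|^2\,dv$, invoking the heat identity $\alpha_m'=\binom{m}{2}\alpha_{m-2}$. The paper avoids this entirely: it keeps $(\chi_s-\chi_t)^{j-1}$ and $\mathbb{E}_t[(\chi_s-\chi_t)^{j-1}]$ as \emph{separate} formal basis elements in the drift identity. The coefficient of each $(\chi_s-\chi_t)^{j-1}$ is exactly the $\phi_j$-ODE, the coefficient of each $\mathbb{E}_t[(\chi_s-\chi_t)^{j-1}]$ is exactly the $\psi_j$-ODE, and the remaining scalar piece is the $\Phi$-ODE (with the $(\phi_{3,s}-\psi_{3,s})$ correction coming from the $j=1$ second-order It\^o term for both the $\phi$- and $\psi$-sums). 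Since the theorem is a pure verification---the ODEs are \emph{assumed} to hold---there is no need to argue linear independence or to unfold the moments in $V_s$; one simply checks that each grouped coefficient vanishes by the corresponding ODE. This single-pass grouping also delivers the $\psi_j$-ODEs for all $j$, whereas your $V_s$-matching only constrains odd $j$ (as you note, the even ones are harmless but orphaned). Your approach is not wrong, just more bookkeeping than necessary.
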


\begin{proof}
See \Cref{pf-thm: method of undetermined coefficients}.
\end{proof}

\begin{example}\label{ex: method of undetermined coefficients}
If $n=1$, namely, for a mean-variance problem, it is easy to derive the corresponding closed-form solution:
\begin{equation*}
{\Phi }_{t} = {e}^{ \int_{t}^{T} ( {A}_{v} - \frac{ {B}_{v} }{ {D}_{v} } {I}_{v} ) dv }, \quad
{\phi }_{2,t} = ( \gamma + 2 {\varphi }_{2} ) {e}^{ \int_{t}^{T} {A}_{v} dv }, \quad
{\psi }_{2,t} = 2 {\varphi }_{2} {e}^{ \int_{t}^{T} {A}_{v} dv },
\end{equation*}
implying that
\begin{equation*}
\mathfrak{X}_{t} = \frac{ {B}_{s} {\Phi }_{s} }{ {D}_{s} {\phi }_{2,s} } = \frac{ {B}_{t} }{ ( \gamma + 2 {\varphi }_{2} ) {D}_{t} } {e}^{ - \int_{t}^{T} \frac{ {B}_{v} }{ {D}_{v} } {I}_{v} dv, }
\end{equation*}
and the linear feedback control is
\begin{equation*}
\bar{u}_{t} = \frac{ {B}_{t} }{ ( \gamma + 2 {\varphi }_{2} ) | {D}_{t} |^{2} } {e}^{ - \int_{t}^{T} {A}_{v} dv } - \frac{ {I}_{t} }{ {D}_{t} } {X}^{\bar{u}}_{t} - \frac{ {F}_{t} }{ {D}_{t} }.
\end{equation*}
Otherwise, it becomes increasingly difficult to derive explicit expressions for $\Phi, {\phi }_{j}, {\psi }_{j}$ as $n$ increases.
For example, when $n = 2$ and ${\varphi }_{4} = 0$, namely, for a mean-variance-skewness problem, 
one obtains the following results in sequence:
\begin{align*}
& {\phi }_{4}, {\psi }_{4} \equiv 0, \quad 
  {\phi }_{3,t} = {\psi }_{3,t} = 3 {\varphi }_{3} {e}^{ \int_{t}^{T} {A}_{v} dv }, \quad 
  {\psi }_{2,t} = 2 {\varphi }_{2} {e}^{ \int_{t}^{T} {A}_{v} dv }, \quad 
  {\Phi }_{t} = {e}^{ \int_{t}^{T} ( {A}_{v} - \frac{ {B}_{v} }{ {D}_{v} } {I}_{v} ) dv } \\
& | {\phi }_{2,t} |^{2} = {e}^{ 2 \int_{t}^{T} {A}_{v} dv } \bigg( ( \gamma + 2 {\varphi }_{2} )^{2} + 12 {\varphi }_{3} \Big( 1 - {e}^{ - \int_{t}^{T} \frac{ {B}_{v} }{ {D}_{v} } {I}_{v} dv } \Big) \bigg).
\end{align*}
Provided that $\gamma + 2 {\varphi }_{2} \ne 0$ and $( \gamma + 2 {\varphi }_{2} )^{2} + 12 {\varphi }_{3} ( 1 - {e}^{ - \int_{t}^{T} \frac{ {B}_{v} }{ {D}_{v} } {I}_{v} dv } ) > 0$ for all $t \in [ 0,T ]$,
it follows that
\begin{equation}\label{eq: mean-variance-skewness solution :eq}
\mathfrak{X}_{t} = \frac{ {B}_{t} }{ ( \gamma + 2 {\varphi }_{2} ) {D}_{t} } {e}^{ - \int_{t}^{T} \frac{ {B}_{v} }{ {D}_{v} } {I}_{v} dv } 
                   \bigg( 1 + \frac{ 12 {\varphi }_{3} }{ ( \gamma + 2 {\varphi }_{2} )^{2} } \Big( 1 - {e}^{ - \int_{t}^{T} \frac{ {B}_{v} }{ {D}_{v} } {I}_{v} dv } \Big) \bigg)^{ - \frac{1}{2} },
\end{equation}
and this leads to linear feedback control:
\begin{equation}\label{eq: mean-variance-skewness ONEC :eq}
\bar{u}_{t} = \frac{ {B}_{t} }{ ( \gamma + 2 {\varphi }_{2} ) | {D}_{t} |^{2} } {e}^{ - \int_{t}^{T} {A}_{v} dv } 
              \bigg| 1 + \frac{ 12 {\varphi }_{3} }{ ( \gamma + 2 {\varphi }_{2} )^{2} } \Big( 1 - {e}^{ - \int_{t}^{T} \frac{ {B}_{v} }{ {D}_{v} } {I}_{v} dv } \Big) \bigg|^{ - \frac{1}{2} }  
            - \frac{ {I}_{t} }{ {D}_{t} } {X}^{\bar{u}}_{t} - \frac{ {F}_{t} }{ {D}_{t} }.
\end{equation}

However, if ${\varphi }_{4} \ne 0$, namely, for a mean-variance-skewness-kurtosis problem, 
then we have ${\phi }_{4,t} = {\psi }_{4,t} = 4 {\varphi }_{4} {e}^{ \int_{t}^{T} {A}_{v} dv }$ and ${\psi }_{3,t} = 3 {\varphi }_{3} {e}^{ \int_{t}^{T} {A}_{v} dv }$, 
and we arrive at fully coupled nonlinear differential equations for the quadruplet $( \Phi, {\phi }_{2}, {\phi }_{3}, {\psi }_{2} )$:
\begin{equation*}
\left\{ \begin{aligned}
& 0 = \Big( {A}_{s} - \frac{ {B}_{s} }{ {D}_{s} } {I}_{s} \Big) {\Phi }_{s} + {\Phi }_{s}' - \Big| \frac{ {B}_{s} {\Phi }_{s} }{ {D}_{s} {\phi }_{2,s} } \Big|^{2} \Big( {\phi }_{3,s} - 3 {\varphi }_{3} {e}^{ \int_{t}^{T} {A}_{v} dv } \Big), \\
& 0 = {A}_{s} {\phi }_{2,s} + {\phi }_{2,s}' 
    + 2 \frac{ {B}_{s} }{ {D}_{s} } {I}_{s} \frac{ {\Phi }_{s} }{ {\phi }_{2,s} } {\phi }_{3,s}
    + 12 {\varphi }_{4} {e}^{ \int_{s}^{T} {A}_{v} dv } \cdot \Big| \frac{ {B}_{s} {\Phi }_{s} }{ {D}_{s} {\phi }_{2,s} } \Big|^{2}, \\
& 0 = {A}_{s} {\phi }_{3,s} + {\phi }_{3,s}' 
    + 12 {\varphi }_{4} {e}^{ \int_{s}^{T} {A}_{v} dv } \cdot \frac{ {B}_{s} }{ {D}_{s} } {I}_{s} \frac{ {\Phi }_{s} }{ {\phi }_{2,s} }, \\
& 0 = {A}_{s} {\psi }_{2,s} + {\psi }_{2,s}' + 12 {\varphi }_{4} {e}^{ \int_{s}^{T} {A}_{v} dv } \cdot \Big| \frac{ {B}_{s} {\Phi }_{s} }{ {D}_{s} {\phi }_{2,s} } \Big|^{2}, \\
& {\Phi }_{T} = 1, \quad {\phi }_{2,T} = 2 {\varphi }_{2} + \gamma, \quad {\phi }_{3,T} = 3 {\varphi }_{3}, \quad {\psi }_{2,T} = 2 {\varphi }_{j}.
\end{aligned} \right.
\end{equation*}
To the best of our knowledge, it is challenging to solve these equations unless $B I \equiv 0$.
Provided that $B I \equiv 0$ and $2 {\varphi }_{2} + \gamma > 0$, we obtain the following results in sequence:
\begin{align*}
& {\phi }_{3,t} = 3 {\varphi }_{3} {e}^{ \int_{t}^{T} {A}_{v} dv }, \quad 
  {\Phi }_{t} = {e}^{ \int_{t}^{T} {A}_{v} dv }, \quad
  {\phi }_{2,t} = {e}^{ \int_{t}^{T} {A}_{v} dv } \bigg( ( 2 {\varphi }_{2} + \gamma )^{3} + 36 {\varphi }_{4} \int_{t}^{T} \Big| \frac{ {B}_{s} }{ {D}_{s} } \Big|^{2} ds \bigg)^{ \frac{1}{3} }, \\
& \mathfrak{X}_{s} = \frac{ {B}_{s} }{ ( 2 {\varphi }_{2} + \gamma ) {D}_{s} } 
                     \bigg( 1 + \frac{ 36 {\varphi }_{4} }{ ( 2 {\varphi }_{2} + \gamma )^{3} } \int_{t}^{T} \Big| \frac{ {B}_{s} }{ {D}_{s} } \Big|^{2} ds \bigg)^{ - \frac{1}{3} }, \\
& \bar{u}_{t} = \frac{ {B}_{s} }{ ( 2 {\varphi }_{2} + \gamma ) | {D}_{s} |^{2} } {e}^{ - \int_{t}^{T} {A}_{v} dv }
                     \bigg( 1 + \frac{ 36 {\varphi }_{4} }{ ( 2 {\varphi }_{2} + \gamma )^{3} } \int_{t}^{T} \Big| \frac{ {B}_{s} }{ {D}_{s} } \Big|^{2} ds \bigg)^{ - \frac{1}{3} }
              - \frac{ {I}_{t} }{ {D}_{t} } {X}^{\bar{u}}_{t} - \frac{ {F}_{t} }{ {D}_{t} }.
\end{align*}
\end{example}

\subsubsection{Application II: nonlinear objective function with a polynomial condition}

In this subsection, we focus on the cases in which an analytical expression for an ONEC can be derived by solving a polynomial algebraic equation such as \cite[(5.14), therein]{Wang-Liu-Bensoussan-Yiu-Wei-2025}, 
even if $\varphi $ is not linear.
For this purpose, we write ${p}_{t} = \int_{t}^{T} \mathfrak{X}_{v} {I}_{v} dv$ and ${q}_{t} = \int_{t}^{T} | \mathfrak{X}_{v} |^{2} dv$ and introduce the function
\begin{align*}
H ( p,q ) & = 1 - \gamma p - \sum_{j=2}^{2n} \sum_{k=2}^{j} \binom{j}{k} k {p}^{k-1} {\alpha }_{j-k} (q) {\varphi }_{j} \big( \vec{\alpha } (q) \big) \\
          & = 1 - \gamma p - \sum_{k=2}^{2n} k {p}^{k-1} \sum_{j=k}^{2n} \binom{j}{k} {\alpha }_{j-k} (q) {\varphi }_{j} \big( \vec{\alpha } (q) \big).
\end{align*}

\begin{theorem}\label{thm: method of polynomial algebraic equation}
Suppose that there exists a constant sequence $\{ {c}_{k} \}_{k = 2,3, \ldots, 2n}$ such that
\begin{equation}\label{eq: binomial inversion :eq}
{\varphi }_{N-2m} \big( \vec{\alpha } (q) \big) = \sum_{k=0}^{m} \binom{N-2k}{N-2m} (-1)^{m-k} {c}_{N-2k} {\alpha }_{2m-2k} (q), \quad \forall m \le \frac{N}{2} - 1, ~ N = 2n, 2n-1.
\end{equation}
Then, \cref{eq: integral equation :eq} and \cref{eq: integral inequality :eq} are equivalent to
\begin{equation*}
H ( {p}_{t}, \cdot ) \equiv 1 - \gamma {p}_{t} - \sum_{k=2}^{2n} k {c}_{k} ( {p}_{t} )^{k-1} = {e}^{ - \int_{t}^{T} \frac{ {B}_{v} }{ {D}_{v} } {I}_{v} dv }, \quad 0 \ge {H}_{p} ( 2 {p}_{t}, {q}_{t} ).
\end{equation*}
\end{theorem}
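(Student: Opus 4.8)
The plan is to reduce \cref{eq: integral equation :eq}--\cref{eq: integral inequality :eq} to the stated scalar relation in three stages: first collapse the $q$-dependence of $H$ by a binomial inversion, then recognize the two brackets in \cref{eq: integral equation :eq} as $H$ and $-H_p$, and finally integrate a resulting linear ODE. Throughout I write $Z\sim\mathcal N(0,q)$, so that $\alpha_{2\ell}(q)=(2\ell-1)!!\,q^{\ell}=\mathbb{E}[Z^{2\ell}]$ and $\alpha_{2\ell-1}(q)=0$, and I set $M_j(p,q):=\sum_{k=0}^{j}\binom{j}{k}p^{k}\alpha_{j-k}(q)=\mathbb{E}[(p+Z)^{j}]$, which satisfies $\partial_{p}M_j=jM_{j-1}$.

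The heart of the argument is that the hypothesis \cref{eq: binomial inversion :eq} is equivalent to the collapse
\[
\sum_{j=k}^{2n}\binom{j}{k}\alpha_{j-k}(q)\,\varphi_j\big(\vec{\alpha}(q)\big)=c_k,\qquad k=2,\dots,2n,
\]
with a right-hand side independent of $q$. Since $\alpha_{\mathrm{odd}}=0$, only $j\equiv k\pmod 2$ contribute, so one may treat the even chain ($N=2n$) and the odd chain ($N=2n-1$) separately; setting $k=N-2m$, the forward map $\{\varphi\}\mapsto\{c\}$ and the map in \cref{eq: binomial inversion :eq} are lower-triangular with kernels $\binom{N-2i}{N-2m}\alpha_{2(m-i)}(q)$ and $\binom{N-2i}{N-2m}(-1)^{m-i}\alpha_{2(m-i)}(q)$. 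Composing them and applying the trinomial-revision identity $\binom{N-2\ell}{N-2i}\binom{N-2i}{N-2m}=\binom{N-2\ell}{N-2m}\binom{2m-2\ell}{2m-2i}$ reduces mutual inversion to the single identity
\[
\sum_{a=0}^{r}(-1)^{a}\binom{2r}{2a}(2a-1)!!\,(2r-2a-1)!!=\frac{(2r)!}{2^{r}r!}\,(1-1)^{r}=\delta_{r0},
\]
which is just $e^{qs^{2}/2}\,e^{-qs^{2}/2}=1$ read off coefficientwise. Granting this, $H(p,q)=1-\gamma p-\sum_{k=2}^{2n}k c_k p^{k-1}=:h(p)$ is $q$-free, which legitimizes the notation $H(p_t,\cdot)$.

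Next I would match the brackets. Peeling the $k=0$ term off $M_{j-1}$ and the $k=0,1$ terms off $M_j$ and using $\partial_{p}M_j=jM_{j-1}$, the left bracket of \cref{eq: integral equation :eq} equals $1-\gamma p_t-\sum_j j\varphi_j\big(M_{j-1}(p_t,q_t)-\alpha_{j-1}(q_t)\big)=H(p_t,q_t)=h(p_t)$ as an algebraic identity, the right bracket equals $\gamma+\sum_j j(j-1)\varphi_j M_{j-2}(p_t,q_t)=-H_p(p_t,q_t)=-h'(p_t)$, and the bracket of \cref{eq: integral inequality :eq} (evaluated at $2p_t$) equals $-H_p(2p_t,q_t)$, so \cref{eq: integral inequality :eq} is literally $0\ge H_p(2p_t,q_t)$. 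Thus after the collapse \cref{eq: integral equation :eq} reads $\tfrac{B_t}{D_t}h(p_t)=-\mathfrak{X}_t h'(p_t)$, forcing $\mathfrak{X}_t=-\tfrac{B_t}{D_t}\,h(p_t)/h'(p_t)$. Substituting this into $p_t'=-\mathfrak{X}_t I_t$ (the $t$-derivative of $p_t=\int_t^{T}\mathfrak{X}_v I_v\,dv$) gives $\tfrac{d}{dt}h(p_t)=h'(p_t)p_t'=\tfrac{B_t}{D_t}I_t\,h(p_t)$, a linear ODE with terminal value $h(p_T)=h(0)=1$; integrating yields $h(p_t)=\exp(-\int_t^{T}\tfrac{B_v}{D_v}I_v\,dv)$, the asserted equation. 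The converse follows by differentiating this exponential relation to recover the same ODE and hence the pointwise equation, the inequality being identical on both sides throughout.

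I expect the binomial-inversion step to be the main obstacle: one must check that \cref{eq: binomial inversion :eq} is \emph{exactly} the inverse of the collapse relation, which hinges on the alternating double-factorial identity above and on keeping the even/odd parity chains separate. A secondary subtlety is the nonvanishing of $h'(p_t)$ needed to solve for $\mathfrak{X}_t$ and to select, via the implicit function theorem, the continuous branch of $p_t$ solving $h(p_t)=e^{-\int_t^{T}\frac{B_v}{D_v}I_v\,dv}$ through $p_T=0$; note that letting $t\uparrow T$ in \cref{eq: integral inequality :eq} gives $h'(0)=-\gamma-2c_2\le 0$, so $h'$ does not vanish near $p=0$ once this holds strictly. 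Working throughout with the explicit formula $\mathfrak{X}_t=-\tfrac{B_t}{D_t}h(p_t)/h'(p_t)$ sidesteps any need to divide by $I_t$, so the argument goes through even where $I_t$ vanishes.
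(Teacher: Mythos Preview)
Your proposal is correct and follows essentially the same route as the paper: identify the two brackets in \cref{eq: integral equation :eq}--\cref{eq: integral inequality :eq} with $H$ and $-H_p$, show that the hypothesis \cref{eq: binomial inversion :eq} is equivalent to $H_q\equiv 0$ via binomial inversion, and then integrate the resulting ODE $d h(p_t)=\tfrac{B_t}{D_t}I_t\,h(p_t)\,dt$ with $h(p_T)=1$. The only cosmetic difference is in the inversion step: the paper factors $\alpha_{2m-2j}(q)$ as $\alpha_{2m-2j}(1)q^{m-j}$ and invokes the elementary $(q-q)^{m-j}=\delta_{mj}$, whereas you plug in the double-factorial values directly and read off the same Kronecker delta from $e^{qs^2/2}e^{-qs^2/2}=1$; both are the same identity in different clothes, and your added remarks on the nonvanishing of $h'$ near $p=0$ and on avoiding division by $I_t$ are welcome clarifications that the paper omits.
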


\begin{proof}
See \Cref{pf-thm: method of polynomial algebraic equation}.
\end{proof}

\begin{remark}\label{rem: method of polynomial algebraic equation}
Given ${H}_{q} \equiv 0$, or equivalently, \cref{eq: binomial inversion :eq}, we obtain the following differentiation result:
\begin{align*}
  \frac{d}{dq} \varphi \big( \vec{\alpha } (q) \big)
& = \sum_{m=0}^{n-1} (n-m) {q}^{n-m-1} {\alpha }_{2n-2m} (1) \sum_{k=0}^{m} \binom{2n-2k}{2n-2m} {\alpha }_{2m-2k} (1) (-q)^{m-k} {c}_{2n-2k} \\
& = \sum_{k=0}^{n-1} (n-k) (-q)^{n-1-k} {\alpha }_{2n-2k} (1) {c}_{2n-2k} \sum_{m=k}^{n-1} \binom{n-1-k}{n-1-m} (-1)^{n-1-m} \\
& = {c}_{2}.
\end{align*}
Exploiting these results, we can provide a class of $\varphi $ satisfying \cref{eq: binomial inversion :eq} as follows:
\begin{equation*}
\varphi ( {z}_{2}, {z}_{3}, \ldots, {z}_{2n} ) = G \big( {z}_{2}, {z}_{3} - {\alpha }_{3} ( {z}_{2} ), \ldots, {z}_{2n} - {\alpha }_{2n} ( {z}_{2} ) \big),
\end{equation*}
where the continuously differentiable function $G ( {z}_{2}, {z}_{3}, \ldots, {z}_{4} )$ satisfies $G ( {z}_{2}, 0,0, \ldots, 0 ) = {c}_{2} {z}_{2}$ and 
\begin{equation*}
{G}_{N-2m} ( {z}_{2}, 0,0, \ldots, 0 ) = \sum_{k=0}^{m} \binom{N-2k}{N-2m} (-1)^{m-k} {\alpha }_{2m-2k} ( {z}_{2} ) {c}_{N-2k}, \quad m < \frac{N}{2} - 1, ~ N = 2n, 2n-1.
\end{equation*} 
This implies that
$\varphi ( \mathbb{E} \big[ ( \tilde{x} - \mathbb{E} [ \tilde{x} ] )^{2} \big], \ldots, \mathbb{E} \big[ ( \tilde{x} - \mathbb{E} [ \tilde{x} ] )^{2n} \big] ) = {c}_{2} \Var [ \tilde{x} ]$ for any normal random variable $\tilde{x}$.
For example, $G$ could be an affine function of $( {z}_{3}, \ldots, {z}_{2n} )$, and then,
\begin{align*}
\varphi & = {c}_{2} {z}_{2} + \sum_{m=0}^{n-2} \big( {z}_{2n-2m} - {\alpha }_{2n-2m} ( {z}_{2} ) \big) \sum_{k=0}^{m} \binom{2n-2k}{2n-2m} (-1)^{m-k} {\alpha }_{2m-2k} ( {z}_{2} ) {c}_{2n-2k} \\
        & \quad             + \sum_{m=0}^{n-2} {z}_{2n-1-2m} \sum_{k=0}^{m} \binom{2n-1-2k}{2n-1-2m} (-1)^{m-k} {\alpha }_{2m-2k} ( {z}_{2} ) {c}_{2n-1-2k} \\
        & = {c}_{2} {z}_{2} + \sum_{m=0}^{n} \sum_{k=0}^{m} \binom{2n-2k}{2n-2m} (-1)^{m-k} {\alpha }_{2m-2k} (1) {c}_{2n-2k} {z}_{2}^{m-k} {z}_{2n-2m} \\
        & \quad             + \sum_{m=0}^{n-2} \sum_{k=0}^{m} \binom{2n-1-2k}{2n-1-2m} (-1)^{m-k} {\alpha }_{2m-2k} (1) {c}_{2n-1-2k} {z}_{2}^{m-k} {z}_{2n-1-2m},
\end{align*}
where the last equality holds under the conventions ${z}_{0} = 1$ and ${c}_{0} = 0$.
However, although the problem with above specified $\varphi $ can be reduced to solving the polynomial algebraic equation $H ( {p}_{t}, \cdot ) \equiv {e}^{ - \int_{t}^{T} \frac{ {B}_{v} }{ {D}_{v} } {I}_{v} dv }$ in this case, 
we cannot provide a universal closed-form expression in radical for $\mathfrak{X}$ with a general $\{ {c}_{k} \}_{ k = 2, 3, \ldots, 2n }$ according to the Abel--Ruffini theorem and Galois theory.
\end{remark}

\begin{example}\label{ex: method of polynomial algebraic equation}
For $n = 2$, given $\varphi ( {z}_{2}, {z}_{3}, {z}_{4} ) = {\varphi }_{2} {z}_{2} + {\varphi }_{3} {z}_{3}$ with ${\varphi }_{3} \ne 0$, as investigated in \Cref{ex: method of undetermined coefficients}, 
we have $H ( p,q ) = 1 - ( \gamma + 2 {\varphi }_{2} ) p - 3 {\varphi }_{3} {p}^{2}$. Hence,
\begin{equation*}
0 = 1 - {e}^{ - \int_{t}^{T} \frac{ {B}_{v} }{ {D}_{v} } {I}_{v} dv } - ( \gamma + 2 {\varphi }_{2} ) \int_{t}^{T} \mathfrak{X}_{v} {I}_{v} dv - 3 {\varphi }_{3} \Big( \int_{t}^{T} \mathfrak{X}_{v} {I}_{v} dv \Big)^{2}.
\end{equation*}
Provided that $\gamma + 2 {\varphi }_{2} \ne 0$ and $( \gamma + 2 {\varphi }_{2} )^{2} + 12 {\varphi }_{3} ( 1 - {e}^{ - \int_{t}^{T} \frac{ {B}_{v} }{ {D}_{v} } {I}_{v} dv } ) > 0$ for all $t \in [ 0,T ]$, 
to satisfy the terminal condition $\int_{t}^{T} \mathfrak{X}_{v} {I}_{v} dv \to 0$ as $t \uparrow T$, one obtains
\begin{equation*}
\int_{t}^{T} \mathfrak{X}_{v} {I}_{v} dv = - \frac{ \gamma + 2 {\varphi }_{2} }{ 6 {\varphi }_{3} } 
                                             \bigg( 1 - \bigg| 1 + \frac{ 12 {\varphi }_{3} }{ ( \gamma + 2 {\varphi }_{2} )^{2} } \Big( 1 - {e}^{ - \int_{t}^{T} \frac{ {B}_{v} }{ {D}_{v} } {I}_{v} dv } \Big) \bigg|^{ \frac{1}{2} } \bigg),
\end{equation*}
which leads to \cref{eq: mean-variance-skewness solution :eq} on $\{ t: {I}_{t} \ne 0 \}$. 
Notably, ${H}_{p} ( 2 {p}_{t}, {q}_{t} ) = - ( \gamma + 2 {\varphi }_{2} ) - 12 {\varphi }_{3} \int_{t}^{T} \mathfrak{X}_{v} {I}_{v} dv \le 0$ if and only if
\begin{equation*}
\gamma + 2 {\varphi }_{2} \ge 0, \quad \frac{3}{4} ( \gamma + 2 {\varphi }_{2} )^{2} + 12 {\varphi }_{3} ( 1 - {e}^{ - \int_{t}^{T} \frac{ {B}_{v} }{ {D}_{v} } {I}_{v} dv } ) \ge 0, \quad \forall t \in [ 0,T ];
\end{equation*}
thus, \cref{eq: mean-variance-skewness ONEC :eq} gives an ONEC only if these conditions hold. 

Apart from that, given that $\varphi ( {z}_{2}, {z}_{3}, {z}_{4} ) = \frac{K}{4} ( {z}_{4} - 3 | {z}_{2} |^{2} ) + {c}_{2} {z}_{2} \equiv {c}_{2} {z}_{2} + \frac{K}{4} | {z}_{2} |^{2} ( {z}_{4} {z}_{2}^{-2} - 3 )$ 
with $K > 0$ and ${c}_{2} \ge - \frac{\gamma }{2}$, where the last factor ${z}_{4} {z}_{2}^{-2} - 3$ corresponds to the excess kurtosis, we have 
\begin{equation*}
H ( p,q ) = 1 - ( \gamma + 2 {c}_{2} ) p - K {p}^{3}, \quad {H}_{p} = - ( \gamma + 2 {c}_{2} + 3 K {p}^{2} ) \le 0,
\end{equation*} 
and hence
\begin{equation*}
0 = 1 - {e}^{ - \int_{t}^{T} \frac{ {B}_{v} }{ {D}_{v} } {I}_{v} dv } - ( \gamma + 2 {c}_{2} ) \int_{t}^{T} \mathfrak{X}_{v} {I}_{v} dv - K \Big( \int_{t}^{T} \mathfrak{X}_{v} {I}_{v} dv \Big)^{3}.
\end{equation*}
By Cardano's formula, in conjunction with the terminal condition $\int_{t}^{T} \mathfrak{X}_{v} {I}_{v} dv \to 0$ as $t \uparrow T$, we have
\begin{align*}
  \int_{t}^{T} \mathfrak{X}_{v} {I}_{v} dv 
& = \Bigg( \frac{1}{2K} \Big( 1 - {e}^{ - \int_{t}^{T} \frac{ {B}_{v} }{ {D}_{v} } {I}_{v} dv } \Big)
         + \bigg( \frac{1}{ 4 {K}^{2} } \Big( 1 - {e}^{ - \int_{t}^{T} \frac{ {B}_{v} }{ {D}_{v} } {I}_{v} dv } \Big)^{2}
                + \frac{ ( \gamma + 2 {c}_{2} )^{3} }{ 27 {K}^{3} } \bigg)^{ \frac{1}{2} } \Bigg)^{ \frac{1}{3} } \\
& \quad + \Bigg( \frac{1}{2K} \Big( 1 - {e}^{ - \int_{t}^{T} \frac{ {B}_{v} }{ {D}_{v} } {I}_{v} dv } \Big)
               - \bigg( \frac{1}{ 4 {K}^{2} } \Big( 1 - {e}^{ - \int_{t}^{T} \frac{ {B}_{v} }{ {D}_{v} } {I}_{v} dv } \Big)^{2}
                      + \frac{ ( \gamma + 2 {c}_{2} )^{3} }{ 27 {K}^{3} } \bigg)^{ \frac{1}{2} } \Bigg)^{ \frac{1}{3} }.
\end{align*}
In particular, if ${c}_{2} = - \frac{\gamma }{2}$ and $I \ne 0$, then we obtain
\begin{equation*}
\int_{t}^{T} \mathfrak{X}_{v} {I}_{v} dv = {K}^{ - \frac{1}{3} } \Big( 1 - {e}^{ - \int_{t}^{T} \frac{ {B}_{v} }{ {D}_{v} } {I}_{v} dv } \Big)^{ \frac{1}{3} }, \quad
\mathfrak{X}_{t} = \frac{ {B}_{t} }{ 3 {K}^{ \frac{1}{3} } {D}_{t} } {e}^{ - \int_{t}^{T} \frac{ {B}_{v} }{ {D}_{v} } {I}_{v} dv } \Big( 1 - {e}^{ - \int_{t}^{T} \frac{ {B}_{v} }{ {D}_{v} } {I}_{v} dv } \Big)^{ - \frac{2}{3} }.
\end{equation*}
Similarly, for $\varphi = {c}_{2n} \sum_{m=0}^{n} \binom{2n}{2m} (-1)^{m} {\alpha }_{2m} (1) {z}_{2}^{m} {z}_{2n-2m} - \frac{\gamma }{2} {z}_{2}$ and ${c}_{2n} = \frac{K}{2n} > 0$, namely,
the following specified mean-variance-standardized moment objective function: 
\begin{equation*}
{J}^{t} (u) := \mathbb{E}_{t} [ {X}^{u}_{T} ]
             - \frac{K}{2n} \big| \mathbb{E}_{t} \big[ ( {X}^{u}_{T} - \mathbb{E}_{t} [ {X}^{u}_{T} ] )^{2} \big] \big|^{n}
               \sum_{m=0}^{n} \binom{2n}{2m} (-1)^{m} {\alpha }_{2m} (1) 
                              \frac{ \mathbb{E}_{t} \big[ ( {X}^{u}_{T} - \mathbb{E}_{t} [ {X}^{u}_{T} ] )^{2n-2m} \big] }{ \big| \mathbb{E}_{t} \big[ ( {X}^{u}_{T} - \mathbb{E}_{t} [ {X}^{u}_{T} ] )^{2} \big] \big|^{n-m} },
\end{equation*}
leading to ${J}^{t} (u) = \mathbb{E}_{t} [ {X}^{u}_{T} ]$ for any normally distributed ${X}^{u}_{T}$,
we have $H ( p,q ) = 1 - K {p}^{2n-1}$ with ${H}_{p} = - (2n-1) K {p}^{2n-2} \le 0$. Hence,
\begin{equation*}
\int_{t}^{T} \mathfrak{X}_{v} {I}_{v} dv = {K}^{ - \frac{1}{2n-1} } \Big( 1 - {e}^{ - \int_{t}^{T} \frac{ {B}_{v} }{ {D}_{v} } {I}_{v} dv } \Big)^{ \frac{1}{2n-1} }.
\end{equation*}
Provided that $I \ne 0$, we obtain
\begin{equation*}
\mathfrak{X}_{t} = \frac{ {B}_{t} }{ (2n-1) {K}^{ \frac{1}{2n-1} } {D}_{t} } {e}^{ - \int_{t}^{T} \frac{ {B}_{v} }{ {D}_{v} } {I}_{v} dv } \Big( 1 - {e}^{ - \int_{t}^{T} \frac{ {B}_{v} }{ {D}_{v} } {I}_{v} dv } \Big)^{ - \frac{2n-2}{2n-1} },
\end{equation*}
which leads to the following linear feedback representation for an ONEC:
\begin{equation*}
\bar{u}_{t} = \frac{ {B}_{t} }{ (2n-1) {K}^{ \frac{1}{2n-1} } | {D}_{t} |^{2} } {e}^{ - \int_{t}^{T} {A}_{v} dv } \Big( 1 - {e}^{ - \int_{t}^{T} \frac{ {B}_{v} }{ {D}_{v} } {I}_{v} dv } \Big)^{ - \frac{2n-2}{2n-1} }  
            - \frac{ {I}_{t} }{ {D}_{t} } {X}^{\bar{u}}_{t} - \frac{ {F}_{t} }{ {D}_{t} }.
\end{equation*}
\end{example}

\subsubsection{Application III: nonlinear objective function with a homogeneity condition}

In this subsection, we derive an interesting result for nonlinear functions $\varphi $, provided that $I \equiv 0$; see \Cref{thm: MV solution is ONEC}.
The result indicates the condition, under which the ONECs for the mean-variance portfolio problem and the higher-order moment problem coincide,
given the wealth dynamics described in, e.g., \cite[Section 6.8]{Yong-Zhou-1999}, \cite[Section 5]{Hu-Jin-Zhou-2012} and \cite{Bjork-Murgoci-Zhou-2014}.

Given $I \equiv 0$, the system formulated by \cref{eq: integral equation :eq} and \cref{eq: integral inequality :eq} for determining $\mathfrak{X}$ reduces to
\begin{equation}\label{eq: equilibrium condition for I=0 :eq}
\left\{ \begin{aligned}
& \frac{ {B}_{t} }{ {D}_{t} } = \mathfrak{X}_{t} \Bigg( \gamma + \sum_{j=1}^{n} 2j (2j-1) {\alpha }_{2j-2} \Big( \int_{t}^{T} | \mathfrak{X}_{v} |^{2} dv \Big) 
                                                                                {\varphi }_{2j} \bigg( \vec{\alpha } \Big( \int_{t}^{T} | \mathfrak{X}_{v} |^{2} dv \Big) \bigg) \Bigg), \\
& 0 \le \gamma + \sum_{j=1}^{n} 2j (2j-1) {\alpha }_{2j-2} \Big( \int_{t}^{T} | \mathfrak{X}_{v} |^{2} dv \Big) {\varphi }_{2j} \bigg( \vec{\alpha } \Big( \int_{t}^{T} | \mathfrak{X}_{v} |^{2} dv \Big) \bigg),
\quad a.e. ~ t \in [ 0,T ),
\end{aligned} \right.
\end{equation}
which exactly coincides with the result reported in \cite[Theorem 5.9]{Wang-Liu-Bensoussan-Yiu-Wei-2025}.
Moreover, if $\frac{d}{dy} \varphi ( \vec{\alpha } (y) )$ is square-integrable on some interval $( 0, \rho ]$ with
$\int_{0}^{T} | \frac{ {B}_{s} }{ {D}_{s} } |^{2} ds \le \int_{0}^{\rho } | \gamma + 2 \frac{d}{dy} \varphi ( \vec{\alpha } (y) ) |^{2} dy < \infty $, 
then, by virtue of the following identity for $y \in \mathbb{R}_{+}$:
\begin{equation}\label{eq: derivative of varphi-alpha :eq}
  \sum_{j=1}^{n} j (2j-1) {\alpha }_{2j-2} (y) {\varphi }_{2j} \big( \vec{\alpha } (y) \big)
= \sum_{j=1}^{n} {\alpha }_{2j}' (y) {\varphi }_{2j} \big( \vec{\alpha } (y) \big) 
= \frac{d}{dy} \varphi \big( \vec{\alpha } (y) \big),
\end{equation}
one can find a solution $\mathfrak{X}$ for \cref{eq: equilibrium condition for I=0 :eq} via the following steps:
\begin{itemize}
\item solving the algebraic equations $\int_{t}^{T} | \frac{ {B}_{s} }{ {D}_{s} } |^{2} ds = \int_{0}^{q} | \gamma + 2 \frac{d}{dy} \varphi ( \vec{\alpha } (y) ) |^{2} dy$ for the unique solution $q = \bar{q}_{t}$
\item substituting $\int_{t}^{T} | \mathfrak{X}_{s} |^{2} ds = \bar{q}_{t}$ into \cref{eq: equilibrium condition for I=0 :eq} to arrive at
\begin{equation*}
\mathfrak{X}_{t} = \frac{ {B}_{t} }{ {D}_{t} } \bigg( \gamma + 2 \frac{d}{dy} \varphi \big( \vec{\alpha } (y) \big) \big|_{ y = \bar{q}_{t} } \bigg)^{-1},
\end{equation*}
\end{itemize}
provided that $\inf_{ y \in ( 0, \bar{x}_{0} ] } \frac{d}{dy} \varphi \big( \vec{\alpha } (y) \big) \ge - \frac{\gamma }{2}$.
Indeed, the square-integrability condition for $\frac{d}{dy} \varphi ( \vec{\alpha } (y) )$ seems much more natural than the uniform boundedness condition employed in \cite[Theorem 5.4]{Wang-Liu-Bensoussan-Yiu-Wei-2025},
and this algorithm differs from the backward iteration method for differential/integral equations. 

\begin{theorem}\label{thm: MV solution is ONEC}
Suppose that $I \equiv 0$.
Then, the control $\bar{u}$ given by
\begin{equation}\label{eq: MV time-consistent solution :eq}
\bar{u}_{t} = \frac{ {B}_{t} }{ \gamma | {D}_{t} |^{2} } {e}^{ - \int_{t}^{T} {A}_{v} dv } - \frac{ {F}_{t} }{ {D}_{t} }, \quad \mathbb{P}\text{-}a.s., ~ a.e. ~ t \in [ 0,T ),
\end{equation}
is an ONEC for \cref{eq: objective function :eq} if and only if 
\begin{equation}\label{eq: homogeneity condition :eq}
\frac{d}{dy} \varphi \big( \vec{\alpha } (y) \big) = 0, \quad \forall y \in \bigg( 0, \frac{1}{ {\gamma }^{2} } \int_{0}^{T} \Big| \frac{ {B}_{s} }{ {D}_{s} } \Big|^{2} ds \bigg).
\end{equation}
\end{theorem}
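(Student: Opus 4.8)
The plan is to reduce the ONEC property of $\bar u$ to the already-derived system \eqref{eq: equilibrium condition for I=0 :eq} and then show that, for the mean–variance integrand, this system collapses to the homogeneity condition \eqref{eq: homogeneity condition :eq}. First I would observe that, with $I \equiv 0$, the deterministic integrand corresponding to the feedback law \eqref{eq: MV time-consistent solution :eq} is $\mathfrak X_t = \frac{B_t}{\gamma D_t}$: substituting this into the feedback formula $\bar u_t = \frac{1}{D_t}\big(e^{-\int_t^T A_v\,dv}\mathfrak X_t - F_t\big)$ recovers \eqref{eq: MV time-consistent solution :eq}. Since $(B,D^{-1})$ are essentially bounded, $\mathfrak X$ is bounded, so $X_T^{\bar u}$ is conditionally Gaussian with all moments finite and \Cref{ass: uniform integrabiity condition with continuity} holds by normality; hence $\bar u \in \mathcal U$, and \Cref{thm: maximum principle} together with \Cref{rem: equilibrium condition reduced} (note $\delta^\zeta b(t) = \frac{B_t}{D_t}\,\delta^\zeta\sigma(t)$ for the linear dynamics \eqref{eq: linear controlled SDE :eq}) applies. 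Thus $\bar u$ is an ONEC if and only if \eqref{eq: equilibrium condition for I=0 :eq} holds for this particular $\mathfrak X$.

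Next I would carry out the algebraic reduction. Writing $q_t := \int_t^T |\mathfrak X_v|^2\,dv = \frac{1}{\gamma^2}\int_t^T \big|\frac{B_v}{D_v}\big|^2\,dv$ and invoking the identity \eqref{eq: derivative of varphi-alpha :eq}, the first line of \eqref{eq: equilibrium condition for I=0 :eq} becomes
\[
\frac{B_t}{D_t} = \frac{B_t}{\gamma D_t}\Big(\gamma + 2\,\tfrac{d}{dy}\varphi(\vec\alpha(y))\big|_{y=q_t}\Big),
\]
while the second becomes $0 \le \gamma + 2\,\frac{d}{dy}\varphi(\vec\alpha(y))\big|_{y=q_t}$. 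On $\{t: B_t/D_t = 0\}$ the equality holds trivially (both sides vanish, as $\mathfrak X_t = 0$ there), and on $\{t: B_t/D_t \neq 0\}$ the equality is, after cancellation, equivalent to $\frac{d}{dy}\varphi(\vec\alpha(y))\big|_{y=q_t} = 0$, which in turn makes the inequality reduce to $\gamma \ge 0$ and hence automatic. Therefore $\bar u$ is an ONEC if and only if $g(q_t) := \frac{d}{dy}\varphi(\vec\alpha(y))\big|_{y=q_t} = 0$ for a.e.\ $t \in \{B/D \neq 0\}$.

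It then remains to pass between this a.e.-in-$t$ statement and the everywhere-in-$y$ condition \eqref{eq: homogeneity condition :eq} on $(0,q_0)$, where $q_0 = \frac{1}{\gamma^2}\int_0^T \big|\frac{B_s}{D_s}\big|^2\,ds$. The map $t \mapsto q_t$ is absolutely continuous and non-increasing with $q_T = 0$ and $q_0$ its value at $t=0$, so its range over $[0,T]$ is exactly $[0,q_0]$; moreover, for every $(a,b)\subset(0,q_0)$ a change-of-variables argument gives $\int_{\{a<q_t<b\}} \big|\frac{B_v}{D_v}\big|^2\,dv = \gamma^2(b-a) > 0$. For sufficiency, if \eqref{eq: homogeneity condition :eq} holds then $g$ vanishes on $(0,q_0)\ni q_t$, so both reduced relations hold and $\bar u$ is an ONEC. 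For necessity, $g$ is continuous on $(0,q_0)$ (since $\varphi\in C^1$), so if $g(y_0)\neq 0$ for some $y_0\in(0,q_0)$ there is an open interval $(a,b)\ni y_0$ on which $g\neq 0$; by the displayed positivity the set $\{t: q_t\in(a,b)\}$ must meet $\{B/D\neq 0\}$ in positive Lebesgue measure, contradicting $g(q_t)=0$ a.e.\ there. Hence $g\equiv 0$ on $(0,q_0)$, which is precisely \eqref{eq: homogeneity condition :eq}.

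The main obstacle is this final covering step: converting the a.e.-in-$t$ vanishing of $g(q_t)$ into the pointwise-in-$y$ identity on the whole interval $(0,q_0)$. It hinges on (i) the continuity of $g$ to upgrade ``a.e.\ $y$'' to ``every $y$'', and (ii) the measure estimate showing that each level set $\{q_t\in(a,b)\}$ carries positive $|B/D|^2$-mass and therefore intersects $\{B/D\neq 0\}$ in positive measure. A secondary technical point is the degenerate region $\{q_t = 0\}$, which may occur near $T$ when $B/D$ vanishes there: on it $\vec\alpha(q_t)$ is the origin and $\varphi$ may be ill posed, but there $B_t/D_t = 0$ and $\mathfrak X_t = 0$ a.e., so the equilibrium relations hold trivially and this set does not affect the equivalence, modulo the conventions of \Cref{rem: ill-posed cases}.
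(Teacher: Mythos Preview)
Your proposal is correct and follows essentially the same route as the paper: identify $\mathfrak X_t=\frac{B_t}{\gamma D_t}$, invoke \Cref{thm: maximum principle} with \Cref{rem: equilibrium condition reduced} to reduce to \eqref{eq: equilibrium condition for I=0 :eq}, use the identity \eqref{eq: derivative of varphi-alpha :eq} to rewrite that system as $\frac{d}{dy}\varphi(\vec\alpha(y))\big|_{y=q_t}=0$ on $\{B\neq 0\}$, and then pass between the a.e.-in-$t$ statement and the pointwise-in-$y$ condition on $(0,q_0)$ via the continuity and monotonicity of $t\mapsto q_t$. Your covering argument for the necessity direction is in fact more carefully spelled out than the paper's, which simply notes that $q_t$ is absolutely continuous and decreasing and asserts the conclusion.
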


\begin{proof}
See \Cref{pf-thm: MV solution is ONEC}.
\end{proof}

\begin{example}\label{ex: MV solution is ONEC}
For the mean-variance-standardized moment problem formulated in \cite[Example 5.7]{Wang-Liu-Bensoussan-Yiu-Wei-2025} (see also \Cref{rem: ill-posed cases} in the present paper),
the objective function ${J}_{MVSM}$ satisfies \cref{eq: homogeneity condition :eq} since the corresponding $\varphi $ is a linear combination of ${z}_{j} {z}_{2}^{ - \frac{j}{2} }$
such that ${\alpha }_{j} (y) ( {\alpha }_{2} (y) )^{ - \frac{j}{2} }$ is independent of $y$.
In the same manner, one can find infinitely many objective functions that satisfy \cref{eq: homogeneity condition :eq}
by letting $\varphi $ be a linear combination of $\prod_{i=1}^{m} {z}_{ 2 {j}_{i} }^{ {k}_{i} }$ with ${j}_{i} \in \mathbb{N}_{+}$, ${k}_{i} \in \mathbb{R}$ and $\sum_{i=1}^{m} {j}_{i} {k}_{i} = 0$, which leads to
\begin{equation*}
\prod_{i=1}^{m} \big( {\alpha }_{ 2 {j}_{i} } (y) \big)^{ {k}_{i} } \equiv \prod_{i=1}^{m} \big( ( 2 {j}_{i} - 1 ) !! \big)^{ {k}_{i} }.
\end{equation*}
This is also the reason that we name \cref{eq: homogeneity condition :eq} the homogeneity condition.
For example, $\varphi = {\varphi }_{3} {z}_{3} {z}_{2}^{ - \frac{3}{2} } + {\varphi }_{4} {z}_{4} {z}_{2}^{-2}$ with any ${\varphi }_{3}, {\varphi }_{4} \in \mathbb{R}$ satisfies this homogeneity condition, 
implying that the mean-variance portfolio strategy is also an ONEC for the mean-variance-skewness-kurtosis problem with
\begin{equation*}
{J}^{t} (u) = \mathbb{E}_{t} [ {X}^{u}_{T} ] - \frac{\gamma }{2} \mathbb{E}_{t} \big[ ( {X}^{u}_{T} - \mathbb{E}_{t} [ {X}^{u}_{T} ] )^{2} \big]
            - {\varphi }_{3} \frac{ \mathbb{E}_{t} \big[ ( {X}^{u}_{T} - \mathbb{E}_{t} [ {X}^{u}_{T} ] )^{3} \big] }{ | \mathbb{E}_{t} [ ( {X}^{u}_{T} - \mathbb{E}_{t} [ {X}^{u}_{T} ] )^{2} ] |^{\frac{3}{2}} }
            - {\varphi }_{4} \frac{ \mathbb{E}_{t} \big[ ( {X}^{u}_{T} - \mathbb{E}_{t} [ {X}^{u}_{T} ] )^{4} \big] }{ | \mathbb{E}_{t} [ ( {X}^{u}_{T} - \mathbb{E}_{t} [ {X}^{u}_{T} ] )^{2} ] |^{2} }.
\end{equation*} 
In addition, $\varphi $ could be the monomial ${z}_{8} {z}_{4}^{-1} {z}_{2}^{-2}$, and then the corresponding objective function
\begin{align*}
J (u) & = \mathbb{E} [ {X}^{u}_{T} ] - \frac{\gamma }{2} \mathbb{E} \big[ ( {X}^{u}_{T} - \mathbb{E} [ {X}^{u}_{T} ] )^{2} \big]
        - \frac{ \mathbb{E} \big[ ( {X}^{u}_{T} - \mathbb{E} [ {X}^{u}_{T} ] )^{8} \big] }
               { \mathbb{E} \big[ ( {X}^{u}_{T} - \mathbb{E} [ {X}^{u}_{T} ] )^{4} \big] \big( \mathbb{E} \big[ ( {X}^{u}_{T} - \mathbb{E} [ {X}^{u}_{T} ] )^{2} \big] \big)^{2} } \\
      & = \mathbb{E} [ {X}^{u}_{T} ] - \frac{\gamma }{2} \mathbb{E} \big[ ( {X}^{u}_{T} - \mathbb{E} [ {X}^{u}_{T} ] )^{2} \big]
        - \frac{ \mathbb{E} \big[ ( {X}^{u}_{T} - \mathbb{E} [ {X}^{u}_{T} ] )^{8} \big] / \big( \mathbb{E} \big[ ( {X}^{u}_{T} - \mathbb{E} [ {X}^{u}_{T} ] )^{2} \big] \big)^{4} }
               { \mathbb{E} \big[ ( {X}^{u}_{T} - \mathbb{E} [ {X}^{u}_{T} ] )^{4} \big] / \big( \mathbb{E} \big[ ( {X}^{u}_{T} - \mathbb{E} [ {X}^{u}_{T} ] )^{2} \big] \big)^{2} }; 
\end{align*}
that is, $J$ is the mean-variance utility minus the ratio of the eighth standardized moment to the kurtosis.
\end{example}

\subsection{Characterization for the case with random parameters}

In general, $\mathfrak{X}$ is not necessarily deterministic, and hence, \cref{eq: BSDE in linear case :eq} might suggest non-Markovian feedback control.
Instead, we provide an equivalent condition for an ONEC by employing stochastic Feynman-Kac representations, namely, BSPDEs for some conditional expectations.
Unlike the equivalent condition \cref{eq: equilibrium condition reduced :eq} formulated through \cref{eq: BSDEs :eq}, 
the upcoming equivalent condition formulated through \cref{eq: BSDEs auxiliary :eq} explicitly involves the control $\mathfrak{X}$ for \cref{eq: BSDE in linear case :eq}.
For brevity, we introduce the convention that ${f}^{i,j} = 0$ for $j < 0$ and any mapping $f$ indexed by $( i,j )$.

\begin{theorem}\label{thm: BSDEs characterization}
Suppose that $\mathfrak{X} \in \mathbb{L}_{\mathbb{F}}^{2} ( 0,T; \mathbb{L}^{2n} ( \Omega ) )$,
the corresponding $\bar{u} \in \mathbb{L}_{\mathbb{F}}^{2} ( 0,T; \mathbb{L}^{2n} ( \Omega ) ) \cap \mathcal{U}$ satisfies \Cref{ass: uniform integrabiity condition with continuity},
and $( \mathbb{M}^{i,j}, \mathfrak{M}^{i,j} )$, which is indexed by $i = 0,1,2$ and $j = 0,1, \ldots, 2n-i$, fulfills the following BSDE:
\begin{equation}\label{eq: BSDEs auxiliary :eq}
\left\{ \begin{aligned}
- d \mathbb{M}^{i,j}_{t} 
& = \bigg\{ \frac{1}{2} j (j-1) \mathbb{M}^{i,j-2}_{t} | \mathfrak{X}_{t} |^{2} + j \mathfrak{M}^{i,j-1}_{t} \mathfrak{X}_{t} + i j \mathbb{M}^{i,j-1}_{t} \mathfrak{X}_{t} {I}_{t} \\
& \qquad  + i \mathfrak{M}^{i,j}_{t} {I}_{t} + i \mathbb{M}^{i,j}_{t} \Big( {A}_{t} + \frac{i-1}{2} | {I}_{t} |^{2} \Big) \bigg\} dt
  - \mathfrak{M}^{i,j}_{t} d {W}_{t}, \\
    \mathbb{M}^{i,j}_{T} & = {1}_{\{ j =0 \}}.
\end{aligned} \right.
\end{equation}
Then,
\begin{equation*}
\mathbb{M}^{i,j}_{t} = \mathbb{E}^{(i)}_{t} \bigg[ {e}^{ i \int_{t}^{T} ( {A}_{v} + \frac{i-1}{2} | {I}_{v} |^{2} ) dv } \bigg( \int_{t}^{T} \mathfrak{X}_{s} d {W}_{s} \bigg)^{j} \bigg].
\end{equation*}
Moreover, $\bar{u}$ is an ONEC if and only if the following statements hold for a.e. $t \in [ 0,T ]$, $\mathbb{P}$-a.s.:
\begin{align}\label{eq: equilibrium condition of first-order :eq}
& \mathfrak{X}_{t} \bigg( \gamma \mathbb{M}^{ 1,0 }_{t} + \sum_{j=2}^{2n} j ( j - 1 ) \mathbb{M}^{ 1, j-2 }_{t} {\varphi }_{j} ( \mathbb{M}^{ 0,2 }_{t}, \ldots, \mathbb{M}^{ 0,2n }_{t} ) \bigg) \\
\notag
& = \Big( \mathbb{M}^{ 1,0 }_{t} \frac{ {B}_{t} }{ {D}_{t} } + \mathfrak{M}^{ 1,0 }_{t} \Big)
  - \gamma \Big( \mathbb{M}^{ 1,1 }_{t} \frac{ {B}_{t} }{ {D}_{t} } + \mathfrak{M}^{ 1,1 }_{t} \Big) \\
\notag
& \quad - \sum_{j=2}^{2n} j \Big( \frac{ {B}_{t} }{ {D}_{t} } \big( \mathbb{M}^{ 1, j-1 }_{t} - \mathbb{M}^{ 0, j-1 }_{t} \mathbb{M}^{ 1,0 }_{t} \big)
                                + \mathfrak{M}^{ 1, j-1 }_{t} - \mathbb{M}^{ 0, j-1 }_{t} \mathfrak{M}^{ 1,0 }_{t} \Big) {\varphi }_{j} ( \mathbb{M}^{ 0,2 }_{t}, \ldots, \mathbb{M}^{ 0,2n }_{t} ), \\
\label{eq: equilibrium condition of second-order :eq}
& 0 \le \gamma \mathbb{M}^{ 2,0 }_{t} + \sum_{j=2}^{2n} j ( j - 1 ) \mathbb{M}^{ 2, j-2 }_{t} {\varphi }_{j} ( \mathbb{M}^{ 0,2 }_{t}, \ldots, \mathbb{M}^{ 0,2n }_{t} ).
\end{align}
\end{theorem}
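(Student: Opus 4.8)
The plan is to prove the two assertions in turn: first the stochastic Feynman--Kac representation of $\mathbb{M}^{i,j}$, and then the equivalence between the ONEC property and \cref{eq: equilibrium condition of first-order :eq}--\cref{eq: equilibrium condition of second-order :eq}, the latter by translating the reduced equilibrium condition of \Cref{rem: equilibrium condition reduced} into the language of the diagonal processes. For the representation I would fix $i \in \{0,1,2\}$ and use the tilted measure $\mathbb{P}^{(i)}$ with density $e^{ i \int_0^T I_v dW_v - \frac{i^2}{2} \int_0^T |I_v|^2 dv }$, under which $W^{(i)}_t := W_t - i \int_0^t I_v dv$ is a Brownian motion (for $i=0$ this is just $\mathbb{P}$, matching the $\mathbb{P}^{(1)}$ used earlier). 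Since $(A,I)$ are bounded and $\mathfrak{X} \in \mathbb{L}_{\mathbb{F}}^2(0,T;\mathbb{L}^{2n}(\Omega))$, the density is bounded above and below, so $\mathbb{L}^p$-norms are comparable across $\mathbb{P}$ and $\mathbb{P}^{(i)}$, and the candidate $\widehat{\mathbb{M}}^{i,j}_t := \mathbb{E}^{(i)}_t[ e^{ i \int_t^T (A_v + \frac{i-1}{2}|I_v|^2) dv } (\chi_T - \chi_t)^j ]$ is well defined for $j \le 2n-i$ by the Burkholder--Davis--Gundy inequality applied to $\chi_s = \int_0^s \mathfrak{X}_v dW_v$.

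I would then verify that $\widehat{\mathbb{M}}^{i,j}$ solves \cref{eq: BSDEs auxiliary :eq}. The base case $j=0$ is immediate: $g_t \widehat{\mathbb{M}}^{i,0}_t$ with $g_t := e^{ i \int_0^t (A_v + \frac{i-1}{2}|I_v|^2) dv }$ is a $\mathbb{P}^{(i)}$-martingale, and reading off its diffusion coefficient gives the linear BSDE with driver $i \mathfrak{M}^{i,0}_t I_t + i \mathbb{M}^{i,0}_t (A_t + \frac{i-1}{2}|I_t|^2)$ and terminal value $1$. For general $j$, I would write $g_t \widehat{\mathbb{M}}^{i,j}_t = \sum_{k=0}^j \binom{j}{k} (-\chi_t)^k \Theta^{i,j-k}_t$ with the $\mathbb{P}^{(i)}$-martingales $\Theta^{i,l}_t := \mathbb{E}^{(i)}_t[ g_T \chi_T^l ]$, and apply It\^o's rule using $d\chi_t = \mathfrak{X}_t dW^{(i)}_t + i \mathfrak{X}_t I_t dt$. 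Collecting the drift produced by the measure change and by the quadratic variation of $\chi$ reproduces exactly the driver of \cref{eq: BSDEs auxiliary :eq}, in which the lower-order terms $\widehat{\mathbb{M}}^{i,j-1}, \widehat{\mathbb{M}}^{i,j-2}, \widehat{\mathfrak{M}}^{i,j-1}$ appear (the term $\tfrac12 j(j-1)\widehat{\mathbb{M}}^{i,j-2}|\mathfrak{X}|^2$ coming from $d\langle \chi \rangle$ and the terms in $\mathfrak{X} I$ from the Girsanov drift), with terminal value $\mathbbm{1}_{\{j=0\}}$. As \cref{eq: BSDEs auxiliary :eq} is a linear BSDE solvable recursively in $j$, uniqueness of its $\mathbb{L}^p$-solution (guaranteed under \Cref{ass: uniform integrabiity condition with continuity} by the theory of \cite{Briand-Delyon-Hu-Pardoux-Stoica-2003}) identifies $\mathbb{M}^{i,j}$ with $\widehat{\mathbb{M}}^{i,j}$.

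For the characterization I would start from \Cref{thm: maximum principle} and \Cref{rem: equilibrium condition reduced}: for the linear dynamics \cref{eq: linear controlled SDE :eq} one has ${\delta }^{\zeta} b(s) = B_s \zeta$ and ${\delta }^{\zeta} \sigma(s) = D_s \zeta$, so the representability hypothesis of \Cref{rem: equilibrium condition reduced} holds with $R_s = B_s/D_s$ and with $\{ {\delta }^{\zeta}\sigma(s):\zeta\in\mathbb{R}\}=\mathbb{R}$ because $D_s \ne 0$; hence $\bar{u}$ is an ONEC iff $Y^{t}_t \frac{B_t}{D_t} + \mathcal{Y}^{t}_t = 0$ and $Z^{t}_t \le 0$ for a.e. $t$, $\mathbb{P}$-a.s., where $(Y^t,\mathcal{Y}^t,Z^t)$ are the diagonal quantities of \cref{eq: BSDE in linear case :eq} and its companion $Z$-equation from \cref{eq: BSDEs :eq}. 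Solving the linear $Y$-BSDE under $\mathbb{P}^{(1)}$ gives $Y^t_t = \mathbb{E}^{(1)}_t[ e^{\int_t^T A_v dv} Y^t_T ]$, and expanding the terminal condition with $\mathbb{E}_t[(\chi_T-\chi_t)^{j-1}] = \mathbb{M}^{0,j-1}_t$ and $\varphi_j^t = \varphi_j(\mathbb{M}^{0,2}_t,\dots,\mathbb{M}^{0,2n}_t)$ yields $Y^t_t = \mathbb{M}^{1,0}_t - \gamma \mathbb{M}^{1,1}_t - \sum_{j=2}^{2n} j ( \mathbb{M}^{1,j-1}_t - \mathbb{M}^{0,j-1}_t \mathbb{M}^{1,0}_t ) \varphi_j^t$. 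The analogous computation under $\mathbb{P}^{(2)}$, for which $2(A+\frac12|I|^2)=2A+|I|^2$, gives $Z^t_t = -\gamma \mathbb{M}^{2,0}_t - \sum_{j=2}^{2n} j(j-1) \mathbb{M}^{2,j-2}_t \varphi_j^t$, so that $Z^t_t \le 0$ is precisely \cref{eq: equilibrium condition of second-order :eq}.

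The hard part will be the diagonal diffusion term $\mathcal{Y}^{t}_t$, since the flow $\{\mathcal{Y}^{t}\}_t$ need not be continuous and cannot be obtained by naively differentiating $t \mapsto Y^t_t$; here I would use the explicit polynomial representation of $\mathcal{Y}^{t}_s$ established in the proof of \Cref{lem: integrabiity and uniqueness of diagonal process}. Writing $e^{\int_t^s A_v dv} Y^t_s = \mathbb{E}^{(1)}_s[ e^{\int_t^T A_v dv} Y^t_T ]$ as a $\mathbb{P}^{(1)}$-martingale in $s$ and inserting the flow expansion $(\chi_T-\chi_t)^l = \sum_{k} \binom{l}{k} (\chi_s-\chi_t)^k (\chi_T-\chi_s)^{l-k}$, the diffusion coefficient of this martingale at $s=t$ equals $\mathcal{Y}^t_t$; evaluating at $s=t$ annihilates every factor $(\chi_s-\chi_t)^k$ with $k \ge 2$, leaving only the $k=0$ contributions $\mathfrak{M}^{1,\cdot}_t$ and the single $k=1$ contributions $\mathfrak{X}_t \mathbb{M}^{1,\cdot}_t$. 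This produces $\mathcal{Y}^t_t = \mathfrak{M}^{1,0}_t - \gamma(\mathfrak{M}^{1,1}_t + \mathfrak{X}_t \mathbb{M}^{1,0}_t) - \sum_{j=2}^{2n} j ( \mathfrak{M}^{1,j-1}_t + (j-1)\mathfrak{X}_t \mathbb{M}^{1,j-2}_t - \mathbb{M}^{0,j-1}_t \mathfrak{M}^{1,0}_t ) \varphi_j^t$. Substituting these expressions for $(Y^t_t, \mathcal{Y}^t_t)$ into $Y^t_t \frac{B_t}{D_t} + \mathcal{Y}^t_t = 0$ and collecting the coefficient of $\mathfrak{X}_t$ on the left-hand side reproduces \cref{eq: equilibrium condition of first-order :eq} term by term, completing the equivalence. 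Throughout, the delicate points are the measure-change bookkeeping that keeps the exponent $i(A+\frac{i-1}{2}|I|^2)$ exact for each $i$, and the justification---via \Cref{lem: integrabiity and uniqueness of diagonal process}---that this diagonal evaluation is legitimate under only the $\mathbb{L}^{1+\delta}$-type integrability of \Cref{ass: uniform integrabiity condition with continuity}.
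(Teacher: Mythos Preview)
Your proposal is correct and follows the same overall architecture as the paper's proof: identify $\mathbb{M}^{i,j}_t$ with the conditional expectation under $\mathbb{P}^{(i)}$, then express the diagonal triplet $(Y^t_t,\mathcal{Y}^t_t,Z^t_t)$ in terms of $(\mathbb{M}^{i,\cdot},\mathfrak{M}^{i,\cdot})$ and invoke \Cref{rem: equilibrium condition reduced}. The difference is in technique. For the representation the paper introduces the random field $M^{i,j}(t,x)=\mathbb{E}^{(i)}_t[e^{i\int_t^T(A_v+\frac{i-1}{2}|I_v|^2)dv}(\chi^{t,x}_T)^j]$, writes its BSPDE, uses $M^{i,j}_x=jM^{i,j-1}$ to turn the spatial derivatives into lower indices, and then sets $x=0$; for $\mathcal{Y}^t_s$ it applies the It\^o--Kunita--Ventzel formula to $M^{1,j}(s,\chi_s-\chi_t)$. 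Your route is more elementary: you verify the BSDE by direct It\^o calculus on the binomial expansion of $(\chi_T-\chi_t)^j$ (avoiding any BSPDE), and you read off $\mathcal{Y}^t_t$ by differentiating the $\mathbb{P}^{(1)}$-martingale $s\mapsto e^{\int_0^s A_v dv}Y^t_s$ termwise and evaluating at $s=t$. Both give identical formulas for $Y^t_t,\mathcal{Y}^t_t,Z^t_t$; the paper's random-field approach is more systematic and extends to general $s$, while yours is shorter and self-contained. One small slip: the Girsanov density $e^{i\int_0^T I_v dW_v - \frac{i^2}{2}\int_0^T|I_v|^2 dv}$ is \emph{not} bounded above and below (the stochastic integral is unbounded); what you actually need---and what holds since $I$ is essentially bounded---is that this density has all moments, so that $\mathbb{L}^p(\mathbb{P})$ and $\mathbb{L}^q(\mathbb{P}^{(i)})$ embed into one another for suitable exponents.
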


\begin{proof}
See \Cref{pf-thm: BSDEs characterization}.
\end{proof}

In particular, when $A$ is deterministic and $I \equiv 0$, one obtains $\mathbb{M}^{i,j}_{t} = {e}^{ i \int_{t}^{T} {A}_{v} dv } \mathbb{M}^{0,j}_{t}$ and $\mathfrak{M}^{i,j}_{t} = {e}^{ i \int_{t}^{T} {A}_{v} dv } \mathfrak{M}^{0,j}_{t}$.
Moreover, \cref{eq: equilibrium condition of first-order :eq} and \cref{eq: equilibrium condition of second-order :eq} reduce to
\begin{equation*}
\left\{ \begin{aligned}
& \mathfrak{X}_{t} \bigg( \gamma + \sum_{j=2}^{2n} j ( j - 1 ) \mathbb{M}^{ 0, j-2 }_{t} {\varphi }_{j} ( \mathbb{M}^{ 0,2 }_{t}, \ldots, \mathbb{M}^{ 0,2n }_{t} ) \bigg)
= \frac{ {B}_{t} }{ {D}_{t} } - \sum_{j=2}^{2n} j \mathfrak{M}^{ 0, j-1 }_{t} {\varphi }_{j} ( \mathbb{M}^{ 0,2 }_{t}, \ldots, \mathbb{M}^{ 0,2n }_{t} ), \\
& 0 \le \gamma + \sum_{j=2}^{2n} j ( j - 1 ) \mathbb{M}^{ 0, j-2 }_{t} {\varphi }_{j} ( \mathbb{M}^{ 0,2 }_{t}, \ldots, \mathbb{M}^{ 0,2n }_{t} ),
\end{aligned} \right.
\end{equation*}
while \cref{eq: BSDEs auxiliary :eq} reduces to
\begin{equation*}
- d \mathbb{M}^{0,j}_{t}
= \bigg\{ \frac{1}{2} j (j-1) \mathbb{M}^{0,j-2}_{t} | \mathfrak{X}_{t} |^{2} + j \mathfrak{M}^{0,j-1}_{t} \mathfrak{X}_{t} \bigg\} dt
- \mathfrak{M}^{0,j}_{t} d {W}_{t}, \quad
\mathbb{M}^{0,j}_{T} = {1}_{\{ j =0 \}}.
\end{equation*}
However, although we have established an indirect characterization for an ONEC via $\mathfrak{X}$ and a sequence of linear BSDEs and makes it easier than before to check whether a given $\mathfrak{X}$ corresponds to an ONEC,
it is still difficult to derive an ``equilibrium strategy'' $\mathfrak{X}$ from the abovementioned equilibrium condition and BSDEs in general.

\begin{example}\label{ex: BSDEs characterization}
First, we introduce the MV benchmark by setting $\varphi \equiv 0$, implying that $\mathfrak{X}_{t} = \frac{ {B}_{t} }{ \gamma {D}_{t} }$.
Then, we suppose that $A$ is deterministic and that $I \equiv 0$, leading to $\mathbb{M}^{ 0,0 } \equiv 1$ and $\mathfrak{M}^{ 0,0 } \equiv \mathbb{M}^{ 0,1 } \equiv \mathfrak{M}^{ 0,1 } \equiv 0$.
For $\varphi = \sqrt{ {z}_{2} }$, i.e., for the mean-variance-standard deviation problem mentioned in \Cref{rem: ill-posed cases}, heuristically, we have
\begin{equation*}
\mathfrak{X}_{t} \Big( 1 + \gamma \sqrt{ \mathbb{M}^{ 0,2 }_{t} } \Big) = \frac{ {B}_{t} }{ {D}_{t} } \sqrt{ \mathbb{M}^{ 0,2 }_{t} },
\end{equation*}
where the additional term (in comparison with the MV benchmark) $\mathbb{M}^{ 0,2 }$ arises from the Lipschitz BSDE:
\begin{equation*}
- d \mathbb{M}^{0,2}_{t} = \frac{ | {B}_{t} |^{2} \mathbb{M}^{ 0,2 }_{t} }{ | {D}_{t} |^{2} \big| 1 + \gamma \sqrt{ \mathbb{M}^{ 0,2 }_{t} } \big|^{2} } dt - \mathfrak{M}^{0,2}_{t} d {W}_{t}, \quad \mathbb{M}^{0,2}_{T} = 0.
\end{equation*}
Notably, $\mathbb{M}^{0,2} \equiv \mathfrak{M}^{0,2} \equiv 0$ is the unique solution to this BSDE. 
Therefore, $\mathfrak{X} \equiv 0$, leading to 
\begin{equation*}
{X}^{\bar{u}}_{T} = \xi = \mathbb{E} [ \xi ] 
                  = {x}_{0} {e}^{ \int_{0}^{T} {A}_{v} dv } 
                  + \int_{0}^{T} {e}^{ \int_{s}^{T} {A}_{v} dv } \tilde{\mathbb{E}} \Big[ \Big( {C}_{s} - \frac{ {B}_{s} }{ {D}_{s} } {F}_{s} \Big) \Big] ds
\end{equation*}
(see also \Cref{rem: risk neutral measure}) and $\bar{u}_{t} = - \frac{1}{ {D}_{t} } ( {I}_{t} \mathbb{E} [ \xi ] + {F}_{t} )$.
This extends the corresponding results in \Cref{rem: ill-posed cases} to the random parameter case.

Additionally, for $\varphi = K {z}_{3}$ with $K \in \mathbb{R}$, i.e. for the mean-variance-skewness problem, we have
\begin{equation*}
\gamma \mathfrak{X}_{t} = \frac{ {B}_{t} }{ {D}_{t} } - 3K \mathfrak{M}^{ 0,2 }_{t},
\end{equation*}
where the additional term (in comparison with the MV benchmark) $\mathfrak{M}^{ 0,2 }$ arises from the quadratic BSDE:
\begin{equation*}
- d \mathbb{M}^{0,2}_{t} = \frac{1}{ {\gamma }^{2} } \bigg| \frac{ {B}_{t} }{ {D}_{t} } - 3K \mathfrak{M}^{ 0,2 }_{t} \bigg|^{2} dt - \mathfrak{M}^{0,2}_{t} d {W}_{t}, 
\quad \mathbb{M}^{0,2}_{T} = 0.
\end{equation*}
Interested readers can refer to \cite[Section A.9.1]{Cohen-Elliott-2015} and the references therein for the related theories.
In particular, \cite[Lemma A.9.13]{Cohen-Elliott-2015} guarantees the existence and uniqueness of $\mathfrak{M}^{ 0,2 }$ and implies that $\{ \int_{0}^{t} \mathfrak{M}^{0,2}_{s} d {W}_{s} \}_{ t \in [ 0,T ] }$ is a martingale with bounded mean oscillation.
As a result, the unique ONEC can be derived according to \Cref{rem: risk neutral measure} and \cref{eq: BSDE ending at xi :eq} with
\begin{align*}
{X}^{\bar{u}}_{T} = \xi & = {x}_{0} {e}^{ \int_{0}^{T} {A}_{v} dv } 
                          + \int_{0}^{T} {e}^{ \int_{s}^{T} {A}_{v} dv } \tilde{\mathbb{E}} \Big[ \Big( {C}_{s} - \frac{ {B}_{s} }{ {D}_{s} } {F}_{s} \Big) \Big] ds \\
                  & \quad + \frac{1}{\gamma } \mathbb{E} \bigg[ \int_{0}^{T} \Big( \frac{ {B}_{s} }{ {D}_{s} } - 3K \mathfrak{M}^{ 0,2 }_{s} \Big) \frac{ {B}_{s} }{ {D}_{s} } ds \bigg]
                          + \frac{1}{\gamma } \int_{0}^{T} \Big( \frac{ {B}_{s} }{ {D}_{s} } - 3K \mathfrak{M}^{ 0,2 }_{s} \Big) d {W}_{s}.
\end{align*}
\end{example}

\section{Concluding remarks}
\label{sec: Concluding remark}

We studied time-inconsistent stochastic control problems with higher-order central moments on the basis of a fairly general controlled SDE.
We relaxed the assumptions employed in the existing studies, including the Lipschitz continuity and integrability conditions, 
and provided both the sufficiency and necessity of the equilibrium condition for an ONEC.
To arrive at the equilibrium condition, we studied a flow of linear BSDEs indexed by different initial epochs.
Exploiting the theory of BSDEs with $\mathbb{L}^{p}$-integrable data, we demonstrated the integrability of the diagonal processes generated by the flow of BSDEs.
Afterward, we established the equivalent condition for an ONEC, which is explicitly formulated by the diagonal processes and includes no limit.

In particular, we considered the case with linearly controlled SDEs to illustrate the applications to portfolio selection.
On the one hand, given deterministic linearity coefficients, we investigated the solvability of the problem under several scenarios, including the following.
\begin{itemize}
\item The objective function is linear in all higher-order central moments, in which case the solution can be derived by solving an ODE system.
\item The objective function is nonlinear but satisfies a certain condition, in which case the solution can be derived by solving a polynomial algebraic equation.
\item The objective function satisfies the so-called homogeneity condition, in which case the solution is the same as that of the mean-variance equilibrium strategy.
\end{itemize}
On the other hand, given the random linearity coefficients, we provided a characterization formulated by finitely many recursive BSDEs. 
In this approach, we obtain the analytical solution for mean-variance-skewness portfolio selection problems.
Since the solution is formulated by a quadratic BSDE, its existence and uniqueness immediately arises.

\appendix

\section{Proof of lemmas and theorems}
\label{app: Proof of lemmas and theorems}

\subsection{Proof of \texorpdfstring{\Cref{lem: actuarial estimate}}{Lemma 3.1}}
\label{pf-lem: actuarial estimate}

Following the same line of proof as that employed for \cite[Lemma 4.1]{Wang-Liu-Bensoussan-Yiu-Wei-2025}, let us introduce the linear SDE 
$d \mathcal{E}_{s} = \mathcal{E}_{s} | {\sigma }_{x} (s) |^{2} ds - \mathcal{E}_{s} ( {b}_{x} (s) ds + {\sigma }_{x} (s) d {W}_{s} )$ starting at $\mathcal{E}_{t} = 1$, which results in
\begin{equation*}
\mathcal{E}_{\tau } {y}^{ t, \varepsilon, \zeta }_{\tau } 
= \int_{t}^{\tau } {1}_{\{ s \in [ t, t + \varepsilon ) \}} \mathcal{E}_{s} {\delta }^{\zeta } \sigma (s) \big( d {W}_{s} - {\sigma }_{x} (s) ds \big)
= \mathcal{E}_{ \tau \wedge ( t + \varepsilon ) } {y}^{ t, \varepsilon, \zeta }_{ \tau \wedge ( t + \varepsilon ) }.
\end{equation*}
Since ${b}_{x}$ and ${\sigma }_{x}$ are uniformly bounded, using Doob's maximal inequality, we can show that
\begin{equation*}
\mathbb{E}_{t} \bigg[ \sup_{ s \in [ t,T ] } \Big| \frac{ \mathcal{E}_{s} }{ \mathcal{E}_{\tau } } \Big|^{p} \bigg] < \infty, \quad \forall \tau \in [ t,T ], ~ |p| > 1.
\end{equation*}
For any fixed $\tau \in [ t,T ]$, since $\frac{ \mathbb{E}_{\tau } [ \xi ] }{ \mathcal{E}_{\tau } } \in \mathbb{L}_{ \mathcal{F}_{\tau } }^{p} ( \Omega )$ with some $p \in ( 1,q )$,
we refer to \cite{Lin-1995} for the $\mathbb{L}^{p}$-martingale representation theorem and arrive at 
$\frac{ \mathbb{E}_{\tau } [ \xi ] }{ \mathcal{E}_{\tau } } = \mathbb{E}_{t} [ \frac{\xi }{ \mathcal{E}_{\tau } } ] + \int_{t}^{\tau } {\gamma }^{\tau }_{s} d {W}_{s}$ 
for some ${\gamma }^{\tau } \in \mathbb{L}_{\mathbb{F}}^{2} ( t, \tau; \mathbb{L}^{p} ( \Omega ) )$.
Consequently, there exists a $( t, \varepsilon, \zeta )$-independent constant $K > 0$ such that 
\begin{align*}
    \sup_{ \tau \in [ t,T ] } | \mathbb{E}_{t} [ \xi {y}^{ t, \varepsilon, \zeta }_{\tau } ] |
& = \sup_{ \tau \in [ t,T ] } \bigg| \mathbb{E}_{t} \bigg[ \mathbb{E}_{ \tau \wedge ( t + \varepsilon ) } \Big[ \frac{\xi }{ \mathcal{E}_{\tau } } \Big] \cdot 
                                                           \big( \mathcal{E}_{ \tau \wedge ( t + \varepsilon ) } {y}^{ t, \varepsilon, \zeta }_{ \tau \wedge ( t + \varepsilon ) } \big) \bigg] \bigg| \\
& = \sup_{ \tau \in [ t,T ] } \bigg| \mathbb{E}_{t} \bigg[ \int_{t}^{ \tau \wedge ( t + \varepsilon ) } {\gamma }^{\tau }_{s} \mathcal{E}_{s} {\delta }^{\zeta } \sigma (s) ds 
                                                         - \int_{t}^{ \tau \wedge ( t + \varepsilon ) } \mathbb{E}_{s} \Big[ \frac{\xi }{ \mathcal{E}_{\tau } } \Big] \mathcal{E}_{s} {\delta }^{\zeta } \sigma (s) {\sigma }_{x} (s) ds \bigg] \bigg| \\
& \le K | \zeta | \bigg( \bigg| \sup_{ \tau \in [ t,T ] } \mathbb{E}_{t} \bigg[ \Big( \int_{t}^{ \tau \wedge ( t + \varepsilon ) } | {\gamma }^{\tau }_{s} |^{2} ds \Big)^{ \frac{p}{2} } \bigg] \bigg|^{ \frac{1}{p} } 
                         \bigg| \mathbb{E}_{t} \bigg[ \sup_{ s \in [ t,T ] } | \mathcal{E}_{s} |^{ \frac{p}{p-1} } \bigg] \bigg|^{ \frac{p-1}{p} } {\varepsilon }^{ \frac{1}{2} } \\
& \qquad \qquad        + \mathbb{E}_{t} \bigg[ \sup_{ \tau \in [ t,T ] } \Big| \frac{\xi }{ \mathcal{E}_{\tau } } \Big| \cdot \sup_{ s \in [ t,T ] } | \mathcal{E}_{s} | \bigg] \varepsilon \bigg).
\end{align*}

Now, we show that $\mathbb{E}_{t} [ ( \int_{t}^{\tau } | {\gamma }^{\tau }_{s} |^{2} ds )^{ \frac{p}{2} } ]$ is uniformly bounded for all $\tau \in [ t,T ]$.
Let ${\gamma }^{\tau }_{s} = 0$ for $s > \tau $, leading to 
$\int_{t}^{\upsilon } {\gamma }^{\tau }_{s} d {W}_{s} = \mathbb{E}_{ \upsilon \wedge \tau } [ \frac{\xi }{ \mathcal{E}_{\tau } } ] - \mathbb{E}_{t} [ \frac{\xi }{ \mathcal{E}_{\tau } } ]$ for any $\tau, \upsilon \in [ t,T ]$. 
By the Burkholder-Davis-Gundy (BDG) inequality and Doob's maximal inequality, there exists a constant $K > 0$ that is independent of $( t, \varepsilon, \zeta )$ but relies on $p$ such that
\begin{equation*}
\mathbb{E}_{t} \bigg[ \Big( \int_{t}^{T} | {\gamma }^{\tau }_{s} |^{2} ds \Big)^{ \frac{p}{2} } \bigg] 
\le K \mathbb{E}_{t} \bigg[ \Big| \frac{\xi }{ \mathcal{E}_{\tau } } \Big|^{p} \bigg] 
\le K \mathbb{E}_{t} \bigg[ | \xi |^{p} \sup_{ s \in [ t,T ] } | \mathcal{E}_{s} |^{-p} \bigg] < \infty.
\end{equation*}
Notably, by applying It\^o's rule to $\{ | \mathbb{E}_{s} [ \frac{\xi }{ \mathcal{E}_{\tau } } ] |^{2} \}_{ s \in [ t, \tau ] }$ and using the BDG inequality and H\"older's inequality, we can further show the uniform integrability property.
\begin{equation*}
\mathbb{E}_{t} \bigg[ \sup_{ \tau \in [ t,T ] } \Big( \int_{t}^{T} | {\gamma }^{\tau }_{s} |^{2} ds \Big)^{ \frac{p}{2} } \bigg] 
\le K \mathbb{E}_{t} \bigg[ | \xi |^{p} \sup_{ s \in [ t,T ] } | \mathcal{E}_{s} |^{-p} \bigg] < \infty.
\end{equation*}
On the other hand, we show that $\mathbb{E}_{t} [ ( \int_{t}^{T} | {\gamma }^{\tau }_{s} |^{2} ds )^{ \frac{p}{2} } ]$ is continuous in $\tau \in [ t,T ]$.
By the BDG inequality, Minkowski's inequality and Doob's maximal inequality, we have
\begin{equation*}
\bigg( \mathbb{E}_{t} \bigg[ \Big( \int_{t}^{T} | {\gamma }^{\tau }_{s} - {\gamma }^{\upsilon }_{s} |^{2} ds \Big)^{ \frac{p}{2} } \bigg] \bigg)^{ \frac{1}{p} }
\le K \bigg( \mathbb{E}_{t} \bigg[ \Big| \frac{ \mathbb{E}_{\tau } [ \xi ] }{ \mathcal{E}_{\tau } } - \frac{ \mathbb{E}_{\upsilon } [ \xi ] }{ \mathcal{E}_{\upsilon } } \Big|^{p} \bigg] \bigg)^{ \frac{1}{p} }
\le \frac{2Kp}{p-1} \mathbb{E}_{t} \bigg[ | \xi |^{p} \sup_{ s \in [ t,T ] } | \mathcal{E}_{s} |^{-p} \bigg] < \infty
\end{equation*}
for any $\tau, \upsilon \in [ t,T ]$. Based on Minkowski's inequality and the dominated convergence theorem, we know that
\begin{equation*}
\bigg| \bigg( \mathbb{E}_{t} \bigg[ \Big( \int_{t}^{T} | {\gamma }^{\tau }_{s} |^{2} ds \Big)^{ \frac{p}{2} } \bigg] \bigg)^{ \frac{1}{p} }
     - \bigg( \mathbb{E}_{t} \bigg[ \Big( \int_{t}^{T} | {\gamma }^{\upsilon }_{s} |^{2} ds \Big)^{ \frac{p}{2} } \bigg] \bigg)^{ \frac{1}{p} } \bigg|
\le \bigg( \mathbb{E}_{t} \bigg[ \Big( \int_{t}^{T} | {\gamma }^{\tau }_{s} - {\gamma }^{\upsilon }_{s} |^{2} ds \Big)^{ \frac{p}{2} } \bigg] \bigg)^{ \frac{1}{p} } \to 0
\end{equation*}
as $| \tau - \upsilon | \downarrow 0$.
In summary, in conjunction with using the Weierstrass theorem and the dominated convergence theorem, we obtain
\begin{equation*}
  \lim_{ \varepsilon \downarrow 0 } \sup_{ \tau \in [ t,T ] } \mathbb{E}_{t} \bigg[ \Big( \int_{t}^{ \tau \wedge ( t + \varepsilon ) } | {\gamma }^{\tau }_{s} |^{2} ds \Big)^{ \frac{p}{2} } \bigg]
= \mathbb{E}_{t} \bigg[ \Big( \lim_{ \varepsilon \downarrow 0 } \int_{t}^{ t + \varepsilon } | {\gamma }^{\upsilon }_{s} |^{2} ds \Big)^{ \frac{p}{2} } \bigg] 
= 0, \quad \exists \upsilon \in [ t,T ].
\end{equation*}
Therefore, $\frac{1}{\varepsilon } \sup_{ \tau \in [ t,T ] } | \mathbb{E}_{t} [ \xi {y}^{ t, \varepsilon, \zeta }_{\tau } ] |^{2} \to 0$ as $\varepsilon \downarrow 0$, namely, 
$\sup_{ \tau \in [ t,T ] } | \mathbb{E}_{t} [ \xi {y}^{ t, \varepsilon, \zeta }_{\tau } ] |^{2} = o ( \varepsilon )$.

In terms of the asymptotic expansion of ${J}^{t} ( \bar{u}^{ t, \varepsilon, \zeta } )$,
one can mirror the proof of \cite[Lemma 4.3]{Wang-Liu-Bensoussan-Yiu-Wei-2025} with the use of the derived estimate and the fact that all ${\varphi }^{t,u}_{j}$ with any fixed $t \in [ 0,T )$ and $u \in \mathcal{U}$ are $\mathcal{F}_{t}$-measurable and bounded. 
So we omit the details to shorten the length of this paper.

\subsection{Proof of \texorpdfstring{\Cref{lem: integrabiity and uniqueness of diagonal process}}{Lemma 3.3}}
\label{pf-lem: integrabiity and uniqueness of diagonal process}

Fix $t \in [ 0,T )$. 
Obviously, ${Y}^{t,u}_{T}, {Z}^{t,u}_{T} \in \mathbb{L}_{\mathcal{F}_{T}}^{ 1 + \delta } ( \Omega )$ holds for a.e. $t \in [ 0,T )$ because of \Cref{ass: uniform integrabiity condition with continuity}.
Let us introduce the equivalent martingale measures $\mathbb{P}^{(i)}$ with $i = 1,2$,
under which $\{ {W}^{(i)}_{t} := {W}_{t} - i \int_{0}^{t} {\sigma }_{x} (s) ds \}_{ t \in [ 0,T ] }$ is a one-dimensional standard Brownian motion:
\begin{equation*}
\frac{ d \mathbb{P}^{(i)} }{ d \mathbb{P} } \Big|_{ \mathcal{F}_{T} } = {e}^{ i \int_{0}^{T} {\sigma }_{x} (v) d {W}_{v} - \frac{1}{2} {i}^{2} \int_{0}^{T} | {\sigma }_{x} (v) |^{2} dv }.
\end{equation*}
Let $\mathbb{E}^{(i)}$ be the expectation operator for $\mathbb{P}^{(i)}$, and let $\mathbb{E}^{(i)}_{t} [ \cdot ] := \mathbb{E}^{(i)} [ \cdot | \mathcal{F}_{t} ]$.
Then, 
\begin{equation}\label{eq: BSDEs solution :eq}
\left\{ \begin{aligned}
& {Y}^{t,u}_{s} = \mathbb{E}^{(1)}_{s} \Big[ {e}^{ \int_{s}^{T} {b}_{x} (v) dv } {Y}^{t,u}_{T} \Big], \\ 
& {Z}^{t,u}_{s} = \mathbb{E}^{(2)}_{s} \bigg[ {e}^{ \int_{s}^{T} ( 2 {b}_{x} (v) + | {\sigma }_{x} (v) |^{2} ) dv } {Z}^{t,u}_{T} 
                                                     + \int_{s}^{T} {e}^{ \int_{s}^{\tau } ( 2 {b}_{x} (v) + | {\sigma }_{x} (v) |^{2} ) dv } 
                                                                    \big( {b}_{xx} ( \tau ) {Y}^{t,u}_{\tau } + {\sigma }_{xx} ( \tau ) \mathcal{Y}^{t,u}_{\tau } \big) d \tau \bigg],
\end{aligned} \right.
\end{equation}
up to indistinguishability on $( s, \omega ) \in [ 0,T ] \times \Omega $.
For notational simplicity, we let  
\begin{align*}
{P}^{k,u}_{t} 
& := \bigg( 1 + \gamma \mathbb{E}_{t} [ {X}^{u}_{T} ] + \sum_{j=2}^{2n} j \mathbb{E}_{t} \big[ ( {X}^{u}_{T} - \mathbb{E}_{t} [ {X}^{u}_{T} ] )^{j-1} - ( - \mathbb{E}_{t} [ {X}^{u}_{T} ] )^{j-1} \big] {\varphi }_{j}^{t,u} \bigg) {1}_{\{ k = 0 \}} \\
& \quad + \gamma {1}_{\{ k = 1 \}} 
        - {1}_{\{ k \ge 1 \}}  \sum_{j=k+1}^{2n} j \binom{j-1}{k} ( - \mathbb{E}_{t} [ {X}^{u}_{T} ] )^{j-1-k} {\varphi }_{j}^{t,u}
\end{align*}
and ${M}^{k,u}_{s} := \mathbb{E}^{(1)}_{s} [ {e}^{ \int_{0}^{T} {b}_{x} (v) dv } ( {X}^{u}_{T} )^{k} ]$.
Thus, plugging the terminal value ${Y}^{t,u}_{T}$ given by \cref{eq: BSDEs :eq} into \cref{eq: BSDEs solution :eq} yields
\begin{equation}\label{eq: polynomial expression of Y :eq}
{Y}^{t,u}_{s} = {e}^{ - \int_{0}^{s} {b}_{x} (v) dv } \sum_{k=0}^{2n-1} {P}^{k,u}_{t} {M}^{k,u}_{s},
\end{equation}
up to indistinguishability on $[ t,T ] \times \Omega $.
Moreover, for every integer $k \in [ 0, 2n-1 ]$, 
it follows from \Cref{ass: uniform integrabiity condition with continuity}, or more precisely, 
${\varphi }_{j}^{ \cdot, u } \in \mathbb{L}_{\mathbb{F},loc}^{ \frac{ 2 ( 1 + \delta ) }{ 1 - \delta } } ( 0,T; \mathbb{L}^{ \frac{ 2 n ( 1 + \delta ) }{ 2 n - ( j - 1 - k + k \vee n ) ( 1 + \delta ) } } ( \Omega ) )$ for $j \ge k + 1$, that 
${P}^{k,u} \in \mathbb{L}_{\mathbb{F},loc}^{ \frac{ 2 ( 1 + \delta ) }{ 1 - \delta } } ( 0,T; \mathbb{L}^{ \frac{ 2n ( 1 + \delta ) }{ 2n - ( k \vee n ) ( 1 + \delta ) } } ( \Omega ) )
       \subset \mathbb{L}_{\mathbb{F},loc}^{ \frac{ 2 ( 1 + \delta ) }{ 1 - \delta } } ( 0,T; \mathbb{L}^{ \frac{ 2n ( 1 + \delta ) }{ 2n - k ( 1 + \delta ) } } ( \Omega ) )$.
 Since ${M}^{k,u} \in {C}_{\mathbb{F}} ( 0,T; \mathbb{L}^{ \frac{2n}{k} } ( \Omega ) )$ for $k \in [ 1, 2n-1 ]$ and ${M}^{0,u}$ is essentially bounded,
we know that $\{ {Y}^{t,u}_{t} \}_{ t \in [ 0,T ) } \in \mathbb{L}_{\mathbb{F},loc}^{ \frac{ 2 ( 1 + \delta ) }{ 1 - \delta } } ( 0,T; \mathbb{L}^{ 1 + \delta } ( \Omega ) )$.
Similarly, we can write $\hat{Y}^{t,u}_{s} = {e}^{ - \int_{0}^{s} {b}_{x} (v) dv } \sum_{k=0}^{2n-1} \hat{P}^{k,u}_{t} \hat{M}^{k,u}_{s}$,
where each pair $( \hat{P}^{k,u}, \hat{M}^{k,u} )$ is an indistinguishable version of $( {P}^{k,u}, {M}^{k,u} )$.
Therefore, ${Y}^{t,u}_{t} = \hat{Y}^{t,u}_{t}$, $\mathbb{P}$-a.s., a.e. $t \in [ 0,T )$.

Next, we investigate the integrability and uniqueness of $\{ \mathcal{Y}^{t,u}_{t} \}_{ t \in [ 0,T ) }$.
For $u \in \mathcal{U}$ and $k = 1, 2, \ldots, 2n-1$,
due to the $\mathbb{L}^{p}$-martingale representation obtained under $\mathbb{P}^{(1)}$ (see \cite{Lin-1995}), 
there exists a unique ${\xi }^{k,u} \in \mathbb{L}_{\mathbb{F}}^{2} ( 0,T; \mathbb{L}^{ \frac{2n}{k} } ( \Omega ) )$ such that
\begin{equation}\label{eq: Lp-martingale representation :eq}
{e}^{ \int_{0}^{T} {b}_{x} (v) dv } ( {X}^{u}_{T} )^{k} = \mathbb{E}^{(1)} \Big[ {e}^{ \int_{0}^{T} {b}_{x} (v) dv } ( {X}^{u}_{T} )^{k} \Big] + \int_{0}^{T} {e}^{ \int_{0}^{s} {b}_{x} (v) dv } {\xi }^{k,u} ( d {W}_{s} - {I}_{s} ds ).
\end{equation}
In particular, for $k=0$, the above representation \cref{eq: Lp-martingale representation :eq} also holds, with ${\xi }^{0,u} \in \mathbb{L}_{\mathbb{F}}^{2} ( 0,T; \mathbb{L}^{p} ( \Omega ) )$ for any $p > 1$.
 Owing to \cref{eq: BSDEs :eq}, \cref{eq: Lp-martingale representation :eq} and \cref{eq: polynomial expression of Y :eq},
\begin{equation}\label{eq: polynomial expression of calY :eq}
\mathcal{Y}^{t,u}_{s} = {e}^{ - \int_{0}^{s} {b}_{x} (v) dv } \sum_{k=0}^{2n-1} {P}^{k,u}_{t} {\xi }^{k,u}_{s}, \quad a.e. ~ s \in [ 0,T ], ~ \mathbb{P}\text{-}a.s.
\end{equation}
Then, $\{ \mathcal{Y}^{s,u}_{s} \}_{ s \in [ 0,T ) } \in \mathbb{L}_{\mathbb{F},loc}^{ 1 + \delta } ( 0,T; \mathbb{L}^{ 1 + \delta - \rho } ( \Omega ) )$ follows from
\begin{align*}
& \mathbb{E} \bigg[ \int_{0}^{\tau } | {P}^{k,u}_{t} {\xi }^{k,u}_{t} |^{ 1 + \delta } dt \bigg] \\
& \le \mathbb{E} \bigg[ \Big( \int_{0}^{\tau } | {P}^{k,u}_{t} |^{ \frac{ 2 ( 1 + \delta ) }{ 1 - \delta } } dt \Big)^{ \frac{ 1 - \delta }{2} }  
                        \Big( \int_{0}^{\tau } | {\xi }^{k,u}_{t} |^{2} dt \Big)^{ \frac{ 1 + \delta }{2} } \bigg] \\
& \le \bigg( \mathbb{E} \bigg[ \Big( \int_{0}^{\tau } | {P}^{k,u}_{t} |^{ \frac{ 2 ( 1 + \delta ) }{ 1 - \delta } } dt \Big)^{ \frac{ n ( 1 - \delta ) }{ 2n - k ( 1 + \delta ) } } \bigg] \bigg)^{ \frac{ 2n - k ( 1 + \delta ) }{2n} }
      \bigg( \mathbb{E} \bigg[ \Big( \int_{0}^{\tau } | {\xi }^{k,u}_{t} |^{2} dt \Big)^{ \frac{n}{k} } \bigg] \bigg)^{ \frac{ k ( 1 + \delta ) }{2n} }
\end{align*}
for $k \ge 1$ and
\begin{align*}
& \mathbb{E} \bigg[ \bigg( \int_{0}^{\tau } | {P}^{0,u}_{t} {\xi }^{0,u}_{t} |^{ 1 + \delta } dt \bigg)^{ \frac{ 1 + \delta - \rho }{ 1 + \delta } } \bigg] \\
& \le \mathbb{E} \bigg[ \Big( \int_{0}^{\tau } | {P}^{0,u}_{t} |^{ \frac{ 2 ( 1 + \delta ) }{ 1 - \delta } } dt \Big)^{ \frac{ ( 1 - \delta ) ( 1 + \delta - \rho ) }{ 2 ( 1 + \delta ) } }  
                        \Big( \int_{0}^{\tau } | {\xi }^{0,u}_{t} |^{2} dt \Big)^{ \frac{ 1 + \delta - \rho }{2} } \bigg] \\
& \le \bigg( \mathbb{E} \bigg[ \Big( \int_{0}^{\tau } | {P}^{0,u}_{t} |^{ \frac{ 2 ( 1 + \delta ) }{ 1 - \delta } } dt \Big)^{ \frac{ 1 - \delta }{2} } \bigg] \bigg)^{ \frac{ 1 + \delta - \rho }{ 1 + \delta } }
      \bigg( \mathbb{E} \bigg[ \Big( \int_{0}^{\tau } | {\xi }^{0,u}_{t} |^{2} dt \Big)^{ \frac{ ( 1 + \delta - \rho ) ( 1 + \delta ) }{ 2 \rho } } \bigg] \bigg)^{ \frac{\rho }{ 1 + \delta } }.
\end{align*}
Similarly, we can write $\hat{\mathcal{Y}}^{t,u}_{s} = {e}^{ \int_{s}^{T} {A}_{v} dv } \sum_{k=0}^{2n-1} \hat{P}^{k,u}_{t} \hat{\xi }^{k,u}_{s}$,
where $\hat{P}^{k,u}$ is an indistinguishable version of ${P}^{k,u}$,
and $\mathbb{E} [ ( \int_{0}^{T} | {\xi}^{k,u}_{s} - \hat{\xi }^{k,u}_{s} |^{2} ds )^{ \frac{n}{k} } ] = 0$ for $k \ge 1$ and
$\mathbb{E} [ ( \int_{0}^{T} | {\xi}^{0,u}_{s} - \hat{\xi }^{0,u}_{s} |^{2} ds )^{ \frac{p}{2} } ] = 0$ for any $p > 1$ because of the uniqueness of the solution for \cref{eq: Lp-martingale representation :eq} as a BSDE.
Then, for any fixed $\tau \in [ 0,T )$ and some constant $K > 0$, by H\"older's inequality,
\begin{align*}
& \mathbb{E} \bigg[ \int_{0}^{\tau } | \mathcal{Y}^{s,u}_{t} - \hat{\mathcal{Y}}^{s,u}_{t} | dt \bigg] \\
& \le K \mathbb{E} \bigg[ \Big( \int_{0}^{\tau } | {P}^{0,u}_{t} |^{2} dt \cdot \int_{0}^{\tau } | {\xi }^{0,u}_{t} - \hat{\xi }^{0,u}_{t} |^{2} dt \Big)^{ \frac{1}{2} } \bigg] \\
& \quad + K \sum_{k=1}^{2n-1} \bigg( \mathbb{E} \bigg[ \Big( \int_{0}^{\tau } | {P}^{k,u}_{t} |^{2} dt \cdot 
                                                         \int_{0}^{\tau } | {\xi }^{k,u}_{t} - \hat{\xi }^{k,u}_{t} |^{2} dt \Big)^{ \frac{ 1 + \delta }{2} } \bigg] \bigg)^{ \frac{1}{ 1 + \delta } } \\
& \le K \bigg( \mathbb{E} \bigg[ \Big( \int_{0}^{\tau } | {P}^{0,u}_{t} |^{2} dt \Big)^{ \frac{ 1 + \delta }{2} } \bigg] \bigg)^{ \frac{1}{ 1 + \delta } }
        \bigg( \mathbb{E} \bigg[ \Big( \int_{0}^{\tau } | {\xi }^{0,u}_{t} - \hat{\xi }^{0,u}_{t} |^{2} dt \Big)^{ \frac{ 1 + \delta }{ 2 \delta } } \bigg] \bigg)^{ \frac{\delta }{ 1 + \delta }  } \\
& \quad + K \sum_{k=1}^{2n-1} \bigg( \mathbb{E} \bigg[ \Big( \int_{0}^{\tau } | {P}^{k,u}_{t} |^{2} dt \Big)^{ \frac{ n ( 1 + \delta ) }{ 2n - k ( 1 + \delta ) } } \bigg] \bigg)^{ \frac{ 2n - k ( 1 + \delta ) }{ 2n ( 1 + \delta ) } }
                              \bigg( \mathbb{E} \bigg[ \Big( \int_{0}^{T} | {\xi }^{k,u}_{t} - \hat{\xi }^{k,u}_{t} |^{2} dt \Big)^{ \frac{n}{k} } \bigg] \bigg)^{ \frac{k}{2n} } \\
& = 0.
\end{align*}
Therefore, $\mathcal{Y}^{t,u}_{t} = \hat{\mathcal{Y}}^{t,u}_{t}$, $\mathbb{P}$-a.s., a.e. $t \in [ 0,T )$.

In the same manner, we can show the desired results for ${Z}^{t,u}$
by ${\varphi }^{ \cdot,u}_{j} \in \mathbb{L}_{\mathbb{F},loc}^{ \frac{ 2 ( 1 + \delta ) }{ 1 - \delta } } ( 0,T; \mathbb{L}^{ \frac{ 2 n ( 1 + \delta ) }{ 2 n - ( j - 2 ) ( 1 + \delta ) } } ( \Omega ) )$ 
due to \Cref{ass: uniform integrabiity condition with continuity}.
In fact, we can write
\begin{align}\label{eq: polynomial expression of Z :eq}
{Z}^{t,u}_{s} & = {e}^{ - \int_{0}^{s} ( 2 {b}_{x} (v) + | {\sigma }_{x} (v) |^{2} ) dv } 
                  \sum_{k=0}^{2n-2} {Q}^{k,u}_{t} {N}^{k,u}_{s} \\
              & + {e}^{ - \int_{0}^{s} ( 2 {b}_{x} (v) + | {\sigma }_{x} (v) |^{2} ) dv } 
                  \sum_{k=0}^{2n-1} {P}^{k,u}_{t} \mathbb{E}^{(2)}_{s} \bigg[ \int_{s}^{T} {e}^{ \int_{0}^{\tau } ( {b}_{x} (v) + | {\sigma }_{x} (v) |^{2} ) dv } {b}_{xx} ( \tau ) {M}^{k,u}_{\tau } d \tau \bigg] \notag \\
              & + {e}^{ - \int_{0}^{s} ( 2 {b}_{x} (v) + | {\sigma }_{x} (v) |^{2} ) dv } 
                  \sum_{k=0}^{2n-1} {P}^{k,u}_{t} \mathbb{E}^{(2)}_{s} \bigg[ \int_{s}^{T} {e}^{ \int_{0}^{\tau } ( {b}_{x} (v) + | {\sigma }_{x} (v) |^{2} ) dv } {\sigma }_{xx} ( \tau ) {\xi }^{k,u}_{\tau } d \tau \bigg] \notag
\end{align}
up to indistinguishability on $[ t,T ] \times \Omega $,
where ${N}^{k,u}_{s} := \mathbb{E}^{(2)}_{s} [ {e}^{ \int_{0}^{T} ( 2 {b}_{x} (v) + | {\sigma }_{x} (v) |^{2} ) dv } ( {X}^{u}_{T} )^{k} ]$ and 
\begin{equation*}
{Q}^{k,u}_{t} := - \gamma {1}_{\{ k = 1 \}} 
                 - \sum_{j=k+2}^{2n} j (j-1) \binom{j-2}{k} ( - \mathbb{E}_{t} [ {X}^{u}_{T} ] )^{j-2-k} {\varphi }_{j}^{t,u}. 
\end{equation*}
Since ${b}_{x}, {\sigma }_{x}, {b}_{xx}, {\sigma }_{xx}$ are bounded, 
we can obtain $\{ {Z}^{t,u}_{t} \}_{ t \in [ 0,T ) } \in \mathbb{L}_{\mathbb{F},loc}^{ 1 + \delta } ( 0,T; \mathbb{L}^{ 1 + \delta - \rho } ( \Omega ) )$ by combining the following integrability properties:
\begin{itemize}
\item $\{ {Q}^{k,u}_{t} {N}^{k,u}_{t} \}_{ t \in [ 0,T ) } \in \mathbb{L}_{\mathbb{F},loc}^{ \frac{ 2 ( 1 + \delta ) }{ 1 - \delta } } ( 0,T; \mathbb{L}^{ 1 + \delta } ( \Omega ) )$ arising from
      ${N}^{k,u} \in {C}_{\mathbb{F}} ( 0,T; \mathbb{L}^{ \frac{2n}{k} } ( \Omega ) )$ for $k \in [ 1, 2n-2 ]$, the essential boundedness of ${N}^{0,u}$
      and ${Q}^{k,u} \in \mathbb{L}_{\mathbb{F},loc}^{ \frac{ 2 ( 1 + \delta ) }{ 1 - \delta } } ( 0,T; \mathbb{L}^{ \frac{ 2n ( 1 + \delta ) }{ 2n - k ( 1 + \delta ) } } ( \Omega ) )$;
\item $\{ {P}^{k,u}_{t} \mathbb{E}_{t} [ \int_{t}^{T} | {M}^{k,u}_{s} | ds ] \}_{ t \in [ 0,T ) } \in \mathbb{L}_{\mathbb{F},loc}^{ \frac{ 2 ( 1 + \delta ) }{ 1 - \delta } } ( 0,T; \mathbb{L}^{ 1 + \delta } ( \Omega ) )$ arising from
      ${M}^{k,u} \in {C}_{\mathbb{F}} ( 0,T; \mathbb{L}^{ \frac{2n}{k} } ( \Omega ) )$ for $k \in [ 1, 2n-1 ]$, the essential boundedness of ${M}^{0,u}$
      and ${P}^{k,u} \in \mathbb{L}_{\mathbb{F},loc}^{ \frac{ 2 ( 1 + \delta ) }{ 1 - \delta } } ( 0,T; \mathbb{L}^{ \frac{ 2n ( 1 + \delta ) }{ 2n - k ( 1 + \delta ) } } ( \Omega ) )$;
\item $\{ {P}^{k,u}_{t} \mathbb{E}_{t} [ \int_{t}^{T} | {\xi }^{k,u}_{s} | ds ] \}_{ t \in [ 0,T ) } \in \mathbb{L}_{\mathbb{F},loc}^{ 1 + \delta } ( 0,T; \mathbb{L}^{ 1 + \delta } ( \Omega ) )$ arising from
      the abovementioned integrability of ${P}^{k,u}$ 
      and $\{ \int_{t}^{T} | {\xi }^{k,u}_{s} | ds \}_{ t \in [ 0,T ] } \in \mathbb{L}_{\mathbb{F}}^{2} ( 0,T; \mathbb{L}^{ \frac{2n}{k} } ( \Omega ) )$, given that $k \ne 0$;
\item $\{ {P}^{0,u}_{t} \mathbb{E}_{t} [ \int_{t}^{T} | {\xi }^{0,u}_{s} | ds ] \}_{ t \in [ 0,T ) } \in \mathbb{L}_{\mathbb{F},loc}^{ 1 + \delta } ( 0,T; \mathbb{L}^{ 1 + \delta - \rho } ( \Omega ) )$ arising from
      the abovementioned integrability of ${P}^{0,u}$ 
      and $\{ \int_{t}^{T} | {\xi }^{0,u}_{s} | ds \}_{ t \in [ 0,T ] } \in \mathbb{L}_{\mathbb{F}}^{2} ( 0,T; \mathbb{L}^{p} ( \Omega ) )$ for any $p > 1$.
\end{itemize}
In addition, one can conclude that $\{ \hat{Z}^{t,u}_{t} \}_{ t \in [ 0,T ) }$ and $\{ {Z}^{t,u}_{t} \}_{ t \in [ 0,T ) }$ have the same representation up to indistinguishability on $[ t,T ] \times \Omega $; 
therefore, ${Z}^{t,u}_{t} = \hat{Z}^{t,u}_{t}$, $\mathbb{P}$-a.s., a.e. $t \in [ 0,T )$.
Hence, this proof is complete.

\subsection{Proof of \texorpdfstring{\Cref{thm: maximum principle}}{Theorem 4.1}}
\label{pf-thm: maximum principle}

By plugging the expressions \cref{eq: polynomial expression of Y :eq}, \cref{eq: polynomial expression of calY :eq} and \cref{eq: polynomial expression of Z :eq} of $( {Y}^{ t, \bar{u} }_{s}, \mathcal{Y}^{ t, \bar{u} }_{s}, {Z}^{ t, \bar{u} }_{s} )$ 
into \cref{eq: spike perturbation :eq}, one obtains
\begin{align*}
& {J}^{t} ( \bar{u}^{ t, \varepsilon, \zeta } ) - {J}^{t} ( \bar{u} ) \\
& = \mathbb{E}_{t} \bigg[ \int_{t}^{ t + \varepsilon } \Big( {Y}^{ s, \bar{u} }_{s} {\delta }^{\zeta } b (s) + \mathcal{Y}^{ s, \bar{u} }_{s} {\delta }^{\zeta } \sigma (s) 
                                                           + \frac{1}{2} {Z}^{ s, \bar{u} }_{s} | {\delta }^{\zeta } \sigma (s) |^{2} \Big) ds \bigg] \\
& \quad + \sum_{k=0}^{2n-1} \mathbb{E}_{t} \bigg[ \int_{t}^{ t + \varepsilon } {e}^{ - \int_{0}^{s} {b}_{x} (v) dv } ( {P}^{ k, \bar{u} }_{t} - {P}^{ k, \bar{u} }_{s} ) 
                                                                               \big( {M}^{ k, \bar{u} }_{s} {\delta }^{\zeta } b (s) + {\xi }^{ k, \bar{u} }_{s} {\delta }^{\zeta } \sigma (s) \big) ds \bigg] \\
& \quad + \frac{1}{2} \sum_{k=0}^{2n-2} \mathbb{E}_{t} \bigg[ \int_{t}^{ t + \varepsilon } {e}^{ - \int_{0}^{s} ( 2 {b}_{x} (v) + | {\sigma }_{x} (v) |^{2} ) dv } 
                                                                                           ( {Q}^{ k, \bar{u} }_{t} - {Q}^{ k, \bar{u} }_{s} ) {N}^{ k, \bar{u} }_{s} | {\delta }^{\zeta } \sigma (s) |^{2} ds \bigg] \\
& \quad + \frac{1}{2} \sum_{k=0}^{2n-1} \mathbb{E}_{t} \bigg[ \int_{t}^{ t + \varepsilon } {e}^{ - \int_{0}^{s} ( 2 {b}_{x} (v) + | {\sigma }_{x} (v) |^{2} ) dv } 
                                                                                           ( {P}^{ k, \bar{u} }_{t} - {P}^{ k, \bar{u} }_{s} ) ( {\lambda }^{ k, \bar{u} }_{s} + {\mu }^{ k, \bar{u} }_{s} ) | {\delta }^{\zeta } \sigma (s) |^{2} ds \bigg] 
        + o ( \varepsilon ),
\end{align*}
where
\begin{align*}
& {\lambda }^{ k, \bar{u} }_{s} := \mathbb{E}^{(2)}_{s} \bigg[ \int_{s}^{T} {e}^{ \int_{0}^{\tau } ( {b}_{x} (v) + | {\sigma }_{x} (v) |^{2} ) dv } {b}_{xx} ( \tau ) {M}^{ k, \bar{u} }_{\tau } d \tau \bigg], \\
& {\mu }^{ k, \bar{u} }_{s} := \mathbb{E}^{(2)}_{s} \bigg[ \int_{s}^{T} {e}^{ \int_{0}^{\tau } ( {b}_{x} (v) + | {\sigma }_{x} (v) |^{2} ) dv } {\sigma }_{xx} ( \tau ) {\xi }^{ k, \bar{u} }_{\tau } d \tau \bigg].
\end{align*}
Moreover, according to the proof of \Cref{lem: integrabiity and uniqueness of diagonal process},
${\lambda }^{ k, \bar{u} } \in {C}_{\mathbb{F}} ( 0,T; \mathbb{L}^{ \frac{2n}{k} } ( \Omega ) )$ and 
${\mu }^{ k, \bar{u} } \in \mathbb{L}_{\mathbb{F}}^{2} ( 0,T; \mathbb{L}^{ \frac{2n}{k} } ( \Omega ) )$ for $k \ge 1$, 
${\mu }^{ 0, \bar{u} } \in \mathbb{L}_{\mathbb{F}}^{2} ( 0,T; \mathbb{L}^{p} ( \Omega ) )$ for any $p > 1$,
and ${\lambda }^{ 0, \bar{u} }$ is essentially bounded.
Thus, $( {\lambda }^{ k, \bar{u} }, {\mu }^{ k, \bar{u} } )$ has the same continuity and integrability properties as $( {M}^{ k, \bar{u} }, {\xi }^{ k, \bar{u} } )$ does.
Similarly, $( {Q}^{ k, \bar{u} }, {N}^{ k, \bar{u} } )$ also has the same continuity and integrability properties as $( {P}^{ k, \bar{u} }, {M}^{ k, \bar{u} } )$ does,
while ${M}^{ k, \bar{u} }$ has stronger integrability properties than ${\xi }^{ k, \bar{u} }$ does.
Therefore, to obtain \cref{eq: spike perturbation modified :eq} under the Lipschitz continuity condition \cref{eq: Lipschitz continuity :eq} (i.e., $| {b}_{x} | \le K$, $| {\delta }^{\zeta } b (s) | \le K | \zeta |$, etc.), 
it indeed suffices to show that
\begin{equation*}
\mathbb{E}_{t} \bigg[ \int_{t}^{ t + \varepsilon } | ( {P}^{ k, \bar{u} }_{t} - {P}^{ k, \bar{u} }_{s} ) {\xi }^{ k, \bar{u} }_{s} | ds \bigg] = o ( \varepsilon ).
\end{equation*}

Since $\frac{2n}{ k \vee n } \in ( 1,2 ]$, by the generalized version of Minkowski's inequality, we know that
\begin{equation*}
\bigg( \int_{0}^{T} \big( \mathbb{E} [ | {\xi }^{ k, \bar{u} }_{s} |^{ \frac{2n}{ k \vee n } } ] \big)^{ \frac{ k \vee n }{n} } ds \bigg)^{ \frac{1}{2} }
\le \bigg( \mathbb{E} \bigg[ \Big( \int_{0}^{T} | {\xi }^{ k, \bar{u} }_{s} |^{2} ds \Big)^{ \frac{n}{ k \vee n } } \bigg] \bigg)^{ \frac{ k \vee n }{2n} } < \infty,
\end{equation*}
which implies that $\mathbb{E} [ | {\xi }^{ k, \bar{u} }_{t} |^{ \frac{2n}{ k \vee n } } ] < \infty $ for a.e. $t \in [ 0,T ]$.
Then, according to H\"older's inequality, by rearranging the terms, one obtains
\begin{align*}
& \mathbb{E} \bigg[ \limsup_{ \varepsilon \downarrow 0 } \frac{1}{\varepsilon } \int_{t}^{ t + \varepsilon } | ( {P}^{ k, \bar{u} }_{t} - {P}^{ k, \bar{u} }_{s} ) {\xi }^{ k, \bar{u} }_{s} | ds \bigg] \\
& \le \bigg( \mathbb{E} \bigg[ \Big( \limsup_{ \varepsilon \downarrow 0 } 
                                     \frac{1}{\varepsilon } \int_{t}^{ t + \varepsilon } | {P}^{ k, \bar{u} }_{t} - {P}^{ k, \bar{u} }_{s} |^{2} ds \Big)^{ \frac{n}{ 2n - k \vee n } } \bigg] \bigg)^{ \frac{ 2n - k \vee n }{2n} } \\
& \quad \times K \bigg( \mathbb{E} \bigg[ | {\xi }^{ k, \bar{u} }_{t} |^{ \frac{2n}{ k \vee n } }
                                        + \Big( \limsup_{ \varepsilon \downarrow 0 } 
                                                \frac{1}{\varepsilon } \int_{t}^{ t + \varepsilon } \big| | {\xi }^{ k, \bar{u} }_{s} |^{2} - | {\xi }^{ k, \bar{u} }_{t} |^{2} \big| ds \Big)^{ \frac{n}{ k \vee n } } \bigg] \bigg)^{ \frac{ k \vee n }{2n} }
\end{align*}
for some constant $K > 0$ that only relies on $n$.
On the one hand, $\int_{0}^{T} | {\xi }^{ k, \bar{u} }_{s} |^{2} ds < \infty $, $\mathbb{P}$-a.s.,
follows from ${\xi }^{ k, \bar{u} } \in \mathbb{L}_{\mathbb{F}}^{2} ( 0,T; \mathbb{L}^{ \frac{2n}{k} } ( \Omega ) )$ for $k \ge 1$ 
and ${\xi }^{ 0, \bar{u} } \in \mathbb{L}_{\mathbb{F}}^{2} ( 0,T; \mathbb{L}^{p} ( \Omega ) )$ with any fixed $p > 1$.
Therefore, applying the Lebesgue differentiation theorem to $| {\xi }^{ k, \bar{u} } |^{2}$ yields
\begin{equation*}
\limsup_{ \varepsilon \downarrow 0 } \frac{1}{\varepsilon } \int_{t}^{ t + \varepsilon } \big| | {\xi }^{ k, \bar{u} }_{s} |^{2} - | {\xi }^{ k, \bar{u} }_{t} |^{2} \big| ds = 0, \quad a.e. ~ t \in [ 0,T ], ~ \mathbb{P}\text{-}a.s.
\end{equation*}
On the other hand, ${P}^{k,u} \in \mathbb{L}_{\mathbb{F},loc}^{ \frac{ 2 ( 1 + \delta ) }{ 1 - \delta } } ( 0,T; \mathbb{L}^{ \frac{ 2n ( 1 + \delta ) }{ 2n - ( k \vee n ) ( 1 + \delta ) } } ( \Omega ) )
                          \subset \mathbb{L}_{\mathbb{F},loc}^{2} ( 0,T; \mathbb{L}^{ \frac{ 2n }{ 2n - k \vee n } } ( \Omega ) )$
is $\mathbb{P}$-a.s. sample continuous; see the proof of \Cref{lem: integrabiity and uniqueness of diagonal process}.
Consequently,
\begin{equation*}
\limsup_{ \varepsilon \downarrow 0 } \frac{1}{\varepsilon } \int_{t}^{ t + \varepsilon } | {P}^{ k, \bar{u} }_{t} - {P}^{ k, \bar{u} }_{s} |^{2} ds = 0, \quad \forall t \in [ 0,T ], ~ \mathbb{P}\text{-}a.s.
\end{equation*}
Therefore,
\begin{equation*}
\mathbb{E} \bigg[ \limsup_{ \varepsilon \downarrow 0 } \frac{1}{\varepsilon } \int_{t}^{ t + \varepsilon } | ( {P}^{ k, \bar{u} }_{t} - {P}^{ k, \bar{u} }_{s} ) {\xi }^{ k, \bar{u} }_{s} | ds \bigg] = 0,
\end{equation*}
and hence,
\begin{equation*}
\limsup_{ \varepsilon \downarrow 0 } \frac{1}{\varepsilon } \mathbb{E}_{t} \bigg[ \int_{t}^{ t + \varepsilon } | ( {P}^{ k, \bar{u} }_{t} - {P}^{ k, \bar{u} }_{s} ) {\xi }^{ k, \bar{u} }_{s} | ds \bigg]
\le \mathbb{E}_{t} \bigg[ \limsup_{ \varepsilon \downarrow 0 } \frac{1}{\varepsilon } \int_{t}^{ t + \varepsilon } | ( {P}^{ k, \bar{u} }_{t} - {P}^{ k, \bar{u} }_{s} ) {\xi }^{ k, \bar{u} }_{s} | ds \bigg] = 0,
\end{equation*}
which yields our desired result $\mathbb{E}_{t} [ \int_{t}^{ t + \varepsilon } | ( {P}^{ k, \bar{u} }_{t} - {P}^{ k, \bar{u} }_{s} ) {\xi }^{ k, \bar{u} }_{s} | ds ] = o ( \varepsilon )$.
Thus, \cref{eq: spike perturbation modified :eq} holds.

If $\bar{u} \in \mathcal{U}$ satisfies the equilibrium condition \cref{eq: equilibrium condition :eq}, then substituting it into \cref{eq: spike perturbation modified :eq} immediately yields
\begin{equation*}
{J}^{t} ( \bar{u}^{ t, \varepsilon, \zeta } ) - {J}^{t} ( \bar{u} ) \le o ( \varepsilon ), \quad \forall \zeta \in \mathbb{L}_{ \mathcal{F}_{t} }^{2n} ( \Omega ), ~ \mathbb{P}\text{-}a.s., ~ a.e. ~ t \in [ 0,T ),
\end{equation*}
which implies that $\bar{u}$ is an ONEC. 
That is, the equilibrium condition \cref{eq: equilibrium condition :eq} 
is sufficient. Conversely, if $\bar{u}$ is an ONEC, then plugging \cref{eq: spike perturbation modified :eq} back into \Cref{def: open-loop Nash equilibrium control}, 
in conjunction with the integrability of ${Y}^{ s, \bar{u} }_{s} {\delta }^{\zeta } b (s) + \mathcal{Y}^{ s, \bar{u} }_{s} {\delta }^{\zeta } \sigma (s) + \frac{1}{2} {Z}^{ s, \bar{u} }_{s} | {\delta }^{\zeta } \sigma (s) |^{2}$ guaranteed by
\cref{eq: Lipschitz continuity :eq} and \Cref{lem: integrabiity and uniqueness of diagonal process}, yields
\begin{align*}
0 & \ge \mathbb{E} \bigg[ \limsup_{ \varepsilon \downarrow 0 } \frac{1}{\varepsilon } \int_{t}^{ t + \varepsilon } 
                                                               \mathbb{E}_{t} \Big[ {Y}^{ s, \bar{u} }_{s} {\delta }^{\zeta } b (s) + \mathcal{Y}^{ s, \bar{u} }_{s} {\delta }^{\zeta } \sigma (s) 
                                                                                  + \frac{1}{2} {Z}^{ s, \bar{u} }_{s} | {\delta }^{\zeta } \sigma (s) |^{2} \Big] ds \bigg] \\
  & \ge \limsup_{ \varepsilon \downarrow 0 } \frac{1}{\varepsilon } \int_{t}^{ t + \varepsilon } 
                                             \mathbb{E} \Big[ {Y}^{ s, \bar{u} }_{s} {\delta }^{\zeta } b (s) + \mathcal{Y}^{ s, \bar{u} }_{s} {\delta }^{\zeta } \sigma (s) 
                                                            + \frac{1}{2} {Z}^{ s, \bar{u} }_{s} | {\delta }^{\zeta } \sigma (s) |^{2} \Big] ds \\
  &   = \mathbb{E} \Big[ \Big( {Y}^{ t, \bar{u} }_{t} {\delta }^{z} b (t) + \mathcal{Y}^{ t, \bar{u} }_{t} {\delta }^{z} \sigma (t) + \frac{1}{2} {Z}^{ t, \bar{u} }_{t} | {\delta }^{z} \sigma (t) |^{2} \Big) {1}_{S} \Big]
\end{align*}
for a.e. $t \in [ 0,T )$ and $\zeta = z {1}_{S}$ with arbitrarily fixed $z \in \mathbb{R}$ and $S \in \mathcal{F}_{t}$. 
Hence, \cref{eq: equilibrium condition :eq} follows from the arbitrariness of $S \in \mathcal{F}_{t}$.
That is, the necessity of the equilibrium condition \cref{eq: equilibrium condition :eq} arises. 
Therefore, this proof is complete.

\subsection{Proof of \texorpdfstring{\Cref{thm: method of undetermined coefficients}}{Theorem 5.3}}
\label{pf-thm: method of undetermined coefficients}

Let us treat the desired expression for ${Y}^{t}$ as an ansatz so that
\begin{align*}
d {Y}^{t}_{s} & = \bigg( {\Phi }_{s}' - \sum_{j=2}^{2n} \big( {\phi }_{j,s}' ( {\chi }_{s} - {\chi }_{t} )^{j-1} - {\psi }_{j,s}' \mathbb{E}_{t} [ ( {\chi }_{s} - {\chi }_{t} )^{j-1} ] \big) \bigg) ds \\
        & \quad - \sum_{j=2}^{2n} (j-1) {\phi }_{j,s} ( {\chi }_{s} - {\chi }_{t} )^{j-2} \mathfrak{X}_{s} d {W}_{s}\\
        & \quad - \sum_{j=3}^{2n} \binom{j-1}{2} \big( {\phi }_{j,s} ( {\chi }_{s} - {\chi }_{t} )^{j-3} | \mathfrak{X}_{s} |^{2} - {\psi }_{j,s} \mathbb{E}_{t} [ ( {\chi }_{s} - {\chi }_{t} )^{j-3} | \mathfrak{X}_{s} |^{2} ] \big) ds,
\end{align*}
which leads to the desired expression for $\mathcal{Y}^{t}$.
Consequently, ${B}_{s} {\Phi }_{s} = {D}_{s} {\phi }_{2,s} \mathfrak{X}_{s}$, i.e., the desired expression for $\mathfrak{X}$, immediately follows from the equilibrium condition $0 = {B}_{s} {Y}^{s}_{s} + {D}_{s} \mathcal{Y}^{s}_{s}$.
Since \cref{eq: ODEs :eq} has a solution, we have
\begin{align}\label{eq: ds term of BSDE in linear case :eq}
0 & = {A}_{s} \bigg( {\Phi }_{s} - \sum_{j=2}^{2n} \big( {\phi }_{j,s} ( {\chi }_{s} - {\chi }_{t} )^{j-1} - {\psi }_{j,s} \mathbb{E}_{t} [ ( {\chi }_{s} - {\chi }_{t} )^{j-1} ] \big) \bigg) \\
  & \quad - {I}_{s} \frac{ {B}_{s} {\Phi }_{s} }{ {D}_{s} {\phi }_{2,s} } 
            \sum_{j=1}^{2n-1} j {\phi }_{j+1,s} ( {\chi }_{s} - {\chi }_{t} )^{j-1} \notag \\
  & \quad + {\Phi }_{s}' - \sum_{j=2}^{2n} \big( {\phi }_{j,s}' ( {\chi }_{s} - {\chi }_{t} )^{j-1} - {\psi }_{j,s}' \mathbb{E}_{t} [ ( {\chi }_{s} - {\chi }_{t} )^{j-1} ] \big) \notag \\
  & \quad - \Big| \frac{ {B}_{s} {\Phi }_{s} }{ {D}_{s} {\phi }_{2,s} } \Big|^{2}
            \sum_{j=1}^{2n-2} \binom{j+1}{2} \big( {\phi }_{j+2,s} ( {\chi }_{s} - {\chi }_{t} )^{j-1} - {\psi }_{j+2,s} \mathbb{E}_{t} [ ( {\chi }_{s} - {\chi }_{t} )^{j-1} ] \big). \notag
\end{align}
implying that $d {Y}^{t}_{s} = - ( {A}_{s} {Y}^{t}_{s} + {I}_{s} \mathcal{Y}^{t}_{s} ) ds + \mathcal{Y}^{t}_{s} d {W}_{s}$ for our ansatz.
Additionally, plugging the terminal conditions in \cref{eq: ODEs :eq} into our ansatz, we know that the assumed ${Y}^{t}_{T}$ satisfies the terminal condition in \cref{eq: BSDE in linear case :eq}.
Thus far, we have verified that our desired expression for $( \mathfrak{X}, {Y}^{t}, \mathcal{Y}^{t} )$ satisfies \cref{eq: BSDE in linear case :eq}, and thus, this proof is completed.

\subsection{Proof of \texorpdfstring{\Cref{thm: method of polynomial algebraic equation}}{Theorem 5.5}}
\label{pf-thm: method of polynomial algebraic equation}

In fact, \cref{eq: integral equation :eq} and \cref{eq: integral inequality :eq} can be re-expressed as
\begin{equation*}
H ( {p}_{t}, {q}_{t} ) \frac{ {B}_{t} }{ {D}_{t} } {I}_{t} dt = {H}_{p} ( {p}_{t}, {q}_{t} ) d {p}_{t}, \quad
0 \ge {H}_{p} ( 2 {p}_{t}, {q}_{t} ).
\end{equation*}
Notably, $H ( {p}_{t}, \cdot ) \equiv {e}^{ - \int_{t}^{T} \frac{ {B}_{v} }{ {D}_{v} } {I}_{v} dv }$ if ${H}_{q} \equiv 0$.
Therefore, in the rest of this proof, 
it suffices to show the equivalence between the given condition \cref{eq: binomial inversion :eq} and ${H}_{q} \equiv 0$.
Given the expression of $H ( p,q )$, we know that ${H}_{q} \equiv 0$ is equivalent to $\sum_{j=k}^{2n} \binom{j}{k} {\alpha }_{j-k} (q) {\varphi }_{j} ( \vec{\alpha } (q) ) \equiv {c}_{k} \in \mathbb{R}$ 
for $k = 2,3, \ldots, 2n$ with some constant sequence $\{ {c}_{k} \}_{k = 2,3, \ldots, 2n}$, namely,
\begin{equation}\label{eq: binomial transform :eq}
\sum_{j=0}^{k} \binom{N-2j}{N-2k} {\alpha }^{2k-2j} (q) {\varphi }_{N-2j} \big( \vec{\alpha } (q) \big) = {c}_{N-2k}, \quad \forall k \le \frac{N}{2} - 1, ~ N = 2n, 2n-1.
\end{equation}
Given \cref{eq: binomial transform :eq}, based on the method of binomial inversion, one obtains
\begin{align*}
  {\varphi }_{N-2m} \big( \vec{\alpha } (q) \big)
& = \sum_{j=0}^{m} \binom{N-2j}{N-2m} {\alpha }_{2m-2j} (1) {\varphi }_{N-2j} \big( \vec{\alpha } (q) \big) \sum_{k=j}^{m} \binom{m-j}{k-j} {q}^{k-j} (-q)^{m-k} \\
& = \sum_{k=0}^{m} \sum_{j=0}^{k} \binom{N-2j}{N-2m} \binom{2m-2j}{2m-2k} (-1)^{m-k} {\alpha }_{2m-2k} (q) {\alpha }_{2k-2j} (q) {\varphi }_{N-2j} \big( \vec{\alpha } (q) \big) \\
& = \sum_{k=0}^{m} \binom{N-2k}{N-2m} (-1)^{m-k} {\alpha }_{2m-2k} (q) \sum_{j=0}^{k} \binom{N-2j}{N-2k} {\alpha }_{2k-2j} (q) {\varphi }_{N-2j} \big( \vec{\alpha } (q) \big) \\
& = \sum_{k=0}^{m} \binom{N-2k}{N-2m} (-1)^{m-k} {\alpha }_{2m-2k} (q) {c}_{N-2k}, \quad \forall m \le \frac{N}{2} - 1, ~ N = 2n, 2n-1.
\end{align*}
Conversely, with the given condition \cref{eq: binomial inversion :eq}, one can also use the method of binomial inversion to arrive at
\begin{equation*}
(-1)^{m} {c}_{N-2m} = \sum_{k=0}^{m} \binom{N-2k}{N-2m} (-1)^{m-k} {\alpha }_{2m-2k} (q) \cdot (-1)^{k} {\varphi }_{N-2k} \big( \vec{\alpha } (q) \big),
\end{equation*}
namely, \cref{eq: binomial transform :eq}. Hence, \cref{eq: binomial inversion :eq} is equivalent to ${H}_{q} \equiv 0$ and leads to the desired result.

\subsection{Proof of \texorpdfstring{\Cref{thm: MV solution is ONEC}}{Theorem 5.8}}
\label{pf-thm: MV solution is ONEC}

Owing to the uniqueness of the solutions for the BSDEs \cref{eq: BSDE ending at xi :eq} and \cref{eq: BSDE in linear case :eq}, 
the equivalence between \cref{eq: MV time-consistent solution :eq} and $\mathfrak{X}_{t} = \frac{ {B}_{t} }{ \gamma {D}_{t} }$ for a.e. $t \in [ 0,T )$ arises.
Since \Cref{thm: maximum principle} with \Cref{rem: equilibrium condition reduced} and \cref{eq: BSDE in linear case :eq} indeed provides the sufficiency and necessity of \cref{eq: equilibrium condition for I=0 :eq} for the ONECs,
it suffices to show the equivalence between \cref{eq: homogeneity condition :eq} and
\begin{equation}\label{eq: vanishing derivative :eq}
\sum_{j=1}^{n} j (2j-1) {\alpha }_{2j-2} \Big( \int_{t}^{T} | \mathfrak{X}_{s} |^{2} ds \Big) {\varphi }_{2j} \bigg( \vec{\alpha } \Big( \int_{t}^{T} | \mathfrak{X}_{s} |^{2} ds \Big) \bigg) = 0, \quad a.e. ~ t \in \{ s: | {B}_{s} | > 0 \}.
\end{equation}
Given \cref{eq: homogeneity condition :eq}, \cref{eq: vanishing derivative :eq} immediately follows from \cref{eq: derivative of varphi-alpha :eq}.
Conversely, given \cref{eq: vanishing derivative :eq}, as $\int_{t}^{T} | \mathfrak{X}_{s} |^{2} ds = \int_{t}^{T} | \frac{ {B}_{s} }{ \gamma {D}_{s} } |^{2} ds$ is absolutely continuous and decreasing in $t$,
\cref{eq: vanishing derivative :eq} must hold for every $t \in [ 0,T )$, which leads to \cref{eq: homogeneity condition :eq}.

\subsection{Proof of \texorpdfstring{\Cref{thm: BSDEs characterization}}{Theorem 5.10}}
\label{pf-thm: BSDEs characterization}

Let us introduce the process $\{ {\chi }^{ t,x, \mathfrak{X} } \}_{ s \in [ t,T ] }$ via ${\chi }^{ t,x, \mathfrak{X} }_{s} = x + \int_{t}^{s} \mathfrak{X}_{v} d {W}_{v}$.
Then, for $i=0,1,2$, $j = 0, 1, \ldots, 2n-1$ and $m \in \mathbb{R}$, 
the random field ${M}^{i,j} ( t,x ) := \mathbb{E}^{(i)}_{t} [ {e}^{ i \int_{t}^{T} ( {A}_{v} + \frac{i-1}{2} | {I}_{v} |^{2} ) dv } ( {\chi }^{ t,x, \mathfrak{X} }_{T} )^{j} ]$ enables a semi-martingale decomposition.
In fact, ${M}^{i,j}$ has the following stochastic Feynman-Kac representation:
\begin{equation*}
\left\{ \begin{aligned}
- d {M}^{i,j} ( t,x ) 
& = \bigg\{ \frac{1}{2} {M}^{i,j}_{xx} ( t,x ) | \mathfrak{X}_{t} |^{2} + \mathcal{M}^{i,j}_{x} ( t,x ) \mathfrak{X}_{t} + i {M}^{i,j}_{x} ( t,x ) \mathfrak{X}_{t} {I}_{t}  \\
& \qquad  + i {M}^{i,j} ( t,x ) \Big( {A}_{t} + \frac{i-1}{2} | {I}_{t} |^{2} \Big) \bigg\} dt
  - \mathcal{M}^{i,j} ( t,x ) ( d {W}_{t} - i {I}_{t} dt ), \\
    {M}^{i,j} ( T,x ) & = {x}^{j}.
\end{aligned} \right.
\end{equation*}
Interested readers can refer to \cite{Ma-Yong-1997,Yong-Zhou-1999} for the related theories.
Notably, as the pair $( {M}^{i,j}, \mathcal{M}^{i,j} )$ with $j \ge 1$
has been given by the semi-martingale decomposition of the given conditional expectations,
one can eliminate the questions about the existence, uniqueness and moment estimates of the solution for the corresponding BSPDE.
Moreover, since ${M}^{i,j}_{x} = j {M}^{i,j-1}$, the stochastic Feynman-Kac representation can be re-expressed as
\begin{equation*}
\left\{ \begin{aligned}
- d {M}^{i,j} ( t,x ) 
& = \bigg\{ \frac{1}{2} j (j-1) {M}^{i,j-2} ( t,x ) | \mathfrak{X}_{t} |^{2} + j \mathcal{M}^{i,j-1} ( t,x ) \mathfrak{X}_{t} + i j {M}^{i,j-1} ( t,x ) \mathfrak{X}_{t} {I}_{t}  \\
& \qquad  + i \mathcal{M}^{i,j} ( t,x ) {I}_{t} + i {M}^{i,j} ( t,x ) \Big( {A}_{t} + \frac{i-1}{2} | {I}_{t} |^{2} \Big) \bigg\} dt
  - \mathcal{M}^{i,j} ( t,x ) d {W}_{t}, \\
    {M}^{i,j} ( T,x ) & = {x}^{j}.
\end{aligned} \right.
\end{equation*}
Plugging $x = 0$ into these equations and letting $( \mathbb{M}^{i,j}, \mathfrak{M}^{i,j} ) = ( {M}^{i,j} ( \cdot, 0 ), \mathcal{M}^{i,j} ( \cdot, 0 ) )$ immediately yields the BSDE \cref{eq: BSDEs auxiliary :eq}.
Conversely, with a fixed $i$, starting at  
\begin{equation*}
- d \mathbb{M}^{i,0}_{t} 
= \bigg\{ i \mathfrak{M}^{i,0}_{t} {I}_{t} + i \mathbb{M}^{i,0}_{t} \Big( {A}_{t} + \frac{i-1}{2} | {I}_{t} |^{2} \Big) \bigg\} dt
- \mathfrak{M}^{i,0}_{t} d {W}_{t}, \quad
\mathbb{M}^{i,0}_{T} = 1,
\end{equation*}
the BDSE \cref{eq: BSDEs auxiliary :eq} for every $j = 1, \ldots, 2n-i$ has a unique solution:
\begin{equation*}
( \mathbb{M}^{i,j}, \mathfrak{M}^{i,j} ) \in {C}_{\mathbb{F}} \big( 0,T; \mathbb{L}^{ \frac{2n}{j} } ( \Omega ) \big) \times \mathbb{L}_{\mathbb{F}}^{2} \big( 0,T; \mathbb{L}^{ \frac{2n}{j} } ( \Omega ) \big).
\end{equation*}
This proves the first assertion of \Cref{thm: BSDEs characterization}.
Furthermore, by the It\^o-Kunita-Ventzel formula (see \cite[Theorem 1.5.3.2]{Jeanblanc-Yor-Chesney-2009}), we obtain
\begin{equation*}
  d {M}^{i,j} ( s, {\chi }_{s} ) 
= - i {M}^{i,j} ( s, {\chi }_{s} ) {A}_{s} ds 
    + \big( j {M}^{i,j-1} ( s, {\chi }_{s} ) \mathfrak{X}_{s} + \mathcal{M}^{i,j} ( s, {\chi }_{s} ) \big) ( d {W}_{s} - i {I}_{s} ds ).
\end{equation*}
Hence, owing to \cref{eq: BSDE in linear case :eq}, we have the following expression for $( {Y}^{t}_{s}, \mathcal{Y}^{t}_{s} )$:
\begin{align*}
{Y}^{t}_{s} & = {M}^{ 1,0 } ( s, {\chi }_{s} - {\chi }_{t} ) 
                \bigg( 1 + \sum_{j=2}^{2n} j {M}^{ 0, j-1 } ( t,0 ) {\varphi }_{j} \Big( {M}^{ 0,2 } ( t,0 ), \ldots, {M}^{ 0,2n } ( t,0 ) \Big) \bigg) \\
      & \quad - \gamma {M}^{ 1,1 } ( s, {\chi }_{s} - {\chi }_{t} )
              - \sum_{j=2}^{2n} j {M}^{ 1, j-1 } ( s, {\chi }_{s} - {\chi }_{t} ) {\varphi }_{j} \Big( {M}^{ 0,2 } ( t,0 ), \ldots, {M}^{ 0, 2n } ( t,0 ) \Big)
\end{align*}
and
\begin{align*}
\mathcal{Y}^{t}_{s} & = \mathcal{M}^{ 1,0 } ( s, {\chi }_{s} - {\chi }_{t} ) 
                        \bigg( 1 + \sum_{j=2}^{2n} j {M}^{ 0, j-1 } ( t,0 ) {\varphi }_{j} \Big( {M}^{ 0,2 } ( t,0 ), \ldots, {M}^{ 0,2n } ( t,0 ) \Big) \bigg) \\
              & \quad - \gamma \big( {M}^{ 1,0 } ( s, {\chi }_{s} - {\chi }_{t} ) \mathfrak{X}_{s} + \mathcal{M}^{ 1,1 } ( s, {\chi }_{s} - {\chi }_{t} ) \big) \\
& \quad - \sum_{j=2}^{2n} j \Big( ( j - 1 ) {M}^{ 1, j-2 } ( s, {\chi }_{s} - {\chi }_{t} ) \mathfrak{X}_{s} + \mathcal{M}^{ 1, j-1 } ( s, {\chi }_{s} - {\chi }_{t} ) \Big) 
                          {\varphi }_{j} \Big( {M}^{ 0,2 } ( t,0 ), \ldots, {M}^{ 0,2n } ( t,0 ) \Big),
\end{align*}
implying that the equilibrium condition $0 = {B}_{t} {Y}^{t}_{t} + {D}_{t} \mathcal{Y}^{t}_{t}$ can be re-expressed as \begin{align*}
0 & = \big( \mathbb{M}^{ 1,0 }_{t} {B}_{t} + \mathfrak{M}^{ 1,0 }_{t} {D}_{t} \big)
    - \gamma \big( \mathbb{M}^{ 1,1 }_{t} {B}_{t} + \mathfrak{M}^{ 1,1 }_{t} {D}_{t} + \mathbb{M}^{ 1,0 }_{t} {D}_{t} \mathfrak{X}_{t} \big) \\
  & \quad - \sum_{j=2}^{2n} \Big( \mathbb{M}^{ 1, j-1 }_{t} {B}_{t} - \mathbb{M}^{ 0, j-1 }_{t} {B}_{t} \mathbb{M}^{ 1,0 }_{t}
                                + \mathfrak{M}^{ 1, j-1 }_{t} {D}_{t} - \mathbb{M}^{ 0, j-1 }_{t} {D}_{t} \mathfrak{M}^{ 1,0 }_{t} + ( j - 1 ) \mathbb{M}^{ 1, j-2 }_{t} {D}_{t} \mathfrak{X}_{t} \Big) \\
  & \qquad \qquad \times    j {\varphi }_{j} ( \mathbb{M}^{ 0,2 }_{t}, \ldots, \mathbb{M}^{ 0,2n }_{t} ),
\end{align*}
or equivalently, as \cref{eq: equilibrium condition of first-order :eq}.
Similarly, one can obtain that ${Z}^{ t, \bar{u} }_{t} \le 0$ is equivalent to \cref{eq: equilibrium condition of second-order :eq}.
Thus, the proof is complete.

\section{Standard moment estimate results}
\label{app: Standard moment estimate results}

Let us consider the SDE $d {x}_{t} = ( {A}_{t} {x}_{t} + {\alpha }_{t} ) dt + ( {B}_{t} {x}_{t} + {\beta }_{t} ) d {W}_{t}$ with $\mathbb{F}$-adapted parameters,
where $( A,B )$ are essentially bounded and $( \alpha, \beta )$ satisfies $\mathbb{E} [ ( \int_{0}^{T} | {\alpha }_{s} | ds )^{2k} + ( \int_{0}^{T} | {\beta }_{s} |^{2} ds )^{k} ] < \infty $ with some $k \ge 1$.
Afterward, through straightforward calculations, with the use of Burkholder's inequality and H\"older's inequality,
\begin{align*}
  \mathbb{E}_{t} \bigg[ \sup_{ \tau \in [ t, t + \varepsilon ] } | {x}_{\tau } |^{2k} \bigg]
& \le {5}^{2k-1} \bigg( | {x}_{t} |^{2k}
                      + \mathbb{E}_{t} \bigg[ \sup_{ \tau \in [ t, t + \varepsilon ] } \Big| \int_{t}^{\tau } {A}_{s} {x}_{s} ds \Big|^{2k} \bigg]
                      + \mathbb{E}_{t} \bigg[ \sup_{ \tau \in [ t, t + \varepsilon ] } \Big| \int_{t}^{\tau } {\alpha }_{s} ds \Big|^{2k} \bigg] \\
& \qquad \qquad       + \mathbb{E}_{t} \bigg[ \sup_{ \tau \in [ t, t + \varepsilon ] } \Big| \int_{t}^{\tau } {B}_{s} {x}_{s} d {W}_{s} \Big|^{2k} \bigg]
                      + \mathbb{E}_{t} \bigg[ \sup_{ \tau \in [ t, t + \varepsilon ] } \Big| \int_{t}^{\tau } {\beta }_{s} d {W}_{s} \Big|^{2k} \bigg] \bigg) \\
& \le K \bigg( | {x}_{t} |^{2k}
             + \mathbb{E}_{t} \bigg[ \int_{t}^{ t + \varepsilon } | {A}_{s} {x}_{s} |^{2k} ds \bigg]
             + \mathbb{E}_{t} \bigg[ \Big( \int_{t}^{ t + \varepsilon } | {\alpha }_{s} | ds \Big)^{2k} \bigg] \\
& \qquad ~~  + \mathbb{E}_{t} \bigg[ \Big( \int_{t}^{ t + \varepsilon } | {B}_{s} {x}_{s} |^{2} ds \Big)^{k} \bigg]
             + \mathbb{E}_{t} \bigg[ \Big( \int_{t}^{ t + \varepsilon } | {\beta }_{s} |^{2} ds \Big)^{k} \bigg] \bigg) \\
& \le K \bigg( | {x}_{t} |^{2k}
             + \mathbb{E}_{t} \bigg[ \Big( \int_{t}^{T} | {\alpha }_{s} | ds \Big)^{2k} \bigg]
             + \mathbb{E}_{t} \bigg[ \Big( \int_{t}^{T} | {\beta }_{s} |^{2} ds \Big)^{k} \bigg] \\
& \qquad ~~  + \Big( \esssup_{ [ 0,T ] \times \Omega } |A|^{2k} + \esssup_{ [ 0,T ] \times \Omega } |B|^{2k} \Big) \int_{0}^{\varepsilon } \mathbb{E}_{t} \bigg[ \sup_{ \tau \in [ t, t + s ] } | {x}_{\tau } |^{2k} \bigg] ds \bigg)
\end{align*}
for some constant $K > 0$ that relies only on $k$.
By applying Gr\"onwall's inequality to $\mathbb{E}_{t} [ \sup_{ \tau \in [ t, t + \cdot ] } | {x}_{\tau } |^{2k} ]$, one can conclude that there exists a constant $K > 0$ that relies only on $( k,T )$ and the essential supremum of $( A,B )$ such that
\begin{equation*}
\mathbb{E}_{t} \bigg[ \sup_{ \tau \in [ t,T ] } | {x}_{\tau } |^{2k} \bigg] 
\le K \bigg( | {x}_{t} |^{2k} + \mathbb{E}_{t} \bigg[ \Big( \int_{t}^{T} | {\alpha }_{s} | ds \Big)^{2k} \bigg] + \mathbb{E}_{t} \bigg[ \Big( \int_{t}^{T} | {\beta }_{s} |^{2} ds \Big)^{k} \bigg] \bigg),
\end{equation*}
which produces a slightly stronger result than \cite[Lemma 3.4.2]{Yong-Zhou-1999} through the generalized Minkowski inequality, i.e.,
\begin{equation*}
\mathbb{E}_{t} \bigg[ \Big( \int_{t}^{T} | {\alpha }_{s} | ds \Big)^{2k} \bigg] \le \Big( \int_{t}^{T} ( \mathbb{E}_{t} [ | {\alpha }_{s} |^{2k} ] )^{ \frac{1}{2k} } ds \Big)^{2k}, \quad
\mathbb{E}_{t} \bigg[ \Big( \int_{t}^{T} | {\beta }_{s} |^{2} ds \Big)^{k} \bigg] \le \Big( \int_{t}^{T} ( \mathbb{E}_{t} [ | {\beta }_{s} |^{2k} ] )^{ \frac{1}{k} } ds \Big)^{k}.
\end{equation*}
For brevity, hereafter, we let $K$ be a generic $( t, \varepsilon, \zeta )$-independent constant that may differ in different places.
As $( {b}_{x}, {b}_{xx}, {\sigma }_{x}, {\sigma }_{xx} )$ are uniformly bounded, we have
\begin{equation*}
\mathbb{E}_{t} \bigg[ \sup_{ s \in [ t,T ] } | {y}^{ t, \varepsilon, \zeta }_{s} |^{2k} \bigg] 
\le K \mathbb{E}_{t} \bigg[ \Big( \int_{t}^{ t + \varepsilon } | {\delta }^{\zeta } \sigma (s) |^{2} ds \Big)^{k} \bigg] 
\le K | \zeta |^{2k} {\varepsilon }^{k}
\end{equation*}
and
\begin{align*}
  \mathbb{E}_{t} \bigg[ \sup_{ s \in [ t,T ] } | {z}^{ t, \varepsilon, \zeta }_{s} |^{2k} \bigg]
& \le K \bigg( \mathbb{E}_{t} \bigg[ \Big( \int_{t}^{T} \Big| \frac{1}{2} {b}_{xx} (s) ( {y}^{ t, \varepsilon, \zeta }_{s} )^{2} 
                                                            + {1}_{\{ s \in [ t, t + \varepsilon ) \}} {\delta }^{\zeta } b (s) \Big| ds \Big)^{2k} \bigg] \\
& \qquad ~~  + \mathbb{E}_{t} \bigg[ \Big( \int_{t}^{T} \Big| \frac{1}{2} {\sigma }_{xx} (s) ( {y}^{ t, \varepsilon, \zeta }_{s} )^{2} 
                                                         + {1}_{\{ s \in [ t, t + \varepsilon ) \}} {\delta }^{\zeta } {\sigma }_{x} (s) {y}^{ t, \varepsilon, \zeta }_{s} \Big|^{2} ds \Big)^{k} \bigg] \bigg) \\
& \le K \bigg( | \zeta |^{4k} {\varepsilon }^{2k} 
             + \mathbb{E}_{t} \bigg[ \Big( \int_{t}^{ t + \varepsilon } | {\delta }^{\zeta } b (s) | ds \Big)^{2k} \bigg] 
             + \mathbb{E}_{t} \bigg[ \Big( \int_{t}^{ t + \varepsilon } | {y}^{ t, \varepsilon, \zeta }_{s} |^{2} ds \Big)^{k} \bigg] \bigg) \\
& \le K | \zeta \vee 1 |^{4k} {\varepsilon }^{2k}.
\end{align*}
Furthermore, for the following linearized SDE of $\tilde{y}^{ t, \varepsilon, \zeta }:= {X}^{ \bar{u}^{ t, \varepsilon, \zeta } } - {X}^{ \bar{u} }$, i.e.,
\begin{align*}
  d \tilde{y}^{ t, \varepsilon, \zeta }_{s}
& = \bigg( \tilde{y}^{ t, \varepsilon, \zeta }_{s} \int_{0}^{1} {b}_{x} \big( s, \theta {X}^{ \bar{u} }_{s} + ( 1 - \theta ) {X}^{ \bar{u}^{ t, \varepsilon, \zeta } }_{s}, \bar{u}^{ t, \varepsilon, \zeta }_{s} \big) d \theta
         + {1}_{\{ s \in [ t, t + \varepsilon ) \}} {\delta }^{\zeta } b (s) \bigg) ds \\
& \quad + \bigg( \tilde{y}^{ t, \varepsilon, \zeta }_{s} \int_{0}^{1} {\sigma }_{x} \big( s, \theta {X}^{ \bar{u} }_{s} + ( 1 - \theta ) {X}^{ \bar{u}^{ t, \varepsilon, \zeta } }_{s}, \bar{u}^{ t, \varepsilon, \zeta }_{s} \big) d \theta
               + {1}_{\{ s \in [ t, t + \varepsilon ) \}} {\delta }^{\zeta } \sigma (s) \bigg) d {W}_{s},
\end{align*}
we have
\begin{align*}
  \mathbb{E}_{t} \bigg[ \sup_{ s \in [ t,T ] } | \tilde{y}^{ t, \varepsilon, \zeta }_{s} |^{2k} \bigg] 
& \le K \bigg( \mathbb{E}_{t} \bigg[ \Big( \int_{t}^{ t + \varepsilon } | {\delta }^{\zeta } b (s) | ds \Big)^{2k} \bigg] 
             + \mathbb{E}_{t} \bigg[ \Big( \int_{t}^{ t + \varepsilon } | {\delta }^{\zeta } \sigma (s) |^{2} ds \Big)^{k} \bigg] \bigg) \\
& \le K | \zeta |^{2k} ( {\varepsilon }^{2k} + {\varepsilon }^{k} ).
\end{align*}
In the same manner, since
\begin{align*}
  f ( s, {X}^{ \bar{u}^{ t, \varepsilon, \zeta } }_{s}, \bar{u}^{ t, \varepsilon, \zeta }_{s} ) 
& = f(s) + {1}_{\{ s \in [ t, t + \varepsilon ) \}} {\delta }^{\zeta } f (s) 
  + ( {X}^{ \bar{u}^{ t, \varepsilon, \zeta } }_{s} - {X}^{ \bar{u} }_{s} ) \big( {f}_{x} (s) + {1}_{\{ s \in [ t, t + \varepsilon ) \}} {\delta }^{\zeta } {f}_{x} (s) \big) \\
& \quad + ( {X}^{ \bar{u}^{ t, \varepsilon, \zeta } }_{s} - {X}^{ \bar{u} }_{s} )^{2} 
          \int_{0}^{1} \theta {f}_{xx} \big( s, \theta {X}^{ \bar{u} }_{s} + ( 1 - \theta ) {X}^{ \bar{u}^{ t, \varepsilon, \zeta } }_{s}, \bar{u}^{ t, \varepsilon, \zeta }_{s} \big) d \theta
\end{align*}
for $f = b, \sigma $, one obtains the following linearized SDE of $\tilde{z}^{ t, \varepsilon, \zeta } := {X}^{ \bar{u}^{ t, \varepsilon, \zeta } } - {X}^{ \bar{u} } - {y}^{ t, \varepsilon, \zeta }$:
\begin{align*}
  d \tilde{z}^{ t, \varepsilon, \zeta }_{s}
& = \bigg( \tilde{z}^{ t, \varepsilon, \zeta }_{s} \big( {b}_{x} (s) + {1}_{\{ s \in [ t, t + \varepsilon ) \}} {\delta }^{\zeta } {b}_{x} (s) \big)
         + {1}_{\{ s \in [ t, t + \varepsilon ) \}} {\delta }^{\zeta } b (s) 
         + {1}_{\{ s \in [ t, t + \varepsilon ) \}} {\delta }^{\zeta } {b}_{x} (s) {y}^{ t, \varepsilon, \zeta }_{s} \\
& \quad \qquad + ( \tilde{y}^{ t, \varepsilon, \zeta }_{s} )^{2} 
                 \int_{0}^{1} \theta {b}_{xx} \big( s, \theta {X}^{ \bar{u} }_{s} + ( 1 - \theta ) {X}^{ \bar{u}^{ t, \varepsilon, \zeta } }_{s}, \bar{u}^{ t, \varepsilon, \zeta }_{s} \big) d \theta \bigg) ds \\
& \quad + \bigg( \tilde{z}^{ t, \varepsilon, \zeta }_{s} \big( {\sigma }_{x} (s) + {1}_{\{ s \in [ t, t + \varepsilon ) \}} {\delta }^{\zeta } {\sigma }_{x} (s) \big)
               + {1}_{\{ s \in [ t, t + \varepsilon ) \}} {\delta }^{\zeta } {\sigma }_{x} (s) {y}^{ t, \varepsilon, \zeta }_{s} \\
& \quad \qquad + ( \tilde{y}^{ t, \varepsilon, \zeta }_{s} )^{2} 
                 \int_{0}^{1} \theta {\sigma }_{xx} \big( s, \theta {X}^{ \bar{u} }_{s} + ( 1 - \theta ) {X}^{ \bar{u}^{ t, \varepsilon, \zeta } }_{s}, \bar{u}^{ t, \varepsilon, \zeta }_{s} \big) d \theta \bigg) d {W}_{s},
\end{align*}
and hence,
\begin{align*}
  \mathbb{E}_{t} \bigg[ \sup_{ s \in [ t,T ] } | \tilde{z}^{ t, \varepsilon, \zeta }_{s} |^{2k} \bigg]
& \le K \bigg( \mathbb{E}_{t} \bigg[ \Big( \int_{t}^{ t + \varepsilon } | {\delta }^{\zeta } b (s) | ds \Big)^{2k} \bigg]
             + \mathbb{E}_{t} \bigg[ \Big( \int_{t}^{ t + \varepsilon } | {\delta }^{\zeta } {b}_{x} (s) {y}^{ t, \varepsilon, \zeta }_{s} | ds \Big)^{2k} \bigg] \\
& \qquad ~~  + \mathbb{E}_{t} \bigg[ \Big( \int_{t}^{ t + \varepsilon } | {\delta }^{\zeta } {\sigma }_{x} (s) {y}^{ t, \varepsilon, \zeta }_{s} |^{2} ds \Big)^{k} \bigg]
             + \mathbb{E}_{t} \bigg[ \sup_{ s \in [ t,T ] } | \tilde{y}^{ t, \varepsilon, \zeta }_{s} |^{4k} \bigg] \bigg) \\
& \le K \big( | \zeta |^{2k} {\varepsilon }^{2k} + | \zeta |^{2k} {\varepsilon }^{3k} + | \zeta |^{2k} {\varepsilon }^{2k} + | \zeta |^{4k} ( {\varepsilon }^{4k} + {\varepsilon }^{2k} ) \big) \\
& \le K ( | \zeta \vee 1 |^{4k} {\varepsilon }^{2k} + | \zeta |^{2k} {\varepsilon }^{3k} + | \zeta |^{4k} {\varepsilon }^{4k} ).
\end{align*}
Finally, we consider the following linearized SDE of ${\eta }^{ t, \varepsilon, \zeta } := {X}^{ \bar{u}^{ t, \varepsilon, \zeta } } - {X}^{ \bar{u} } - {y}^{ t, \varepsilon, \zeta } - {z}^{ t, \varepsilon, \zeta }$:
\begin{align*}
  d {\eta }^{ t, \varepsilon, \zeta }_{s} 
& = \bigg( \big( {b}_{x} (s) + {1}_{\{ s \in [ t, t + \varepsilon ) \}} {\delta }^{\zeta } {f}_{x} (s) \big) {\eta }^{ t, \varepsilon, \zeta }_{s} 
         + {1}_{\{ s \in [ t, t + \varepsilon ) \}} {\delta }^{\zeta } {b}_{x} (s) ( {y}^{ t, \varepsilon, \zeta }_{s} + {z}^{ t, \varepsilon, \zeta }_{s} ) \\
& \qquad + ( {y}^{ t, \varepsilon, \zeta }_{s} + \tilde{z}^{ t, \varepsilon, \zeta }_{s} )^{2} 
           \int_{0}^{1} \theta {b}_{xx} \big( s, \theta {X}^{ \bar{u} }_{s} + ( 1 - \theta ) {X}^{ \bar{u}^{ t, \varepsilon, \zeta } }_{s}, \bar{u}^{ t, \varepsilon, \zeta }_{s} \big) d \theta
         - \frac{1}{2} {b}_{xx} (s) ( {y}^{ t, \varepsilon, \zeta }_{s} )^{2} \bigg) ds \\
& \quad + \bigg( \big( {\sigma }_{x} (s) + {1}_{\{ s \in [ t, t + \varepsilon ) \}} {\delta }^{\zeta } {\sigma }_{x} (s) \big) {\eta }^{ t, \varepsilon, \zeta }_{s} 
               + {1}_{\{ s \in [ t, t + \varepsilon ) \}} {\delta }^{\zeta } {\sigma }_{x} (s) {z}^{ t, \varepsilon, \zeta }_{s} \\
& \quad \qquad + ( {y}^{ t, \varepsilon, \zeta }_{s} + \tilde{z}^{ t, \varepsilon, \zeta }_{s} )^{2} 
                  \int_{0}^{1} \theta {\sigma }_{xx} \big( s, \theta {X}^{ \bar{u} }_{s} + ( 1 - \theta ) {X}^{ \bar{u}^{ t, \varepsilon, \zeta } }_{s}, \bar{u}^{ t, \varepsilon, \zeta }_{s} \big) d \theta
               - \frac{1}{2} {\sigma }_{xx} (s) ( {y}^{ t, \varepsilon, \zeta }_{s} )^{2} \bigg) d {W}_{s}.
\end{align*}
Suppose that $\varepsilon < 1$. Combining
\begin{equation*}
\mathbb{E}_{t} \bigg[ \Big( \int_{t}^{T} ( {y}^{ t, \varepsilon, \zeta }_{s} )^{2} \big| {b}_{xx} \big( s, {X}^{ \bar{u} }_{s}, \bar{u}^{ t, \varepsilon, \zeta }_{s} \big) - {b}_{xx} (s) \big| ds \Big)^{2k} \bigg] \\
\le K {\varepsilon }^{2k} \mathbb{E}_{t} \bigg[ \sup_{ s \in [ t,T ] } | {y}^{ t, \varepsilon, \zeta } |^{4k} \bigg]
\le K | \zeta |^{4k} {\varepsilon }^{4k}
\end{equation*}
and the following moment estimates derived by H\"older's inequality, i.e.,
\begin{align*}
& \mathbb{E}_{t} \bigg[ \Big( \int_{t}^{T} \big| ( {y}^{ t, \varepsilon, \zeta }_{s} + \tilde{z}^{ t, \varepsilon, \zeta }_{s} )^{2} - ( {y}^{ t, \varepsilon, \zeta }_{s} )^{2} \big| ds
                              \int_{0}^{1} \theta {b}_{xx} \big( s, \theta {X}^{ \bar{u} }_{s} + ( 1 - \theta ) {X}^{ \bar{u}^{ t, \varepsilon, \zeta } }_{s}, \bar{u}^{ t, \varepsilon, \zeta }_{s} \big) d \theta  \Big)^{2k} \bigg] \\
& \le K \bigg( \mathbb{E}_{t} \bigg[ \sup_{ s \in [ t,T ] } | {y}^{ t, \varepsilon, \zeta } \tilde{z}^{ t, \varepsilon, \zeta } |^{2k} \bigg]
             + \mathbb{E}_{t} \bigg[ \sup_{ s \in [ t,T ] } | \tilde{z}^{ t, \varepsilon, \zeta } |^{4k} \bigg] \bigg) 
  \le K | \zeta \vee 1 |^{6k} {\varepsilon }^{3k}
\end{align*}
and
\begin{align*}
& \frac{1}{ {\varepsilon }^{2k} }
  \mathbb{E}_{t} \bigg[ \Big( \int_{t}^{T} ( {y}^{ t, \varepsilon, \zeta }_{s} )^{2} ds 
                              \int_{0}^{1} \theta \big| {b}_{xx} \big( s, \theta {X}^{ \bar{u} }_{s} + ( 1 - \theta ) {X}^{ \bar{u}^{ t, \varepsilon, \zeta } }_{s}, \bar{u}^{ t, \varepsilon, \zeta }_{s} \big)
                                                      - {b}_{xx} \big( s, {X}^{ \bar{u} }_{s}, \bar{u}^{ t, \varepsilon, \zeta }_{s} \big) \big| d \theta \Big)^{2k} \bigg] \\
& \le K | \zeta |^{4k}
      \bigg| \mathbb{E}_{t} \bigg[ \Big( \int_{t}^{T} ds \int_{0}^{1} \max_{ v \in \{ 0,1 \} } 
                                         \big| {b}_{xx} \big( s, \theta {X}^{ \bar{u} }_{s} + ( 1 - \theta ) {X}^{ \bar{u}^{ t, \varepsilon, \zeta } }_{s}, \bar{u}_{s} + v \zeta \big) 
                                             - {b}_{xx} \big( s, {X}^{ \bar{u} }_{s}, \bar{u}_{s} + v \zeta \big) \big| d \theta \Big)^{4k} \bigg] \bigg|^{ \frac{1}{2} },
\end{align*}
which tends to $0$ as $\varepsilon \downarrow 0$ because of the dominated convergence theorem, one obtains
\begin{align*}
\lim_{ \varepsilon \downarrow 0 } \frac{1}{ {\varepsilon }^{2k} } 
\mathbb{E}_{t} \bigg[ \Big( \int_{t}^{T} \Big| ( {y}^{ t, \varepsilon, \zeta }_{s} + \tilde{z}^{ t, \varepsilon, \zeta }_{s} )^{2} 
                                               \int_{0}^{1} \theta {b}_{xx} \big( s, \theta {X}^{ \bar{u} }_{s} + ( 1 - \theta ) {X}^{ \bar{u}^{ t, \varepsilon, \zeta } }_{s}, \bar{u}^{ t, \varepsilon, \zeta }_{s} \big) d \theta 
\qquad \qquad \quad \\                       - \frac{1}{2} {b}_{xx} (s) ( {y}^{ t, \varepsilon, \zeta }_{s} )^{2} \Big| ds \Big)^{2k} \bigg]
= 0.
\end{align*}
Similarly, one also obtains
\begin{align*}
\lim_{ \varepsilon \downarrow 0 } \frac{1}{ {\varepsilon }^{2k} } 
\mathbb{E}_{t} \bigg[ \Big( \int_{t}^{T} \Big| ( {y}^{ t, \varepsilon, \zeta }_{s} + \tilde{z}^{ t, \varepsilon, \zeta }_{s} )^{2} 
                                               \int_{0}^{1} \theta {\sigma }_{xx} \big( s, \theta {X}^{ \bar{u} }_{s} + ( 1 - \theta ) {X}^{ \bar{u}^{ t, \varepsilon, \zeta } }_{s}, \bar{u}^{ t, \varepsilon, \zeta }_{s} \big) d \theta 
\qquad \qquad \quad \\                       - \frac{1}{2} {\sigma }_{xx} (s) ( {y}^{ t, \varepsilon, \zeta }_{s} )^{2} \Big|^{2} ds \Big)^{k} \bigg]
= 0.
\end{align*}
In summary, in conjunction with 
\begin{align*}
  \mathbb{E}_{t} \bigg[ \Big( \int_{t}^{ t + \varepsilon } | {\delta }^{\zeta } {b}_{x} (s) ( {y}^{ t, \varepsilon, \zeta }_{s} + {z}^{ t, \varepsilon, \zeta }_{s} ) | ds \Big)^{2k} \bigg]
& \le K {\varepsilon }^{2k} \bigg( \mathbb{E}_{t} \bigg[ \sup_{ s \in [ t,T ] } | {y}^{ t, \varepsilon, \zeta } |^{2k} \bigg] + \mathbb{E}_{t} \bigg[ \sup_{ s \in [ t,T ] } | {z}^{ t, \varepsilon, \zeta } |^{2k} \bigg] \bigg) \\
  \text{and} \quad
  \mathbb{E}_{t} \bigg[ \Big( \int_{t}^{ t + \varepsilon } | {\delta }^{\zeta } {\sigma }_{x} (s) {z}^{ t, \varepsilon, \zeta }_{s} |^{2} ds \Big)^{k} \bigg]
& \le K {\varepsilon }^{k} \mathbb{E}_{t} \bigg[ \sup_{ s \in [ t,T ] } | {z}^{ t, \varepsilon, \zeta } |^{2k} \bigg]
  \le K | \zeta \vee 1 |^{4k} {\varepsilon }^{3k},
\end{align*}
we can obtain $\mathbb{E}_{t} [ \sup_{ s \in [ t,T ] } | {\eta }^{ t, \varepsilon, \zeta }_{s} |^{2k} ] = o ( {\varepsilon }^{2k} )$.

\section{Stochastic Lebesgue differentiation theorem}
\label{app: Stochastic Lebesgue differentiation theorem}

In this paper, the diagonal process triplet $\{ ( {Y}^{t,u}_{t}, \mathcal{Y}^{t,u}_{t}, {Z}^{t,u}_{t} ) \}_{ t \in [ 0,T ) }$ belongs to
\begin{equation*}
\mathbb{L}_{\mathbb{F},loc}^{p} \big( 0,T; \mathbb{L}^{p} ( \Omega ) \big) \times \mathbb{L}_{\mathbb{F},loc}^{p} \big( 0,T; \mathbb{L}^{p} ( \Omega ) \big)
\times \mathbb{L}_{\mathbb{F},loc}^{p} \big( 0,T; \mathbb{L}^{p} ( \Omega ) \big)
\end{equation*}
merely for some $p \in ( 1, 1 + \delta ) \subset ( 1,2 )$, provided that \Cref{ass: uniform integrabiity condition with continuity} holds.
Therefore, we extend the stochastic Lebesgue differentiation theorem presented in \cite{Hu-Jin-Zhou-2017} for $\mathbb{L}_{\mathbb{F}}^{2} ( 0,T; \mathbb{L}^{2} ( \Omega ) )$ to fit our weaker integrability property.
As $\mathbb{L}_{\mathcal{F}_{T}}^{p} ( \Omega )$ is separable for all finite $p \ge 1$,
we can mirror the proof of \cite[Lemma 3.4]{Hu-Jin-Zhou-2017} to arrive at the following theorem.

\begin{theorem}\label{thm: stochastic Lebesgue differentiation theorem}
Suppose that $p > 1$ and $Y \in \mathbb{L}_{\mathbb{F},loc}^{p} ( 0,T; \mathbb{L}^{p} ( \Omega ) )$.
If 
\begin{equation*}
\lim_{ \varepsilon \downarrow 0 } \frac{1}{\varepsilon } \int_{t}^{ t + \varepsilon } \mathbb{E}_{t} [ {Y}_{s} ] ds = 0, \quad a.e. ~ t \in [ 0,T ), ~ \mathbb{P}\text{-}a.s.,
\end{equation*}
then ${Y}_{t} = 0$, a.e. $t \in [ 0,T )$, $\mathbb{P}$-a.s.
\end{theorem}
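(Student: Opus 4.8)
The plan is to mirror the argument of \cite[Lemma 3.4]{Hu-Jin-Zhou-2017}, replacing the Hilbert-space duality of $\mathbb{L}^{2}$ by the $\mathbb{L}^{p}$--$\mathbb{L}^{q}$ duality with $q := p/(p-1)$, and to reduce the stochastic statement to the classical (deterministic) Lebesgue differentiation theorem applied to countably many scalar functions. Write $h_{\varepsilon}(t) := \frac{1}{\varepsilon}\int_{t}^{t+\varepsilon}\mathbb{E}_{t}[Y_{s}]\,ds$, so that the hypothesis reads $h_{\varepsilon}(t)\to 0$ $\mathbb{P}$-a.s. as $\varepsilon\downarrow 0$ for a.e. $t$. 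First I would fix a rational $r\in[0,T)$ and a bounded $\xi\in\mathbb{L}^{\infty}_{\mathcal{F}_{r}}(\Omega)$; since $\xi$ is then $\mathcal{F}_{t}$-measurable for every $t\ge r$, the tower property gives $\mathbb{E}[\xi\,\mathbb{E}_{t}[Y_{s}]]=\mathbb{E}[\xi Y_{s}]$ for $s\ge t\ge r$, whence
\begin{equation*}
\mathbb{E}[\xi\, h_{\varepsilon}(t)] = \frac{1}{\varepsilon}\int_{t}^{t+\varepsilon}\mathbb{E}[\xi Y_{s}]\,ds, \quad t\ge r.
\end{equation*}
The map $s\mapsto\mathbb{E}[\xi Y_{s}]$ lies in $\mathbb{L}^{1}(0,\tau)$ for every $\tau<T$ because $Y\in\mathbb{L}^{p}_{\mathbb{F},loc}(0,T;\mathbb{L}^{p}(\Omega))$ and $\xi\in\mathbb{L}^{q}$, so the classical Lebesgue differentiation theorem yields $\mathbb{E}[\xi\,h_{\varepsilon}(t)]\to\mathbb{E}[\xi Y_{t}]$ as $\varepsilon\downarrow0$ for a.e. $t\ge r$.

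Next I would combine this deterministic limit with the almost-sure hypothesis. The crux is to show $\mathbb{E}[\xi\,h_{\varepsilon}(t)]\to0$, which together with the previous display forces $\mathbb{E}[\xi Y_{t}]=0$ for a.e. $t\ge r$. Since $h_{\varepsilon}(t)\to0$ a.s., it suffices to establish uniform integrability of $\{\xi\,h_{\varepsilon}(t)\}_{0<\varepsilon\le\delta}$, and this is exactly where the assumption $p>1$ enters. By conditional Jensen followed by Jensen's inequality for the normalized average,
\begin{equation*}
\mathbb{E}\big[|h_{\varepsilon}(t)|^{p}\big] \le \mathbb{E}\bigg[\Big(\frac{1}{\varepsilon}\int_{t}^{t+\varepsilon}|Y_{s}|\,ds\Big)^{p}\bigg] \le \frac{1}{\varepsilon}\int_{t}^{t+\varepsilon}\mathbb{E}[|Y_{s}|^{p}]\,ds,
\end{equation*}
and since $s\mapsto\mathbb{E}[|Y_{s}|^{p}]$ is locally integrable, the right-hand side stays bounded as $\varepsilon\downarrow0$ at every Lebesgue point $t$. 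Thus $\{h_{\varepsilon}(t)\}_{\varepsilon}$ is bounded in $\mathbb{L}^{p}$ with $p>1$, hence uniformly integrable; multiplying by the bounded $\xi$ preserves this, and a.s. convergence then gives $\mathbb{E}[\xi\,h_{\varepsilon}(t)]\to0$. Consequently $\mathbb{E}[\xi Y_{t}]=0$ for a.e. $t\ge r$.

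Finally I would assemble the conclusion via a countable exhaustion. Using the separability of $\mathbb{L}^{q}_{\mathcal{F}_{r}}(\Omega)$, I would choose for each rational $r$ a countable family $\{\xi_{r,i}\}_{i}$ of bounded $\mathcal{F}_{r}$-measurable variables dense in $\mathbb{L}^{q}_{\mathcal{F}_{r}}(\Omega)$. The previous step produces, for each pair $(r,i)$, a Lebesgue-null set outside of which $\mathbb{E}[\xi_{r,i}Y_{t}]=0$; taking the countable union yields a single null set $N\subset[0,T)$ such that for every $t\notin N$ one has $\mathbb{E}[\xi_{r,i}Y_{t}]=0$ for all rational $r<t$ and all $i$. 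By density this extends to $\mathbb{E}[\zeta Y_{t}]=0$ for every $\zeta\in\mathbb{L}^{q}_{\mathcal{F}_{r}}(\Omega)$ and every rational $r<t$; letting $r\uparrow t$ and invoking the continuity of the Brownian filtration ($\mathcal{F}_{t-}=\mathcal{F}_{t}$) gives $\mathbb{E}[\zeta Y_{t}]=0$ for all $\zeta\in\mathbb{L}^{q}_{\mathcal{F}_{t}}(\Omega)$. As $Y_{t}\in\mathbb{L}^{p}_{\mathcal{F}_{t}}(\Omega)$ and this pairing separates points, I conclude $Y_{t}=0$ $\mathbb{P}$-a.s. for a.e. $t\in[0,T)$. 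I expect the uniform-integrability step to be the main obstacle: it is the only place where a limit and an expectation are interchanged, and it is precisely the reason the statement requires $p>1$ rather than mere $p=1$ integrability.
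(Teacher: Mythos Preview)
Your proof is correct and follows the same overall strategy as the paper's---reduce to the classical Lebesgue differentiation theorem via duality with a countable dense family of bounded test functions, and invoke the $\mathbb{L}^{p}$ bound (with $p>1$) to obtain the uniform integrability needed to pass the expectation through the limit $h_{\varepsilon}(t)\to 0$. The one technical difference lies in the choice of test functions: the paper tests against $\eta\in\mathbb{L}^{\infty}_{\mathcal{F}_{T}}$ and then replaces $\eta$ by its running martingale $\eta_{s}=\mathbb{E}_{s}[\eta]$, which forces an extra estimate showing $\frac{1}{\varepsilon}\int_{t}^{t+\varepsilon}\mathbb{E}[Y_{s}(\eta_{s}-\eta_{t})]\,ds\to 0$ via H\"older and Doob's maximal inequality; you instead test against $\xi\in\mathbb{L}^{\infty}_{\mathcal{F}_{r}}$ with rational $r<t$, so $\xi$ is already $\mathcal{F}_{t}$-measurable and the cross term never appears, at the cost of a final passage $r\uparrow t$ using left-continuity of the Brownian filtration. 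Your route is slightly more elementary in that it avoids the martingale-increment estimate, while the paper's route avoids the filtration-continuity step; both are standard and of comparable length.
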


\begin{proof}
Applying the classic Lebesgue differentiation theorem produces the following:
\begin{equation*}
\lim_{\varepsilon \downarrow 0} \frac{1}{\varepsilon } \int_{t}^{ t + \varepsilon } \mathbb{E} [ | {Y}_{s} |^{p} ] ds = \mathbb{E} [ | {Y}_{t} |^{p} ], \quad
\lim_{\varepsilon \downarrow 0} \frac{1}{\varepsilon } \int_{t}^{ t + \varepsilon } \mathbb{E} [ {Y}_{s} \eta ] ds = \mathbb{E} [ {Y}_{t} \eta ], \quad a.e. ~ t \in [ 0,T ),
\end{equation*}
where $\eta $ is arbitrarily chosen from a countable dense subset $\mathcal{D} \subset \mathbb{L}_{\mathcal{F}_{T}}^{ \frac{p}{p-1} } ( \Omega ) \cap \mathbb{L}_{\mathcal{F}_{T}}^{\infty } ( \Omega )$.
Write ${\eta }_{s} = \mathbb{E}_{s} [ \eta ]$, which leads to $\mathbb{E} [ {Y}_{s} \eta ] = \mathbb{E} [ {Y}_{s} {\eta }_{s} ]$.
By Holder's inequality, Doob's maximal inequality and the dominated convergence theorem, one obtains
\begin{align*}
      \bigg| \lim_{\varepsilon \downarrow 0} \frac{1}{\varepsilon }
             \int_{t}^{t + \varepsilon } \mathbb{E} [ {Y}_{s} ( {\eta }_{s} - {\eta }_{t} ) ] ds \bigg|
& \le \lim_{\varepsilon \downarrow 0} \frac{1}{\varepsilon }
      \bigg| \int_{t}^{ t + \varepsilon } \mathbb{E} [ | {Y}_{s} |^{p} ] ds \bigg|^{ \frac{1}{p} }
      \bigg| \mathbb{E} \bigg[ \int_{t}^{t + \varepsilon } | {\eta }_{s} - {\eta }_{t} |^{ \frac{p}{p-1} } ds \bigg] \bigg|^{ \frac{p-1}{p} } \\
& \le \lim_{\varepsilon \downarrow 0} \bigg| \frac{1}{\varepsilon } \int_{t}^{ t + \varepsilon } \mathbb{E} [ | {Y}_{s} |^{p} ] ds \bigg|^{ \frac{1}{p} }
      \bigg| \mathbb{E} \bigg[ \sup_{ s \in [ t, t + \varepsilon ] } | {\eta }_{s} - {\eta }_{t} |^{ \frac{p}{p-1} } \bigg] \bigg|^{ \frac{p-1}{p} } \\
& \le p ( \mathbb{E} [ | {Y}_{t} |^{p} ] )^{ \frac{1}{p} }
      \bigg| \lim_{\varepsilon \downarrow 0} \mathbb{E} \big[ | {\eta }_{ t + \varepsilon } - {\eta }_{t} |^{ \frac{p}{p-1} } \big] \bigg|^{ \frac{p-1}{p} } \\
&   = 0,
\end{align*}
and hence,
\begin{equation*}
  \mathbb{E} [ {Y}_{t} \eta ]
= \lim_{\varepsilon \downarrow 0} \frac{1}{\varepsilon } \int_{t}^{ t + \varepsilon } \mathbb{E} [ {Y}_{s} {\eta }_{s} ] ds
= \lim_{\varepsilon \downarrow 0} \frac{1}{\varepsilon } \int_{t}^{ t + \varepsilon } \mathbb{E} [ {Y}_{s} {\eta }_{t} ] ds
= \lim_{\varepsilon \downarrow 0} \mathbb{E} \bigg[ \frac{ {\eta }_{t} }{\varepsilon } \int_{t}^{ t + \varepsilon } \mathbb{E}_{t} [ {Y}_{s} ] ds \bigg].
\end{equation*}
Since applying H\"older's inequality and Doob's maximal inequality yields
\begin{align*}
\mathbb{E} \bigg[ \bigg| \frac{1}{\varepsilon } \int_{t}^{ t + \varepsilon } \mathbb{E}_{t} [ {Y}_{s} ] ds \bigg|^{p} \bigg]
\le \frac{1}{\varepsilon } \int_{t}^{ t + \varepsilon } \mathbb{E} \big[ | \mathbb{E}_{t} [ {Y}_{s} ] |^{p} \big] ds
\le \Big( \frac{p}{p-1} \Big)^{p} \frac{1}{\varepsilon } \int_{t}^{ t + \varepsilon } \mathbb{E} [ | {Y}_{s} |^{p} ] ds
\to \Big( \frac{p}{p-1} \Big)^{p} \mathbb{E} [ | {Y}_{t} |^{p} ]
\end{align*}
as $\varepsilon \downarrow 0$, 
one can conclude that there exists a sufficiently small ${\delta }_{t} > 0$ such that $\frac{1}{\varepsilon } \int_{t}^{ t + \varepsilon } \mathbb{E}_{t} [ {Y}_{s} ] ds$ is uniformly integrable in $\varepsilon \in ( 0, {\delta }_{t} )$,
which implies that
\begin{align*}
  \lim_{\varepsilon \downarrow 0} \bigg| \mathbb{E} \bigg[ \frac{ {\eta }_{t} }{\varepsilon } \int_{t}^{ t + \varepsilon } \mathbb{E}_{t} [ {Y}_{s} ] ds \bigg] \bigg|
& \le ( \esssup | \eta | ) \lim_{\varepsilon \downarrow 0} \mathbb{E} \bigg[ \bigg| \frac{1}{\varepsilon } \int_{t}^{ t + \varepsilon } \mathbb{E}_{t} [ {Y}_{s} ] ds \bigg| \bigg] \\
& = ( \esssup | \eta | ) \mathbb{E} \bigg[ \lim_{\varepsilon \downarrow 0} \bigg| \frac{1}{\varepsilon } \int_{t}^{ t + \varepsilon } \mathbb{E}_{t} [ {Y}_{s} ] ds \bigg| \bigg] 
  = 0.
\end{align*}
In summary, for a.e. $t \in [ 0,T )$, we have that $\mathbb{E} [ {Y}_{t} \eta ] = 0$ with an arbitrarily chosen $\eta \in \mathcal{D}$. 
Therefore, ${Y}_{t} = 0$ $\mathbb{P}$-a.s. for a.e. $t \in [ 0,T )$. 
\end{proof}

Consider the linear controlled SDE \cref{eq: linear controlled SDE :eq}.
Based on ${Z}^{ t, \bar{u} } \in {C}_{\mathbb{F}} ( 0,T; \mathbb{L}^{ 1 + \delta } ( \Omega ) )$ and the dominated convergence theorem, 
we know that $\mathbb{E}_{t} [ | {Z}^{ t, \bar{u} }_{s} - {Z}^{ t, \bar{u} }_{t} | ]$ is continuous in $s$ and vanishes at $s = t$, implying that
\begin{equation*}
\mathbb{E}_{t} \bigg[ \int_{t}^{ t + \varepsilon } | {Z}^{ t, \bar{u} }_{s} - {Z}^{ t, \bar{u} }_{t} | | {D}_{s} |^{2} ds \bigg]
\le ( \esssup |D| )^{2} \int_{t}^{ t + \varepsilon } \mathbb{E}_{t} [ | {Z}^{ t, \bar{u} }_{s} - {Z}^{ t, \bar{u} }_{t} | ] ds
  = o ( \varepsilon ). 
\end{equation*}
Then, it follows from \cref{eq: spike perturbation :eq}, mirroring the proof of \Cref{thm: maximum principle} for handling the integrand ${Y}^{ t, \bar{u} }_{s} {B}_{s} + \mathcal{Y}^{ t, \bar{u} }_{s} {D}_{s}$, that
\begin{equation*}
  {J}^{t} ( \bar{u}^{ t, \varepsilon, \zeta } ) - {J}^{t} ( \bar{u} )
= \zeta \mathbb{E}_{t} \bigg[ \int_{t}^{ t + \varepsilon } ( {Y}^{ s, \bar{u} }_{s} {B}_{s} + \mathcal{Y}^{ s, \bar{u} }_{s} {D}_{s} ) ds \bigg]
+ \frac{1}{2} | \zeta |^{2} {Z}^{ t, \bar{u} }_{t} \int_{t}^{ t + \varepsilon } | {D}_{s} |^{2} ds
+ o ( \varepsilon ).
\end{equation*}
Notably, $\bar{u} \in \mathcal{U}$ is an ONEC if
\begin{equation}\label{eq: equilibrium condition for linear control problems :eq}
{Y}^{ t, \bar{u} }_{t} {B}_{t} + \mathcal{Y}^{ t, \bar{u} }_{t} {D}_{t} = 0, \quad {Z}^{ t, \bar{u} }_{t} \le 0, \quad a.e. ~ t \in [ 0,T ), ~ \mathbb{P}\text{-}a.s.
\end{equation}
Provided that $\bar{u} \in \mathcal{U}$ is an ONEC, in view of the arbitrariness of $\zeta \in \mathbb{R}$, one obtains ${Z}^{ t, \bar{u} }_{t} \le 0$ and
\begin{equation*}
\lim_{\varepsilon \downarrow 0} \frac{1}{\varepsilon }\int_{t}^{ t + \varepsilon } \mathbb{E}_{t} [ {Y}^{ s, \bar{u} }_{s} {B}_{s} + \mathcal{Y}^{ s, \bar{u} }_{s} {D}_{s} ] ds = 0, \quad a.e. ~ t \in [ 0,T ), ~ \mathbb{P}\text{-}a.s.
\end{equation*}
Applying \Cref{lem: integrabiity and uniqueness of diagonal process} and the stochastic Lebesgue differentiation theorem immediately yields 
${Y}^{ t, \bar{u} }_{t} {B}_{t} + \mathcal{Y}^{ t, \bar{u} }_{t} {D}_{t} = 0$, a.e. $t \in [ 0,T )$, $\mathbb{P}$-a.s.
That is, the necessity of the equilibrium condition \cref{eq: equilibrium condition for linear control problems :eq} holds.

\bibliographystyle{apacite}
\bibliography{references}
\end{document}